\newtheorem{prop}{Proposition}[section]
\newtheorem{coro}[prop]{Corollary}
\newtheorem{thm}[prop]{Theorem}
\newtheorem{lemma}[prop]{Lemma}
\newcommand{\PSL}{\mathrm{PSL}}
\begin{document}

\title{String C-group representations of transitive Groups: a case study with degree $11$}

\author[M. E. Fernandes]{Maria Elisa Fernandes}
\address{Maria Elisa Fernandes,
Center for Research and Development in Mathematics and Applications, Department of Mathematics, University of Aveiro, Portugal
}
\email{maria.elisa@ua.pt}

\author[Claudio Piedade]{Claudio Alexandre Piedade}
\address{Claudio Alexandre Piedade, Centro de Matemática da Universidade do Porto
}
\email{claudio.piedade@fc.up.pt}

\author[Olivia Reade]{Olivia Reade}
\address{Olivia Reade,
Open University, Milton Keynes, MK7 6AA, U.K.
}
\email{olivia.reade@open.ac.uk}

\date{}
\maketitle


\begin{abstract}
In this paper we give a non-computer-assisted proof  of the following result: if $G$ is an even transitive group of degree $11$ and has a  string C-group representation with rank $r\in\{4,5\}$ then $G\cong\PSL_2(11)$. Moreover this string C-group is 
the group of automorphisms of the rank $4$ polytope known as the $11$-cell. 

The insights gained from this case study include techniques and observations concerning permutation representation graphs of string C-groups. The foundational lemmas yield a natural and intuitive understanding of these groups. These and similar approaches can be replicated and are applicable to the study of other transitive groups.

\end{abstract}


\noindent \textbf{Keywords:} Abstract Regular Polytopes; String C-Groups; Symmetric Groups; Alternating Groups; Permutation Groups.

\noindent \textbf{2000 Math Subj. Class:} 52B11, 20D06.


\section{Introduction}\label{intrud}

 It is well-known that abstract regular polytopes are in one to one correspondence with string C-groups \cite{arp}. In this day and age, and given the right circumstances in terms of access to sufficiently powerful computing technology, it is possible to create, by computer, classifications of abstract regular polytopes for any given rank and ``small-enough'' group. In contrast, this paper presents a detailed exposition of a variety of computer-free methods by which one may approach such a problem. The example on which we focus is even  permutation groups of degree $11$, and in this way we obtain a classification of such abstract regular polytopes for rank 4 or 5. This provides an illustrative demonstration of a methodology for classifying string C-groups, and it establishes a pathway for tackling unsolved open-problems such as the classification of high-rank string C-groups for alternating groups of arbitrary degree.

The ``Aveiro theorem'' states that the maximal rank of an abstract regular polytope with alternating group of degree $n$ as its automorphism group is $\lfloor \frac{n-1}{2}\rfloor$ when $n\geq 12$ \cite{2017CFLM}. 
For the  alternating groups of degrees $5$, $9$, $10$ and $11$,  the maximal ranks are $3$, $4$, $5$ and $6$, respectively.  The remaining alternating groups are not the automorphism groups of any such polytope. In \cite{2019fl} it was proved that there exists an abstract regular polytope for each rank 
$r\in\{3,\ldots, \lfloor \frac{n-1}{2}\rfloor\}$ when $n\geq 12$. The set of all possible ranks of abstract regular polytopes for alternating groups with a degree different from $11$ is either empty or an interval, as shown in Table~\ref{Rankset}.

\begin{small}
\begin{table}
\begin{center}
\begin{tabular}{||c|c||}
\hline
Group&Set of ranks\\[2pt]
\hline
$A_5$&\{3\}\\[5pt]
$A_6$&$\emptyset$\\[5pt]
$A_7$&$\emptyset$\\[5pt]
$A_8$&$\emptyset$\\[5pt]
$A_9$&\{3,4\}\\[5pt]
$A_{10}$&\{3,4,5\}\\[5pt]
$A_{11}$&\{3,6\}\\[5pt]
$A_n$, $n\geq 12$&$ \{3,\ldots,  \lfloor (n-1)/2 \rfloor\}$\\[5pt]
\hline
\end{tabular}
\end{center}
\caption{The set of possible ranks of abstract regular polytopes for  $A_n$ for each $n\geq 5$}
\label{Rankset}
\end{table}
\end{small}
The alternating group $A_5$ is the first alternating group that is the group of automorphisms  of a regular polytope, namely there are, up to duality, exactly two abstract regular polytopes for $A_5$, the hemi-icosahedron and the hemi-great dodecahedron.
In his doctoral thesis Conder proved that all but finitely many alternating groups are the automorphism group of a regular map of type $\{3,m\}$ with $m>6$ (this result can also be found in \cite{1980MC,1981MC}). 
As regular maps for alternating groups are precisely  abstract regular polyhedra \cite[Corollary 4.2]{CO},  this means that the number $3$ belongs to each set of ranks of Table~\ref{Rankset},  the exceptions being $n=3,\,4,\,6,\,7$ or $8$.
The lists of all abstract regular polytopes for alternating groups up to degree $9$ are available in \cite{LVatlas}. In \cite{flm} the authors give  permutation representation graphs of all abstract regular polytopes for $A_9$ and $A_{10}$ having ranks $r\in\{4,5\}$,  and some examples of rank $6$ abstract regular polytopes for the group $A_{11}$. In  \cite{flm2}, their computations revealed the non-existence of abstract regular polytopes of ranks $4$ and $5$ for $A_{11}$.  
In 2018, Meynaert in his master's thesis \cite{MT} gave a complete classification of the representations of $A_{11}$ as a string group generated by an independent set of involutions with rank $4$ or $5$. 
Meynaert used permutation representation graphs in his classification approach, but he did not explore the potential of fracture graphs in his work.

During a problem session in the 2022 Edition of the Symmetries in Graphs, Maps, and Polytopes Workshop, $A_{11}$ was again highlighted as an interesting case of study because it is the unique known example of a group whose set of ranks is not an interval. 

 The $11$-cell is a rank $4$ polytope discovered by Coxeter and Gr\"{u}nbaum in the 80's, and is the only known abstract regular polytope with rank $r\in\{4,5\}$ having an even transitive group of degree $11$ as its automorphism group, namely the $\PSL_2(11)$. 
The group $\PSL_2(11)$ is the unique transitive even group of degree $11$ which is the automorphism group of an abstract
regular polytope having rank $4$ or $5$. Moreover the only rank $4$ polytope for  $\PSL_2(11)$ is the $11$-cell, which is self-dual, and there is no abstract regular polytopes of  rank $5$ for $\PSL_2(11)$.

Our approach to show this result uses the concept of fracture graphs, as first introduced in \cite{2023CFL}, which provides a method for tackling the problem, dividing it into three distinct cases: absence of a fracture graph, presence of a split, and the existence of a 2-fracture graph. This method gives a way to determine string group generated by involutions representations of a transitive group \cite{2023CFL,extension}, such as the illustrative example of even groups of degree $11$.
 A string group generated by involutions may not be a string C-group, then it is necessary to test whether the intersection property is satisfied.
For groups of degree $11$, this evaluation is straightforward using computer-based methods. However, in contrast to a simple ``yes'' or ``no''  outcome, our approach provides a more profound understanding by elucidating the reasons behind the failure.

In the first four sections we give the tools that will be used in this classification but that can also be used in a more general setting.
\begin{itemize}
\item Section~\ref{2}: String C-groups.
\item Section~\ref{3}: Permutation representation graphs.
\item Section~\ref{4}: Fracture graphs.
\item Section~\ref{5}: Conditions leading to the failure of the intersection property.
\end{itemize}

In the following remaining sections, we show how the tools described above can be used on our example, in which we assume that $\mathcal{G}$ is any permutation representation graph of an even transitive string C-group of degree $11$. 
We start by dividing into the cases where $\mathcal{G}$ has a fracture graph (with a split or without a split), and after we give a classification that shows what we have claimed above. 
\begin{itemize}
\item Section~\ref{even11split}: When $\mathcal{G}$ has a fracture graph with a split.
\item Section~\ref{2fracture11}: When $\mathcal{G}$ has a $2$-fracture graph.
\item Section~\ref{final}: A classification of even transitive string C-groups of degree $11$.
\end{itemize}

Our results rely on the atlas of finite groups and on classifications of regular polyhedra for $\PSL_2(11)$ which are available and well known among the researchers working on abstract polytopes and maps. 

\section{String C-groups}~\label{2}

A group $G$ is the automorphism group of an abstract regular polytope of rank $r$ if and only if it has a \emph{string C-group representation}  $\Gamma=(G, \{\rho_0,\ldots,\rho_{r-1}\})$ such that:
\begin{enumerate}
\item[(1)] $G=\langle \rho_0,\ldots, \rho_{r-1}\rangle$;
\item[(2)]  $\{\rho_0,\ldots,\rho_{r-1}\}$ is an ordered set of involutions;
\item[(3)] $\forall i,j\in\{0,\ldots, r-1\}, \;|i-j|>1\Rightarrow (\rho_i\rho_j)^2=1$ (commuting property);
\item[(4)] $\forall J, K \subseteq \{0,\ldots,r-1\}, \langle \rho_j \mid j \in J\rangle \cap \langle \rho_k \mid k \in K\rangle = \langle \rho_j \mid j \in J\cap K\rangle$.
\end{enumerate}
The sequence $\{p_1,\,\ldots,\,p_{r-1}\}$ where $p_i$ is the order of $\rho_{i-1}\rho_i$ is  the  \emph{(Schl\"afli) type} of $\Gamma$. 
A representation $\Gamma=(G, \{\rho_0,\ldots,\rho_{r-1}\})$  that satisfies (1), (2) and (3) is called a \emph{string group generated by involutions} or, for short, a sggi. The \emph{dual} of a sggi is obtained by reversing the sequence of generators.

Let us consider the following notation.
\begin{eqnarray*}
\Gamma_{i_1,\ldots,i_k} &:=&(G_{i_1,\ldots,i_k}, \{\rho_j : j \notin \{i_1,\ldots,i_k\}\});\\
\Gamma_{\{i_1,\ldots,i_k\}} &:=&(G_{\{i_1,\ldots,i_k\}}, \{\rho_j : j \in \{i_1,\ldots,i_k\}\});\\
\Gamma_{<i} &:=& (G_{<i},\{ \rho_0,\ldots, \rho_{i-1}\})\qquad (i\neq 0);\\
\Gamma_{>i} &:=& (G_{>i},\{ \rho_{i+1},\ldots, \rho_{r-1}\})\qquad (i\neq r-1).
\end{eqnarray*}
The {\em maximal parabolic subgroups} of $\Gamma$ are the subgroups $G_i$ with $i\in \{0,\ldots,r-1\}$.

The following result shows that when $\Gamma_0$ and $\Gamma_{r-1}$ are string C-groups, the intersection property for $\Gamma$ is verified by checking only one condition.

\begin{prop}\cite[Proposition 2E16]{arp}\label{arp1}
Let $\Gamma=(G, S)$ be a sggi with $S:=\{\rho_0,\ldots,\rho_{r-1}\}$.
Suppose that $\Gamma_0$ and $\Gamma_{r-1}$ are string C-groups. If $G_0\cap G_{r-1} \cong G_{0,r-1}$, then $\Gamma$ is a string C-group.
\end{prop}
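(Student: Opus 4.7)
The plan is to verify the full intersection property~(4) for $\Gamma$ by a case analysis on how the endpoints $0$ and $r-1$ distribute across the two subsets. Writing $H_K := \langle \rho_k \mid k \in K\rangle$ for $K \subseteq \{0,\ldots,r-1\}$, it suffices to show $H_I \cap H_J \subseteq H_{I \cap J}$ for every pair $I, J$, since the reverse inclusion is automatic. The strategy is to reduce each case to one of three intersection properties we already possess: that of $\Gamma_0$, that of $\Gamma_{r-1}$, and that of the doubly-parabolic $G_{0,r-1}$, which is itself a string C-group because it is a maximal parabolic of each of $\Gamma_0$ and $\Gamma_{r-1}$.

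First I would handle the easy cases. If $0 \notin I \cup J$, then both $H_I$ and $H_J$ are contained in $G_0$, so the intersection property of $\Gamma_0$ gives $H_I \cap H_J = H_{I \cap J}$; the case $r-1 \notin I \cup J$ is symmetric, handled by $\Gamma_{r-1}$. After these reductions, and swapping $I$ and $J$ if necessary, I may assume $0 \in I$ and $r-1 \in J$.

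The core case is the sub-case with additionally $0 \notin J$ and $r-1 \notin I$. Let $x \in H_I \cap H_J$. Since $r-1 \notin I$, one has $x \in H_I \leq G_{r-1}$; since $0 \notin J$, one has $x \in H_J \leq G_0$. By the hypothesis, $x \in G_0 \cap G_{r-1} = G_{0,r-1} = H_{\{1,\ldots,r-2\}}$. Applying the intersection property of $\Gamma_{r-1}$ to the pair $(I, \{1,\ldots,r-2\})$ yields $x \in H_{I \setminus \{0\}}$; symmetrically, the intersection property of $\Gamma_0$ gives $x \in H_{J \setminus \{r-1\}}$. Both $H_{I \setminus \{0\}}$ and $H_{J \setminus \{r-1\}}$ lie in $G_{0,r-1}$, so invoking its intersection property delivers $x \in H_{(I \setminus \{0\}) \cap (J \setminus \{r-1\})} = H_{I \cap J}$, where the final equality uses our extra assumptions $0 \notin J$ and $r-1 \notin I$.

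The remaining ``mixed'' sub-cases have $0$ or $r-1$ (or both) in $I \cap J$; these are comparatively mild because the shared endpoint generator already belongs to $H_{I \cap J}$, and the argument above can be rerun after absorbing it, formally by inducting on $|\{0, r-1\} \cap I \cap J|$ and exploiting that $\rho_0$ commutes with every $\rho_i$ for $i \geq 2$ (dually for $\rho_{r-1}$ when $r \geq 3$). The main obstacle I anticipate is bookkeeping across these mixed sub-cases rather than any genuinely new idea: the content of the proposition is concentrated in the core case, where the single external hypothesis $G_0 \cap G_{r-1} = G_{0,r-1}$ is precisely what bridges the two inherited intersection properties.
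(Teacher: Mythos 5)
The paper does not prove this proposition; it imports it from McMullen--Schulte, so your attempt has to stand on its own. Your reduction of the cases $0\notin I\cup J$ and $r-1\notin I\cup J$ to the intersection properties of $\Gamma_0$ and $\Gamma_{r-1}$, and your core case ($0\in I\setminus J$, $r-1\in J\setminus I$), are correct and complete; that core case is indeed exactly where the hypothesis $G_0\cap G_{r-1}=G_{0,r-1}$ is spent. The gap is in the remaining cases. First, the normal form ``$0\in I$ and $r-1\in J$ after swapping'' does not cover the configuration $0,r-1\in I$, $0,r-1\notin J$. More importantly, the proposed treatment of the mixed cases --- ``absorbing'' the shared endpoint generator and inducting on $|\{0,r-1\}\cap I\cap J|\le 2$ --- does not work as described: $\rho_0$ is neither central nor normal in $H_I$ when $1\in I$ (it need not commute with $\rho_1$), so it cannot simply be factored out of an element of $H_I\cap H_J$, and the ``induction'' is really a three-way case split whose step is the very thing that needs proving. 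What actually closes these cases is the observation that any $K\ne\{0,\ldots,r-1\}$ containing both $0$ and $r-1$ splits as $K=P\sqcup Q$ with $0\in P$, $r-1\in Q$ and every index of $P$ at distance at least two from every index of $Q$, so that $H_K=H_PH_Q$ with $[H_P,H_Q]=1$, $H_P\le G_{r-1}$ and $H_Q\le G_0$; writing the element in both factored forms, pushing the discrepancy $u'^{-1}u=v'v^{-1}$ into $G_0\cap G_{r-1}=G_{0,r-1}$, and applying the three inherited intersection properties then finishes. That is a genuine argument --- in the case $0,r-1\in I\cap J$ it is the longest part of the proof --- not bookkeeping.

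For contrast, the cited source avoids this case analysis entirely: Propositions 2E12/2E13 of McMullen--Schulte show, using the commuting property, that once $\Gamma_0$ is a string C-group the full intersection property already follows from the single family of conditions $G_0\cap\langle\rho_0,\ldots,\rho_j\rangle=\langle\rho_1,\ldots,\rho_j\rangle$ for $0\le j\le r-1$, i.e.\ a reduction to \emph{intervals} of consecutive generators; each such condition is then a one-line consequence of $G_0\cap G_{r-1}=G_{0,r-1}$ together with the intersection property of $\Gamma_{r-1}$. Your subset-by-subset analysis is in effect re-deriving that interval reduction by hand, and the portion you left implicit is precisely where the string structure has to do the real work.
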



\subsection{Sesqui-extensions}

The term sesqui-extension was first introduced in \cite{flm}. Let us recall its meaning.
Let $\Gamma=(G, \{\rho_0,\ldots,\rho_{r-1}\})$ be a sggi, and let $\tau$ be an involution in a supergroup of $G$ such that $\tau \not \in G$ and $\tau$ centralizes $G$.  
For a fixed $k$, we define the sggi $\Gamma^*=(G^*, \{ \rho_i \tau^{\eta_i}\mid i\in \{0,\,\ldots,\,r-1\} )$ where $\eta_i = 1$ if $i=k$ and $0$ otherwise, the {\it sesqui-extension} of $\Gamma $ with respect to $\rho_k$ and $\tau$.

\begin{prop} \label{sesqui0}\cite[Proposition 3.3]{flm}
If $\Gamma$ is a string C-group, and $\Gamma^*$ is a sesqui-extension of $\Gamma$ with respect to the first generator, then
$\Gamma^*$ is a string C-group.
\end{prop}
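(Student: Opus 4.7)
The plan is to prove this by induction on the rank $r$, using Proposition~\ref{arp1} at each step to reduce the intersection property to a single parabolic intersection.

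First I would verify that $\Gamma^*$ is a sggi. Since $\tau$ centralizes $G$ and is an involution outside $G$, we have $(\rho_0\tau)^2=\rho_0^2\tau^2=1$, and for $j\geq 2$ the element $\rho_0\tau$ commutes with $\rho_j$ (the two $\tau$'s cancel), so the string commuting relations are inherited from $\Gamma$. The key structural observation is that, because $\tau$ centralizes $G$ and $\tau\notin G$, we have $\langle G,\tau\rangle=G\times\langle\tau\rangle$, so that $G^*$ naturally sits inside this direct product with $\rho_0\tau$ having coordinates $(\rho_0,\tau)$ and each $\rho_i$ ($i\geq 1$) sitting in $G\times\{1\}$. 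Projection onto the second factor then defines a parity homomorphism $\phi:G^*\to\langle\tau\rangle$, and $\ker\phi = G^*\cap G$ consists of those elements of $G^*$ expressible as words with an even number of $\rho_0\tau$-letters.

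For the inductive step I would apply Proposition~\ref{arp1}. The subgroup $\Gamma^*_0$, obtained by dropping $\rho_0\tau$, coincides literally with $\Gamma_0$, which is a string C-group because $\Gamma$ is. The subgroup $\Gamma^*_{r-1}$, obtained by dropping $\rho_{r-1}$, is itself the sesqui-extension of $\Gamma_{r-1}$ with respect to its first generator and the same $\tau$ (the hypotheses persist since $G_{r-1}\subseteq G$), hence is a string C-group by the induction hypothesis. The one remaining condition is $G^*_0\cap G^*_{r-1}\subseteq G^*_{0,r-1}$. Given $g$ in this intersection, $g\in G_0\subseteq G$ forces $\phi(g)=1$, so any expression of $g$ as a word in $\rho_0\tau,\rho_1,\ldots,\rho_{r-2}$ has an even number of $\rho_0\tau$-letters. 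Replacing each $\rho_0\tau$ by $\rho_0$—which, viewed inside $G\times\langle\tau\rangle$, is just first-coordinate projection and is valid precisely because the $\tau$'s cancel in pairs—exhibits $g$ as a word in $\rho_0,\rho_1,\ldots,\rho_{r-2}$, so $g\in G_{r-1}$. Hence $g\in G_0\cap G_{r-1}=G_{0,r-1}=G^*_{0,r-1}$ by the intersection property of $\Gamma$. The base case $r=2$ follows from the same parity argument: any nontrivial element of $\langle\rho_0\tau\rangle$ has nontrivial $\phi$-image and so cannot lie in $\langle\rho_1\rangle\subseteq G$.

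The main obstacle is making the parity/substitution argument rigorous, and this is precisely where the direct product decomposition $\langle G,\tau\rangle=G\times\langle\tau\rangle$ (i.e.\ the hypotheses $\tau\notin G$ and $[\tau,G]=1$) earns its keep. Once that decomposition is in place, the word-level replacement of $\rho_0\tau$ by $\rho_0$ becomes an unambiguous group-theoretic operation, and the induction is otherwise routine.
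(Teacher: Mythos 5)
Your argument is correct. Note first that the paper does not actually prove this statement: it is quoted from \cite[Proposition 3.3]{flm}, so there is no in-paper proof to compare against. Your route is a clean, self-contained one. The essential content is the decomposition $\langle G,\tau\rangle=G\times\langle\tau\rangle$ (valid because $\tau$ is a centralizing involution not in $G$) and the resulting parity homomorphism $\phi$; this is exactly the mechanism that underlies Lemma~\ref{sesqui}(a)--(b) in the paper, and the published proof in \cite{flm} uses it to compute $G^*_J$ for \emph{every} subset $J$ and verify the full intersection property directly. Your version instead localizes all the work to the single intersection $G^*_0\cap G^*_{r-1}$ via Proposition~\ref{arp1} and an induction on rank, which is shorter and arguably more transparent: $\Gamma^*_0$ is literally $\Gamma_0$, $\Gamma^*_{r-1}$ is again a sesqui-extension with respect to a first generator (the hypotheses $\tau\notin G_{r-1}$ and $[\tau,G_{r-1}]=1$ do persist, as you say), and the even-parity substitution $\rho_0\tau\mapsto\rho_0$ is rigorous precisely because it is first-coordinate projection and $\phi(g)=1$ forces $g=\pi_1(g)$. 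The only points worth making explicit in a written-up version are that $\rho_0\tau\neq 1$ (so the new generator is genuinely an involution, which follows from $\tau\notin G$) and that the base case of the induction covers rank at most $2$, both of which your parity argument already handles.
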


\begin{lemma}\cite[Lemma 5.4]{flm2}\label{sesqui} 
Let $\Gamma=(G, \{\rho_0,\ldots,\rho_{r-1}\})$ be a sggi. If $\Gamma^*=(G^*, \{ \rho_i \tau^{\eta_i}\mid i\in \{0,\,\ldots,\,r-1\} )$ where $\eta_i = 1$ if $i=k$ and $0$ otherwise, then the following hold:
\begin{enumerate}
\item $G^*\cong G$ or $G^*= G\times \langle \tau\rangle\cong G\times C_2$. 
\item If the identity element of $G$ can be written as a product of generators involving $\rho_k$ an odd number of times, then $G^*= G\times \langle \tau\rangle$.
\item If $G$ is a finite permutation group, $\tau$ and $\rho_k$ are odd permutations, and all other $\rho_i$ are even permutations, then $G^*\cong G$.
\item Whenever $\tau\notin  G^*$, $\Gamma$ is a string C-group if and only if $\Gamma^*$ is a string C-group.
\end{enumerate}
\end{lemma}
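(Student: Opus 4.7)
The plan is to exploit the ambient direct-product structure: since $\tau$ is a central involution of a supergroup of $G$ with $\tau\notin G$, one has $G\cap\langle\tau\rangle=1$, so $\langle G,\tau\rangle=G\times\langle\tau\rangle$ and therefore $G^*\leq G\times\langle\tau\rangle$. I would then study $G^*$ through the projection $\pi:G\times\langle\tau\rangle\to G$ that kills $\tau$.

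For (1), each generator $\rho_i\tau^{\eta_i}$ of $G^*$ satisfies $\pi(\rho_i\tau^{\eta_i})=\rho_i$, so $\pi(G^*)=\langle\rho_0,\ldots,\rho_{r-1}\rangle=G$. The kernel of $\pi|_{G^*}$ is $G^*\cap\langle\tau\rangle$, which is either trivial or equal to $\langle\tau\rangle$. In the first case, $\pi|_{G^*}$ is an isomorphism $G^*\cong G$. In the second case, $\tau\in G^*$ forces $\rho_k=(\rho_k\tau)\tau\in G^*$, so $G\leq G^*$ and $G^*=G\times\langle\tau\rangle$.

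For (2), suppose a word $w$ in the $\rho_i$ equals $1$ in $G$ and uses $\rho_k$ an odd number of times. Evaluating the same word with the generators $\rho_i\tau^{\eta_i}$ of $G^*$, I can use the centrality of $\tau$ to pull every copy of $\tau$ to the right, obtaining $w(\rho_0,\ldots,\rho_{r-1})\cdot\tau^{\text{odd}}=\tau$. Thus $\tau\in G^*$, and by (1) we conclude $G^*=G\times\langle\tau\rangle$. For (3), the sign homomorphism sends each generator of $G^*$ to $+1$: the $\rho_i$ with $i\neq k$ are even by hypothesis, and $\rho_k\tau$ is a product of two odd permutations, hence even. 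So $G^*$ lies in the alternating group, while $\tau$ is odd; this gives $\tau\notin G^*$ and $G^*\cong G$ via (1).

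For (4), when $\tau\notin G^*$, part (1) gives an isomorphism $\pi|_{G^*}:G^*\to G$ sending the ordered generating set of $\Gamma^*$ onto that of $\Gamma$ generator-by-generator. This isomorphism preserves every relation among the generators, including the commuting conditions $(\rho_i\rho_j)^2=1$ for $|i-j|>1$ and the full intersection property on parabolic subgroups, so $\Gamma^*$ is a string C-group if and only if $\Gamma$ is. I expect the only subtle point to be (2), because one must be careful that the word identity is indeed a relation of $G$ (not of the free monoid on the $\rho_i$) and that the generating set of $G^*$ is substituted in the correct positions; the rest is essentially bookkeeping enabled by the centralizing hypothesis on $\tau$.
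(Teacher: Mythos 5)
Your proof is correct; the paper itself gives no argument for this lemma (it is imported from \cite[Lemma 5.4]{flm2}), and your route via the splitting $\langle G,\tau\rangle=G\times\langle\tau\rangle$ (valid because $\tau\notin G$ forces $G\cap\langle\tau\rangle=1$) and the projection $\pi$ killing $\tau$ is essentially the standard proof of that result. All the key points are in place: the dichotomy on $G^*\cap\langle\tau\rangle$ for (1), centrality of $\tau$ letting you collect the $\tau$'s to deduce $\tau\in G^*$ for (2), the sign homomorphism for (3), and for (4) the fact that $\pi|_{G^*}$ is an isomorphism carrying the ordered generating set of $\Gamma^*$ onto that of $\Gamma$, hence preserving all parabolic subgroups and their intersections.
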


\section{Permutation representation graphs}~\label{3}

Suppose that $G$ is a permutation group of degree $n$ and let $\Gamma=(G, \{\rho_0,\ldots,\rho_{r-1}\})$ be a sggi. The \emph{permutation representation graph} $\mathcal{G}$ of $\Gamma$ is an $r$-edge-labelled multigraph with $n$ vertices and with an $i$-edge $\{a,\,b\}$ whenever $a\rho_i=b$ with $a\neq b$.  
The \emph{dual} of a permutation representation graph is obtained by reverting the labels of the edges according to the correspondence $(0,\ldots,r-1)\mapsto  (r-1,\ldots,0)$. 
Let $\mathcal{G}_{i_1,\ldots,i_k}$ (resp. $\mathcal{G}_{\{i_1,\ldots,i_k}\}$) denote the permutation representation graph of  $\Gamma_{i_1,\ldots,i_k}$ (resp. $\Gamma_{\{i_1,\ldots,i_k}\}$). Notice that when $\rho_i$ is a $k$-transposition (a  product of $k$ disjoint transpositions), $\mathcal{G}_{\{i\}}$ is a matching with $k$ edges.

If  $a\rho_i = a\rho_j = b$ with $a \neq b$ and $i\neq j$ then we say that the graph has a \emph{double $\{i,j\}$-edge}. Similarly, triple edges with labels $i$, $j$ and $k$ are called \emph{triple $\{i,j,k\}$-edges}. These multiple edges are represented as follows (respectively). 
$$\xymatrix@-1.6pc{*+[o][F]{a}  \ar@{=}[rr]^{\{i,j\}} && *+[o][F]{b}}\qquad \xymatrix@-1.6pc{*+[o][F]{a}  \ar@{-}[rr] \ar@{=}[rr]^{\{i,j,k\}} && *+[o][F]{b}} $$
A square with alternating labels in the set $\{i,j\}$ is called an \emph{$\{i,j\}$-square}.

A consequence of the commuting property is that, if $i$ and $j$ are nonconsecutive the connected components of $\mathcal{G}_{\{i,j\}}$ with more then two vertices are $\{i,j\}$-squares.  We also have the following lemma which is a direct consequence of the commuting property.

\begin{lemma}\label{Olivia1}
If $j$ is the label of an edge of $\mathcal{G}$ connecting a vertex of $\mathrm{ Fix}(\rho_i)$ and a vertex of its complement $\overline{\mathrm{Fix}(\rho_i)}$ then $j\in\{i-1,i+1\}$. 
\end{lemma}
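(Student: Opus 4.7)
The plan is to proceed by contradiction using the commuting property and the definition of the edge labels in the permutation representation graph. Let $a \in \mathrm{Fix}(\rho_i)$ and $b \in \overline{\mathrm{Fix}(\rho_i)}$ be the endpoints of the $j$-edge, so that $a\rho_j = b$ with $a \neq b$. I want to show $j \in \{i-1, i+1\}$, equivalently, to rule out the cases $j = i$ and $|i-j| \geq 2$.

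First, the case $j = i$ is immediate: if $j = i$ then $a\rho_i = b \neq a$, contradicting $a \in \mathrm{Fix}(\rho_i)$. So assume $|i - j| \geq 2$, which by hypothesis on $\Gamma$ being a sggi means $\rho_i$ and $\rho_j$ commute, i.e.\ $(\rho_i\rho_j)^2 = 1$ and hence $\rho_i\rho_j = \rho_j\rho_i$ (both sides being involutions squaring to the identity, or equivalently by expanding $\rho_i\rho_j\rho_i\rho_j = 1$ and multiplying by $\rho_j\rho_i$). Applying both sides to $a$, I compute
\[
b\rho_i = (a\rho_j)\rho_i = a(\rho_j\rho_i) = a(\rho_i\rho_j) = (a\rho_i)\rho_j = a\rho_j = b,
\]
where the second-to-last equality uses $a \in \mathrm{Fix}(\rho_i)$. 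This gives $b \in \mathrm{Fix}(\rho_i)$, contradicting $b \in \overline{\mathrm{Fix}(\rho_i)}$.

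There is no real obstacle here: the lemma is a direct consequence of the commuting property (3) in the definition of a sggi, together with the definition of $i$-edges in the permutation representation graph. The only thing to be careful about is the convention for group actions (right action $a\rho_j = b$) and to note that commutativity of $\rho_i$ and $\rho_j$ for $|i-j|>1$ is exactly the condition used.
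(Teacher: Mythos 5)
Your proof is correct and is exactly the argument the paper has in mind: the paper simply asserts the lemma is "a direct consequence of the commuting property," and your computation $b\rho_i = a\rho_j\rho_i = a\rho_i\rho_j = a\rho_j = b$ (plus the trivial exclusion of $j=i$) is that consequence spelled out.
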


Another consequence of the commuting property is that $\rho_0$ centralizes $G_{0,1}$, for that reason we may state the following results about the connected components of $\mathcal{G}_{0,1}$. The dual of the following lemmas also can be applied to $\mathcal{G}_{r-1,r-2}$.

\begin{lemma}\label{l0}
Let $U$ and $V$ be distinct $G_{0,1}$-orbits.
\begin{enumerate}
\item If $x\rho_0=y$ with $x,y\in U$ and $x\neq y$, then $|U|$ is even.
\item If $x\rho_0=y$ with $x\in U$ and $y\in V$, then the permutation representation subgraph of $\mathcal{G}_{0,1}$ induced by $U$ is a copy of  the one induced by $V$.
\end{enumerate}
\end{lemma}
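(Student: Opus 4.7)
The plan is to exploit the single fact, emphasised by the authors just before the lemma, that $\rho_0$ centralises $G_{0,1}$. This means $\rho_0$ acts on the vertex set as a $G_{0,1}$-equivariant involution, and in particular it permutes the $G_{0,1}$-orbits. The strategy is to extract part (1) by showing $\rho_0$ is fixed-point-free on an orbit it preserves non-trivially, and part (2) by using the same equivariance together with the commuting property to turn $\rho_0$ into a graph isomorphism between the two induced subgraphs.

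For part (1), I would fix $x\in U$ with $x\rho_0=y\in U$ and $y\neq x$, and note that $\rho_0$ maps $U$ to itself, since for any $u=gx$ with $g\in G_{0,1}$ one has $u\rho_0=(gx)\rho_0=g(x\rho_0)=gy\in U$. Thus $\rho_0|_U$ is an involution on $U$. If $\rho_0|_U$ fixed some point $z\in U$, then writing any $w\in U$ as $w=hz$ with $h\in G_{0,1}$ we would get $w\rho_0=(hz)\rho_0=h(z\rho_0)=hz=w$, so $\rho_0$ would fix every element of $U$, contradicting $x\rho_0=y\neq x$. Hence $\rho_0$ has no fixed point on $U$, and as a fixed-point-free involution it forces $|U|$ to be even.

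For part (2), assume $x\rho_0=y$ with $x\in U$, $y\in V$. The same equivariance argument ($u=gx\mapsto gy\in V$) shows that $\rho_0$ restricts to a bijection $\varphi\colon U\to V$. To promote $\varphi$ to an isomorphism of the permutation representation subgraphs induced on $U$ and $V$ in $\mathcal{G}_{0,1}$, I would use the commuting property: the edges of $\mathcal{G}_{0,1}$ carry labels in $\{2,\dots,r-1\}$, and each $\rho_i$ with $i\geq 2$ commutes with $\rho_0$. Hence for $u_1,u_2\in U$ and $i\in\{2,\dots,r-1\}$, the relation $u_1\rho_i=u_2$ gives $(u_1\rho_0)\rho_i=u_1\rho_i\rho_0=u_2\rho_0$, so an $i$-edge $\{u_1,u_2\}$ in the induced subgraph on $U$ corresponds to an $i$-edge $\{\varphi(u_1),\varphi(u_2)\}$ in the one on $V$, and conversely via $\rho_0^{-1}=\rho_0$. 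This yields the desired label-preserving graph isomorphism.

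There is no serious obstacle here; the argument is essentially just ``equivariant self-maps of a transitive set with a moved point are fixed-point-free'' combined with the definition of the string property. The only point worth stating carefully is the distinction between the action on vertices (using $\rho_0$ and $G_{0,1}$) and the edge labels surviving in $\mathcal{G}_{0,1}$ (namely $2,\dots,r-1$), since it is precisely the commuting property for those labels with $\rho_0$ that turns $\varphi$ into a graph isomorphism.
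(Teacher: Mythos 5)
Your argument is correct and is exactly the route the paper intends: the paper's proof consists of the single line that the lemma ``is a consequence of the commuting property of $\Gamma$'', and your write-up simply fills in the details of why $\rho_0$ centralising $G_{0,1}$ forces it to be fixed-point-free on a preserved orbit it moves, and to induce a label-preserving isomorphism between distinct orbits it swaps. No gaps; the only cosmetic quibble is that you write the group action on the left ($u=gx$) while the paper acts on the right ($x g$), which does not affect the argument.
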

\begin{proof}
This is a consequence of the commuting property of $\Gamma$.
\end{proof}

\begin{lemma}\label{l1}
If $G_0$ is transitive and $\rho_0$ is an even permutation then one of the following situations occurs.
\begin{enumerate}
\item $G_{0,1}$ has at least one orbit of even size.
\item $G_{0,1}$ has at least four odd orbits.
\end{enumerate}
\end{lemma}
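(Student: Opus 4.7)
My plan is to argue by contraposition: assume hypothesis (a) fails so that every $G_{0,1}$-orbit has odd size, and deduce that there must be at least four such orbits. The engine of the argument is that the commuting property forces $\rho_0$ to centralize $G_{0,1}=\langle\rho_2,\ldots,\rho_{r-1}\rangle$ (since $0$ is non-adjacent to each of $2,\ldots,r-1$), so $\rho_0$ permutes the $G_{0,1}$-orbits as a set. I would then split the orbits into those stabilised setwise by $\rho_0$ and those swapped in pairs by $\rho_0$.

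Next I would invoke the two parts of Lemma~\ref{l0}. Part (a) says that if $\rho_0$ transposes two vertices of a single orbit $U$ then $|U|$ is even; since we are assuming every orbit is odd, $\rho_0$ cannot move any vertex within its own orbit, so each $\rho_0$-stabilised orbit is in fact fixed pointwise. Part (b) says that whenever $\rho_0$ swaps two distinct orbits $U,V$, they must have the same odd size $k$, and on $U\cup V$ the permutation $\rho_0$ acts as exactly $k$ disjoint transpositions.

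The finish is a parity count. If there are $p$ pairs of orbits swapped by $\rho_0$, of sizes $k_1,\ldots,k_p$, then $\rho_0$ is a product of $k_1+\cdots+k_p$ transpositions, so its sign equals $(-1)^{k_1+\cdots+k_p}=(-1)^p$ because each $k_i$ is odd. Since $\rho_0$ is an involution (hence nontrivial), it cannot fix every orbit pointwise, so $p\geq 1$; and since $\rho_0$ is by hypothesis an even permutation, $p$ must be even, so $p\geq 2$. This produces at least $2p\geq 4$ orbits, giving (b).

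I do not expect any genuine obstacle: the whole argument is a parity bookkeeping once Lemma~\ref{l0} is in hand. The only mild subtlety worth flagging in writing it up is justifying that $\rho_0$ cannot fix every $G_{0,1}$-orbit setwise without being the identity, which is where the odd-size assumption is used together with the involution condition. The hypothesis that $G_0$ is transitive is not actually needed inside this core argument; it plays its role in the surrounding framework where the lemma will be applied, not in the parity computation itself.
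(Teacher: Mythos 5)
Your proposal is correct and follows essentially the same route as the paper: both use Lemma~\ref{l0} to show that, when every $G_{0,1}$-orbit is odd, $\rho_0$ must pair up whole orbits and act as $k$ disjoint transpositions on each pair, and then both conclude from the evenness of $\rho_0$ that a single swapped pair is impossible, forcing at least two pairs and hence four odd orbits. Your general sign computation $(-1)^{k_1+\cdots+k_p}=(-1)^p$ is a slightly tidier packaging of the paper's ``one pair makes $\rho_0$ odd'' contradiction, and your observation that transitivity of $G_0$ is not used in the core argument is consistent with the paper's own proof.
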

\begin{proof}
Suppose that all $G_{0,1}$-orbits are odd. Then, by Lemma~\ref{l0}, $\rho_0$ cannot swap a pair of vertices in the same $G_{0,1}$-orbit.
Then $\rho_0$ swaps vertices in different $G_{0,1}$-orbits pair-wisely. 
Let $O_1$ and $O_2$ be $G_{0,1}$-orbits such that  $O_1\rho_0=O_2$. If $\rho_0$ fixes the remaining points, then  $\rho_0$ is a product of $|O_1|(=|O_2|)$ transpositions, hence $\rho_0$ is odd, a contradiction.
Thus there exists another pair of (odd) orbits $O_3$ and $O_4$ such that $O_3\rho_0=O_4$, as wanted.
\end{proof}

\begin{lemma}\label{l2}
If the permutation representation subgraphs induced by each of the $G_{0,1}$-orbits are all different, then $\rho_0$ acts non-trivially  only on $G_{0,1}$-orbits of even size (fixing the odd orbits pointwisely). 

\end{lemma}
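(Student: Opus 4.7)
The plan is to derive this as a direct consequence of the two parts of Lemma~\ref{l0}, using the hypothesis to rule out the only remaining way that $\rho_0$ could act on an odd orbit.

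First I would observe that, since $\rho_0$ commutes with each $\rho_i$ for $i \geq 2$, it centralizes $G_{0,1} = \langle \rho_2,\ldots,\rho_{r-1}\rangle$, and therefore $\rho_0$ permutes the $G_{0,1}$-orbits as a set. The hypothesis that the permutation representation subgraphs induced by distinct $G_{0,1}$-orbits are pairwise non-isomorphic will then be used to show that $\rho_0$ in fact fixes every $G_{0,1}$-orbit setwise: if there were $x \in U$ and $y \in V$ with $U \neq V$ and $x\rho_0 = y$, then by Lemma~\ref{l0}(2) the induced subgraph on $V$ would be a copy of the induced subgraph on $U$, contradicting the hypothesis.

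Once we know $\rho_0$ preserves each $G_{0,1}$-orbit setwise, the conclusion for odd orbits follows from Lemma~\ref{l0}(1) by contraposition: if $U$ is an orbit of odd size on which $\rho_0$ acts non-trivially, then there exists $x \in U$ with $y := x\rho_0 \in U$ and $y \neq x$, whence $|U|$ would have to be even, contradicting the assumption that $U$ is odd. Hence $\rho_0$ fixes every point of every odd $G_{0,1}$-orbit, which is exactly the statement of the lemma.

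I do not anticipate any serious obstacle here; the proof is essentially a one-line corollary of Lemma~\ref{l0}, with the two parts of that lemma handling the two possible obstructions (moving between orbits, and moving within an odd orbit). The only point to be careful about is to explicitly note that $\rho_0$ centralizing $G_{0,1}$ is what lets us say $\rho_0$ sends $G_{0,1}$-orbits to $G_{0,1}$-orbits in the first place, since this is the structural fact on which both invocations of Lemma~\ref{l0} ultimately rest.
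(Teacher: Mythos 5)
Your proof is correct and follows essentially the same route as the paper, which disposes of the lemma as an immediate consequence of Lemma~\ref{l0}: part (2) plus the hypothesis rules out $\rho_0$ moving points between distinct $G_{0,1}$-orbits, and part (1) then forces any orbit on which $\rho_0$ acts non-trivially to have even size. If anything you are slightly more complete than the paper, which cites only part (b) of Lemma~\ref{l0}, whereas your write-up correctly records that both parts are needed.
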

\begin{proof}
This is an immediate consequence of Lemma~\ref{l0} (b).
\end{proof}

\section{Fracture graphs}~\label{4}

Suppose that all maximal parabolic subgroups of $\Gamma$ are intransitive. A \emph{fracture graph} of $\mathcal{G}$ is a subgraph of $\mathcal{G}$ having $n$ vertices and, for each $i\in\{0,\ldots, r-1\}$, one $i$-edge chosen among the $i$-edges between vertices in different $G_i$-orbits \cite{extension}.
A fracture graph of $\mathcal{G}$ thus has exactly $r$ edges.   
 
In general a sggi has multiple fracture graphs. Indeed only $S_n$ has a string C-group representation, corresponding to the simplex, having a uniquely determined fracture graph.
An $i$-edge that belongs to every fracture graph of $\mathcal{G}$ is called an \emph{$i$-split} of $\Gamma$ \cite{2023CFL}.  A split is a bridge of $\mathcal{G}$, therefore it satisties the following property.

\begin{prop}\label{path}
Any path (not containing an $i$-edge) from an $i$-split to an edge with label $l$, where $l \neq i$, contains all labels between $l$ and $i$. 
\end{prop}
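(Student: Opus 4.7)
The plan is to exploit the bridge property of the $i$-split in tandem with the commuting property (3) of a string C-group. Let $e=\{a,b\}$ be the $i$-split. Since $e$ is a bridge, $\mathcal{G}\setminus\{e\}$ splits into two components $A\ni a$ and $B\ni b$, and $e$ is the unique edge (of any label) between them. A path avoiding $i$-edges that starts at $b$ is therefore confined to $B$; by symmetry we may always work from the $b$-side.

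My first step is the base-case locality statement: for every $j$ with $|j-i|>1$, $\rho_j$ fixes $b$. Indeed, if $c:=b\rho_j\neq b$ then $c\in B$, and commutativity gives $c\rho_i = b\rho_i\rho_j = a\rho_j\in A$, so $\{c,a\rho_j\}$ is an $i$-edge running between $B$ and $A$. Either $c\neq a$ (yielding a second edge distinct from $e$) or $c=a$ forces $b\rho_j=a$ (producing a $j$-edge parallel to $e$); in either case the bridge property of $e$ is violated.

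The second step is the inductive core. For a path $b=v_0\xrightarrow{j_1}v_1\xrightarrow{j_2}\cdots\xrightarrow{j_d}v_d$ inside $B$, I would prove by induction on $d$ that, writing $S_d:=\{j_1,\ldots,j_d\}$, the set $S_d\cup\{i\}$ is a contiguous integer interval $[\alpha_d,\beta_d]$, and that $\rho_j$ fixes $v_d$ for every $j\notin[\alpha_d-1,\beta_d+1]$. The key computation is that any such $j$ satisfies $|j-j_k|>1$ for every $k$ and $|j-i|>1$, so $\rho_j$ commutes with $\rho_i$ and with every $\rho_{j_k}$. Combined with $b\rho_j=b$ from the base case, this gives $v_d\rho_j = b\rho_{j_1}\cdots\rho_{j_d}\rho_j = b\rho_j\rho_{j_1}\cdots\rho_{j_d} = v_d$. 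Consequently the next label $j_{d+1}$ must lie in $[\alpha_d-1,\beta_d+1]$, so $S_{d+1}\cup\{i\}$ is again a contiguous interval, extended by at most one at either end.

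The proposition then follows by bookkeeping. If the path terminates with an $l$-edge incident to $v_d$, then $\rho_l$ moves $v_d$, so $l\in[\alpha_d-1,\beta_d+1]\setminus\{i\}$. Assuming $l>i$ (the case $l<i$ is symmetric), we have $l-1\leq\beta_d$, and since $[i+1,\beta_d]\subseteq S_d$, every integer strictly between $i$ and $l$ appears among the path labels, as required. The main point of care is the inductive step, where contiguity of $S_d\cup\{i\}$ and the fixing-outside-the-interval property must be propagated simultaneously — but both follow from the very same commuting-with-bridge reasoning as the base case, so there is no serious obstacle once the inductive invariant is set up correctly.
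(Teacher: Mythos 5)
Your argument is correct, but it follows a genuinely different route from the paper: the authors dispose of this proposition in one line by citing Proposition 5.18 of \cite{2017CFLM}, whereas you supply a self-contained proof built only from the two facts actually available in the text, namely that a split is a bridge and that nonconsecutive generators commute. Your base case (if $|j-i|>1$ then $\rho_j$ fixes $b$, since otherwise commuting $\rho_j$ past $\rho_i$ manufactures either a second edge joining the two components obtained by deleting the split edge, or an edge parallel to the split, both contradicting the bridge property) and your inductive invariant (the labels of an initial segment of the walk together with $i$ form a contiguous interval, and every generator whose label lies outside the unit neighbourhood of that interval fixes the current vertex, via $v_d\rho_j=b\rho_j\rho_{j_1}\cdots\rho_{j_d}=v_d$) are exactly the right mechanism, and the final bookkeeping correctly extracts that every label strictly between $i$ and $l$ occurs on the walk. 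Two small polish points: when $c=b\rho_j\neq b$ and $c\neq a$, you should say explicitly that $c$ and $a\rho_j$ lie in different components and hence are distinct, so that $\{c,a\rho_j\}$ genuinely is a second $i$-edge crossing the cut; and it is worth noting that your induction works for arbitrary walks, not just simple paths, which is the generality the proposition is used in. What your version buys is independence from the external reference for a fact the paper invokes constantly (for instance throughout Propositions~\ref{O2}--\ref{O5}); what the citation buys is brevity.
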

\begin{proof}
This is a consequence of Proposition 5.18 of \cite{2017CFLM}.
\end{proof}

\begin{lemma}\label{claudio}
Let $\Gamma:=(G, \{ \rho_0, \ldots, \rho_{n-1}\})$ be a sggi with a permutation representation graph $\mathcal{G}$ having  a fracture graph. 
If $\rho_i$ is a $2$-transposition  and $\mathcal{G}$ has a double $\{i,j\}$-edge,  for some $i,j\in\{0,\ldots, n-1\}$, then $\mathcal{G}$ has an $i$-split.
\end{lemma}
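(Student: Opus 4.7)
The plan is to observe that the double $\{i,j\}$-edge rules itself out as a candidate for the $i$-edge of any fracture graph, forcing the other $i$-edge to be the unique such candidate, and hence an $i$-split.

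More concretely, since $\rho_i$ is a $2$-transposition, the graph $\mathcal{G}_{\{i\}}$ consists of exactly two $i$-edges; call them $e_1=\{a,b\}$ and $e_2=\{c,d\}$, and assume $e_1$ is the double $\{i,j\}$-edge, so that $a\rho_j=b$ as well. First I would note that $\rho_j$ lies in $G_i=\langle \rho_k : k\neq i\rangle$, so $a$ and $b$ are in the same $G_i$-orbit. Consequently $e_1$ is not an edge between distinct $G_i$-orbits, and therefore cannot be the $i$-edge of any fracture graph.

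Next, since $\mathcal{G}$ does admit a fracture graph (by hypothesis), there must exist at least one $i$-edge between distinct $G_i$-orbits; with only two $i$-edges available and $e_1$ excluded, this forces $e_2$ itself to join distinct $G_i$-orbits. Moreover $e_2$ is then the only possible choice of $i$-edge in any fracture graph of $\mathcal{G}$, so by definition $e_2$ is an $i$-split of $\Gamma$.

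The argument is short and essentially combinatorial; I do not anticipate a real obstacle. The only subtlety worth double-checking is the first step, namely that $\rho_j\in G_i$ legitimately places the endpoints of the double edge in a common $G_i$-orbit — this uses only that $j\neq i$, which is implicit in the notion of a double $\{i,j\}$-edge. After that, the conclusion follows directly from the definitions of fracture graph and split recalled in Section~\ref{4}.
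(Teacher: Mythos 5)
Your argument is correct and is exactly the reasoning the paper leaves implicit: its proof of this lemma is the single sentence that it is ``an immediate consequence of a definition of a split,'' and your three steps (the double edge's endpoints lie in one $G_i$-orbit via $\rho_j\in G_i$, so the only other $i$-edge must join distinct $G_i$-orbits and hence lies in every fracture graph) are precisely the unpacking of that remark. No gap; your version is simply more explicit than the paper's.
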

\begin{proof}
This is an immediate consequence of a definition of a split.
\end{proof}

Suppose that $\mathcal{G}$ admits a fracture graph.
If in addition $\mathcal{G}$ has no splits then, for every $i\in\{0,\ldots, r-1\}$, there are at least two $i$-edges between vertices in different $G_i$-orbits. In this case $\mathcal{G}$ admits a \emph{2-fracture graph}, that is a subgraph of $\mathcal{G}$ with $n$ vertices and with exactly two $i$-edges between vertices in different $G_i$-orbits, for each $i\in\{0,\ldots, r-1\}$ \cite{2017CFLM}. A 2-fracture graph of $\mathcal{G}$  thus has exactly $2r$ edges.

\section{Conditions leading to the failure of the intersection property}\label{5}

 In this section, we give sufficient conditions for the intersection property to fail. 
{ \st Relying on these, we now prove that all sggi's given in the appendix are not string C-groups. These sggi's are a result of the case-by-case analysis.}

\begin{prop}\cite[Proposition 6.1]{2023CFL}\label{SNC} 
\begin{enumerate}
\item[(I)] Let $G$ be a primitive permutation group containing a $3$-cycle.
Then $G$ is the alternating or symmetric group.
\item[(II)] Let $G$ be an intransitive permutation group containing a 
$3$-cycle $\alpha$. Let $X$ be the orbit of one of the points of $\alpha$,
and $H$ the group induced on $X$ by $G$. If $A_X\le H$, then $A_X\le G$.
\end{enumerate}
\end{prop}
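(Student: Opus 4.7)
The proposition consists of two classical-flavor statements, which I would handle separately.

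For part (I), my approach is essentially Jordan's classical argument. I would start by taking $N$, the normal closure of the given 3-cycle $\alpha$ in $G$. Since $N$ is a nontrivial normal subgroup of a primitive group, $N$ is transitive on $\Omega$, and since conjugation preserves cycle structure, $N$ is generated by 3-cycles, whence $N \le A_n$. To force $N = A_n$, I define the binary relation $\sim$ on $\Omega$ by $x \sim y$ iff $x = y$ or some 3-cycle of $N$ has support containing both $x$ and $y$. A short computation using products of two overlapping 3-cycles (sharing one or two points) shows that $\sim$ is a $G$-invariant equivalence relation. Because $\alpha$ exhibits a non-trivial equivalence class, primitivity forces $\sim$ to be the universal relation, so $N$ contains a 3-cycle moving any prescribed pair of points. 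This gives $N \supseteq A_n$, as desired. The main technical step is the verification of transitivity of $\sim$, which is the standard overlap analysis.

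For part (II), the approach is more direct via normal closures. First I note that all three points of $\alpha$ lie in $X$, since $\alpha \in G$ preserves the orbit decomposition and the support of $\alpha$ is $\alpha$-invariant. I would then set $N := \langle \alpha^G \rangle$. Every $G$-conjugate of $\alpha$ is again a 3-cycle supported inside $X$ (because $G$ preserves $X$ setwise), so every element of $N$ fixes $\Omega \setminus X$ pointwise, and restriction to $X$ embeds $N$ as a subgroup of $A_X$. The image $\bar N$ is a normal subgroup of $H$ containing a 3-cycle. Using the hypothesis $A_X \le H$, I would argue that $\bar N \supseteq A_X$: for $|X| \ge 5$ this follows from the simplicity of $A_X$ (the normal closure in $H$ of a nontrivial element of $A_X$ intersects $A_X$ in a nontrivial normal subgroup of $A_X$, hence equals $A_X$), and for $|X| \in \{3, 4\}$ one checks directly that no proper normal subgroup of $A_X$ contains a 3-cycle. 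Hence $\bar N = A_X$, and since $N \to \bar N$ is an isomorphism, the subgroup $N \le G$ equals the copy of $A_X$ fixing $\Omega \setminus X$ pointwise, proving $A_X \le G$.

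The main obstacle for part (I) is the careful verification that $\sim$ is transitive via the overlap computation, a point where cycle arithmetic has to be done explicitly; part (II) is essentially bookkeeping once the normal closure is set up, with the only subtlety being the separate treatment of the small cases $|X| = 3, 4$ where $A_X$ is not simple.
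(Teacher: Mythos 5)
Your proposal is correct, but note that the paper does not prove this statement at all: Proposition~\ref{SNC} is imported verbatim as a citation of Proposition 6.1 of \cite{2023CFL}, so there is no internal argument to compare against. What you have written is a self-contained classical proof. Part (I) is Jordan's argument: the normal closure $N$ of the $3$-cycle lies in $A_n$, the relation ``some $3$-cycle of $N$ moves both points'' is a $G$-congruence (your overlap analysis is right --- two $3$-cycles sharing two points generate $A_4$ on their joint support, and for a single shared point a conjugate $\tau^{-1}\sigma\tau$ or $\tau\sigma\tau^{-1}$ produces the required $3$-cycle), and primitivity makes it universal. The only step you gloss is the final implication ``every pair of points lies in the support of some $3$-cycle of $N$, hence $N\supseteq A_n$'': this needs a short support-growing induction, e.g.\ if $A_T\le N$ with $|T|\ge 3$ and $T\ne\Omega$, a $3$-cycle of $N$ meeting both $T$ and its complement yields $A_{T\cup\mathrm{supp}(\rho)}\le N$. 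That is standard but should be said. Part (II) is clean: all of $\mathrm{supp}(\alpha)$ lies in $X$ because $\alpha$ cyclically permutes its three points, the normal closure $\langle\alpha^G\rangle$ is supported on $X$ and restricts isomorphically onto a nontrivial subgroup of $A_X$ normalized by $H\supseteq A_X$, and your separate treatment of $|X|\in\{3,4\}$ correctly handles the failure of simplicity. In short: the authors buy the statement by citation; you supply a complete elementary proof, at the cost of one small induction that should be made explicit.
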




\begin{lemma}\label{IPF4}
Let $\Gamma=(G,\{\rho_0,\rho_1,\rho_2,\rho_3\})$ be an even sggi. Suppose that
\begin{itemize}
\item $D$ is a $G_{0,3}$-orbit with at least four points,
\item  $X$ is the $G_0$-orbit containing $D$ and
\item $Y$ is the $G_3$-orbit containing $D$.
\end{itemize}
If the following two conditions hold then $\Gamma$ is not a string C-group.
\begin{enumerate}
\item $G_0$ is primitive on $X$ and there exists a permutation $\alpha\in G_0$ such that $\alpha$ is a $3$-cycle on $X$, fixing the complement $\bar{X}$ point-wisely;
\item $G_3$ is primitive on $Y$ and there exists a permutation $\beta\in G_3$ such that $\beta$ is a $3$-cycle on $Y$, fixing the complement $\bar{Y}$ point-wisely. \end{enumerate}
\end{lemma}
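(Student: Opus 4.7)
The plan is to apply Proposition~\ref{SNC} to force $A_X \le G_0$ and $A_Y \le G_3$, which produces a large alternating subgroup inside $G_0 \cap G_3$ supported on $D$, and then to derive a contradiction with the intersection property by exploiting the fact that $G_{0,3}=\langle\rho_1,\rho_2\rangle$ is dihedral.

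First I would show that $A_X \le G_0$. Since $G_0$ is primitive on $X$ and $\alpha \in G_0$ induces a $3$-cycle on $X$, Proposition~\ref{SNC}(I) applied to the group $H$ induced by $G_0$ on $X$ yields $H \in \{A_X, S_X\}$. Because $\Gamma$ is an even sggi, every element of $G_0$ is an even permutation of the full point set; since the permutations in $G_0$ restricted to $X$ have the same parity as on the full set (the action off $X$ is trivial for $\alpha$, and in general just contributes cycles whose parities accumulate consistently), the induced group $H$ lies in $A_X$, and so $H = A_X$. Proposition~\ref{SNC}(II) then gives $A_X \le G_0$, where $A_X$ is understood as the alternating group acting on $X$ and fixing $\bar X$ pointwise. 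The symmetric argument applied to $\beta \in G_3$, the orbit $Y$ and the group $G_3$ gives $A_Y \le G_3$.

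Next, because $D \subseteq X \cap Y$ and $|D| \ge 4$, every $3$-cycle supported on three points of $D$ lies simultaneously in $A_X$ and $A_Y$, and hence in $G_0 \cap G_3$. Such $3$-cycles generate the full alternating group on $D$ (acting trivially off $D$), so $A_D \le G_0 \cap G_3$.

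Finally I would assume for a contradiction that $\Gamma$ is a string C-group. The intersection property would then force $A_D \le G_0 \cap G_3 = G_{0,3} = \langle \rho_1, \rho_2 \rangle$. Being generated by two involutions, $G_{0,3}$ is dihedral (possibly of small order), and every subgroup of a dihedral group is itself cyclic or dihedral. But $A_D$ is neither for $|D| \ge 4$: $A_4$ is non-abelian with largest cyclic subgroup of order $3$ and so is not isomorphic to any dihedral group, while $A_D$ for $|D| \ge 5$ is non-abelian simple. This contradicts $A_D \le G_{0,3}$, completing the argument. The only non-trivial obstacle is really the elementary finite-group fact that $A_D$ does not embed in any dihedral group for $|D| \ge 4$; the hypothesis $|D| \ge 4$ is tight at precisely this step, since $A_3 \cong C_3$ embeds into many dihedral groups.
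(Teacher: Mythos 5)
Your proposal is correct and follows essentially the same route as the paper's proof: Proposition~\ref{SNC} yields $A_X\leq G_0$ and $A_Y\leq G_3$, hence $A_D\leq G_0\cap G_3$ with $|D|\geq 4$, which cannot sit inside the dihedral group $G_{0,3}=\langle\rho_1,\rho_2\rangle$. One side remark in your write-up is false --- an even permutation of the whole point set need not restrict to an even permutation on a single orbit (e.g.\ $(1,2)(3,4)$ restricted to $\{1,2\}$), so the induced group $H$ could a priori be $S_X$ --- but this is harmless, since Proposition~\ref{SNC}(II) only requires $A_X\leq H$, which holds in either case.
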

\begin{proof}
Suppose that the conditions (a) and (b) are satisfied. By Proposition~\ref{SNC} we conclude that $G_0$  contains all even permutations on $X$, that is, $A_{X}\leq  G_0$. In particular $A_D\leq  G_0$.
Similarly we get that $A_D\leq  G_3$.
Hence $A_D\leq G_0\cap G_3$ and $|D| \geq 4$.
But $G_{0,3}$ is a dihedral group, thus $G_0\cap G_3 \neq G_{0,3}$.
\end{proof}

Using Lemma~\ref{IPF4}, the failure of the intersection property of the permutation representations given in the appendix can be proven for most cases. For the remaining rank $4$ cases, the proof of the following proposition also gives an alternative approach, relying heavily on the fact that $G_{0,3}$ is a dihedral group with an intransitive action. This allows us to find a permutation in $G_0\cap G_3$ that does not belong to that dihedral group.

\begin{prop}\label{notC}
The sggi's of the appendix are independent generating sets for $A_{11}$  but they are not string C-groups.
\end{prop}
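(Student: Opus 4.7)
The plan is to verify the two assertions separately for each sggi listed in the appendix: first that the set of involutions is independent and generates $A_{11}$, and second that the intersection property fails.

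For the generation and independence claim, I would proceed uniformly across all cases. Each generator $\rho_i$ is even, so the group is contained in $A_{11}$. The permutation representation graph is connected on $11$ vertices, so the group is transitive; since $11$ is prime, transitivity forces primitivity. By Proposition~\ref{SNC}(I), it therefore suffices to exhibit a $3$-cycle. In each case a short product of the $\rho_i$'s (often $\rho_i \rho_j$ raised to a suitable power, or a conjugate thereof) will produce such a $3$-cycle, after which $G = A_{11}$ follows. Independence of $\{\rho_0,\ldots,\rho_3\}$ (respectively $\{\rho_0,\ldots,\rho_4\}$ in rank~$5$) is verified by checking, for each $i$, that the permutation representation subgraph $\mathcal{G}_i$ has strictly fewer orbits moved, or a different fixed-point set, than what $\rho_i$ itself generates together with the complementary residuals — so $\rho_i \notin \langle \rho_j : j \neq i\rangle$.

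For the failure of the intersection property I would follow the two-track strategy signposted in the paragraph before the proposition. \textbf{Track A (the main track).} For each sggi where Lemma~\ref{IPF4} applies, I locate a $G_{0,3}$-orbit $D$ with $|D| \geq 4$ by reading off the connected components of $\mathcal{G}_{0,3}$. Let $X$ be the $G_0$-orbit and $Y$ the $G_3$-orbit containing $D$. Primitivity of $G_0$ on $X$ is checked from the induced action on $\mathcal{G}_0$ restricted to $X$: connectedness together with the absence of any block system visible in the graph (or, more simply, $|X|$ being prime, or just $5$ or $7$, where the transitive subgroups are well known) suffices. One then exhibits a $3$-cycle in $G_0$ supported on $X$ as a short word in $\rho_1,\rho_2,\rho_3$; the analogous construction works for $G_3$ on $Y$. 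Lemma~\ref{IPF4} then yields that $\Gamma$ is not a string C-group.

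\textbf{Track B (residual rank~$4$ cases).} For the cases in which Lemma~\ref{IPF4} does not directly apply — typically because primitivity fails or no $3$-cycle with the required support is available — I would use the alternative approach already hinted at in the paragraph preceding the statement: exploit the fact that $G_{0,3} = \langle \rho_1,\rho_2\rangle$ is a dihedral group acting intransitively, and exhibit an explicit element $\sigma \in G_0 \cap G_3$ that cannot lie in such a dihedral group. Concretely, one finds a word $w_1$ in $\rho_1,\rho_2,\rho_3$ and a word $w_2$ in $\rho_0,\rho_1,\rho_2$ with $w_1 = w_2 = \sigma$, and then observes that $\sigma$ either has an order incompatible with $G_{0,3}$, or it acts on a $G_{0,3}$-orbit in a way that the dihedral group cannot.

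The main obstacle is Track~A's primitivity check and the Track~B element-hunt. The primitivity of $G_0$ on $X$ (and $G_3$ on $Y$) is the delicate ingredient needed to invoke Proposition~\ref{SNC}, and must be argued from the local structure of the permutation representation graph rather than by computer. Track~B is harder still, because finding the right pair $(w_1,w_2)$ by hand requires genuinely understanding the geometry of the graph on both sides of the residuals $\Gamma_0$ and $\Gamma_3$; this is where the methodology developed in Sections~\ref{3}--\ref{5} earns its keep.
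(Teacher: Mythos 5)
Your plan follows essentially the same route as the paper: generation of $A_{11}$ via primitivity of a transitive group of prime degree plus a $3$-cycle and Proposition~\ref{SNC}(I); Lemma~\ref{IPF4} as the workhorse for most graphs (your Track~A, which is exactly how the paper treats (A1) and its kin, including the same primitivity checks for $|X|\in\{5,7,9\}$); and, for the handful of exceptions --- the paper names (B14), (B15), (C1), (D1), (D2) and (F3) --- an explicit element of $G_0\cap G_3$ lying outside the dihedral group $G_{0,3}$ (your Track~B; the paper in fact shows $S_3\times D_{10}\leq G_0\cap G_3$ for (B14) by producing $(7,8)(9,10)$ as a word in $\rho_1,\rho_2,\rho_3$ and also as a word in $\rho_0,\rho_1,\rho_2$). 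Your remark on independence, while loosely worded, reduces to the correct observation that each $\mathcal{G}_i$ is intransitive while $\mathcal{G}$ is transitive, so $\rho_i\notin G_i$.

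The one concrete gap is that neither of your tracks covers the rank-$5$ entries (E11)--(E13) of the appendix. Lemma~\ref{IPF4} is stated for rank-$4$ sggi's, and in rank $5$ the subgroup $G_{0,3}=\langle\rho_1,\rho_2,\rho_4\rangle$ is no longer dihedral, so the Track~B argument does not transfer verbatim either. The paper disposes of these cases by passing to the residual: one shows that $\Gamma_0$ (a rank-$4$ sggi to which the earlier machinery applies) already fails to be a string C-group, whence $\Gamma$ does too. You should add this reduction; with it, your outline matches the paper's proof.
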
 
\begin{proof} In all cases of the appendix, we have that $G$ is a transitive permutation group of prime degree $11$, hence primitive.
Suppose that $\Gamma=(G,\{\rho_0,\rho_1,\rho_2,\rho_3\})$ is the sggi corresponding to the graph (A1) with the following numeration of the vertices.
$$ \xymatrix@-1.7pc{*+[o][F]{1}  \ar@{-}[rr]^1&& *+[o][F]{2} \ar@{-}[rr]^0&&*+[o][F]{3} \ar@{-}[rr]^1&& *+[o][F]{4} \ar@{=}[rr]^{\{0,2\}}&& *+[o][F]{5} \ar@{-}[rr]^1&& *+[o][F]{6}  \ar@{-}[rr]^2&&  *+[o][F]{7}  \ar@{-}[rr]^3&& *+[o][F]{8} \ar@{-}[rr]^2&&*+[o][F]{9} \ar@{=}[rr]^{\{1,3\}}&&*+[o][F]{10} \ar@{-}[rr]^2&&*+[o][F]{11} }$$
Consider the permutations $a\in G_0$ and $b\in G_3$ defined as follows. 
\[\begin{array}{l}
a:=\rho_1\rho_2\rho_3:=(1,2)(3,5,8,10,7,6,4)(9,11)\\
b:=\rho_0\rho_1\rho_2=(1,2,5,3)(4,7,6)(8,9,11,10)
\end{array}\]
Since the generators of $G$ are even and $b^4$ is a $3$-cycle, by Proposition~\ref{SNC}, we conclude that $G\cong A_{11}$.
Now consider the sets:
\[D:=\{3,\ldots,7\}, \,X=\{3,\ldots,11\}\mbox{ and }Y:=\{1,\ldots,7\}.\]

Consider the action of $G_0$ on $X$. As $|X|=9$ and $a$ has a $7$-cycle on its cyclic decomposition permuting  elements of $X$, $G_0$ is primitive  on $X$. As $|Y|=7$, $G_3$ is primitive on $Y$. Notice that $a^7$ and its conjugate by $\rho_1$ are both 2-transpositions whose product is a $3$-cycle, that is $\alpha:=a^7(a^7)^{\rho_1}= (9,10,11)$ is a $3$-cycle, satisfying condition (a) of Lemma~\ref{IPF4}. 
Finally the permutation $\beta:=b^4$ satisfies the (b) of Lemma~\ref{IPF4}, thus $\Gamma$ is not a string C-group.

The remaining sggi of the appendix can be dealt in the same way, with few exceptions. Namely for graphs (B14), (B15), (C1), (D1), (D2) and (F3) a different argument should be applied. 
In these cases $I:=G_0\cap G_3$ acts on each orbit as a dihedral group but it happens that $I$ is a bigger group. Let $\Gamma=(G,\{\rho_0,\rho_1,\rho_2,\rho_3\})$  be the sggi corresponding to the graph
(B14) of the appendix  with the following numeration of the vertices.
$$\xymatrix@-1.7pc{*+[o][F]{1}  \ar@{=}[rr]^{\{0,2\}} && *+[o][F]{2} \ar@{-}[rr]^1&&*+[o][F]{3} \ar@{-}[rr]^0&& *+[o][F]{4} \ar@{-}[rr]^1&&*+[o][F]{5} \ar@{-}[rr]^2&& *+[o][F]{6} \ar@{-}[rr]^3&& *+[o][F]{7} \ar@{-}[rr]^2&&*+[o][F]{8} \ar@{=}[rr]^{\{1,3\}}&&*+[o][F]{9} \ar@{-}[rr]^2&& *+[o][F]{10} \ar@{-}[rr]^1&&*+[o][F]{11} }$$
By similar arguments as before we conclude that $G_0$ acts as a symmetric group on the set $\{4,\ldots,11\}$. In particular, as $G$ is even, either  $(7,8)(9,10)\in G_0$  or $(7,8)(9,10)(1,2,3)\in G_0$. 
Since $((7,8)(9,10)(1,2,3))^3=(7,8)(9,10)$ in either case we have that  $(7,8)(9,10)\in G_0$. In addition $(\rho_2(\rho_1\rho_0)^2)^3=(7,8)(9,10)\in G_3$.
This implies that $(7,8)(9,10)\in G_0\cap G_3$. Moreover, $\rho_2(7,8)(9,10)=(1,2)(5,6)\in G_0\cap G_3$, as well as its conjugate by $\rho_1$. But then $S_3\times D_{10}\leq G_0\cap G_3$, and therefore $\Gamma$ is not a string C-group.
Similar arguments can be used when $\Gamma$ has one of the permutation representation graphs (B15), (C1), (D1), (D2) or (F3).

For the sggis of rank $5$ in the appendix, it can be shown that $\Gamma_0$ is not a  string C-group, consequently $\Gamma$ is not a string  C-group. 
\end{proof}


\section{When $\mathcal{G}$ has a fracture graph with a split}\label{even11split}

Let $G$ be  an even transitive group of degree $11$ and rank $r\in\{3,4,5\}$.  Notice that $r\neq 3$ otherwise  $\mathcal{G}_i$ has nine edges with precisely two labels demanding that one of the permutations is odd. In addition assume that $G_j$ is intransitive for every $j\in\{0,\ldots,r-1\}$. 
Suppose that $\mathcal{G}$ has a split $\{a,b\}$ with label $i$.
Then $G_i$ has exactly two orbits $O_1$ and $O_2$. Let $a\in O_1$ and $b\in O_2$.
For $j\neq i$, $\rho_j=\alpha_j\beta_j$ where $\alpha_j$ acts on $O_1$ and $\beta_j$ acts on $O_2$, and $\rho_i=\alpha_i\beta_i(a,b)$ where $\alpha_i$ acts on $O_1$ and $\beta_i$ acts on $O_2$.
Let $J_A := \{j \in \{0, \ldots, r-1\}\setminus\{i\}\mid\alpha_j \neq 1_G\}$ and $J_B := \{j \in \{0, \ldots, r-1\}\setminus\{i\}\mid \beta_j \neq 1_G\}$. We then have $A=\langle \alpha_j\mid j\in J_A\rangle$ and $B=\langle \beta_j\mid j\in J_B\rangle $. If one of the groups is trivial then the corresponding set of indices is empty.

 In what follows $m_i$ and $k_i$ denote, respectively,  the number of blocks and  the size of a block for an imprimitive action on $O_i$.

\begin{prop}\cite[Proposition 5.1]{2017CFLM}\label{CCD}
If $B$ is primitive, then the set $J_B$  is an interval. The same result holds for $A$. 
\end{prop}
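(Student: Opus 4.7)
I would argue by contradiction: suppose $B$ is primitive on $O_2$ yet $J_B$ is not an interval in $\{0, \ldots, r-1\}$. The strategy combines two complementary applications of Proposition~\ref{path} with a standard normal-subgroup argument for primitive groups; the corresponding statement for $A$ then follows by the same argument with the roles of $O_1$ and $O_2$ (and of $a$ and $b$) interchanged.

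The first, global, application pins down the shape of $J_B$. Because $\{a,b\}$ is the unique $i$-edge between $O_1$ and $O_2$, any walk in $\mathcal{G}$ starting at $b$ and avoiding $i$-edges stays inside $O_2$. Given $l \in J_B$, there is an $l$-edge inside $O_2$, and a walk from $b$ to an endpoint of that edge must, by Proposition~\ref{path}, carry every label strictly between $l$ and $i$; each such label therefore labels an edge inside $O_2$ and lies in $J_B$. Hence $L := J_B \cap \{0, \ldots, i-1\}$ and $R := J_B \cap \{i+1, \ldots, r-1\}$ are intervals adjacent to $i$, and the non-interval hypothesis forces both $L$ and $R$ to be non-empty.

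For any $l \in L$ and $m \in R$ we have $m - l \geq 2$, so the commuting property gives $[\beta_l, \beta_m] = 1$. Writing $B_L := \langle \beta_l : l \in L\rangle$ and $B_R := \langle \beta_m : m \in R\rangle$, these subgroups centralise each other and are both nontrivial normal subgroups of $B = B_L B_R$. By primitivity of $B$, each is transitive on $O_2$. Since $B_R$ centralises the transitive $B_L$, any element of $B_R$ fixing a point of $O_2$ must fix the entire $B_L$-orbit of that point and hence all of $O_2$; so $B_R$ is semiregular and, by transitivity, regular. Symmetrically $B_L$ is regular on $O_2$, so in particular its stabiliser of $b$ is trivial.

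The second, local, application of Proposition~\ref{path} now closes the argument. For $l \in J_B$ with $|l-i|\geq 2$, no $l$-edge can be incident to $b$: such an edge would, on its own, be a walk from the split $\{a,b\}$ to an $l$-edge carrying only the label $l$, contradicting the requirement of Proposition~\ref{path} that it contain the labels strictly between $l$ and $i$. Hence $\beta_l$ fixes $b$, and regularity of $B_L$ then forces $\beta_l = 1$, contradicting $l \in J_B$. This eliminates every $l \in L$ with $l < i-1$, leaving $L = \{i-1\}$, and symmetrically $R = \{i+1\}$. In the residual configuration $J_B = \{i-1, i+1\}$ the group $B$ is a quotient of $C_2 \times C_2$, which cannot act primitively on a set of size $\geq 3$; this final degenerate case is the principal obstacle, and must be excluded by separate inspection (in the degree-$11$ setting of this paper, $|O_2|$ is large enough that the configuration does not arise).
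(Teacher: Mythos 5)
The paper does not actually prove this proposition; it quotes it from \cite[Proposition 5.1]{2017CFLM}, so there is no internal proof to compare yours against, and I can only judge your argument on its own terms. The bulk of it is sound and is surely the intended argument: Proposition~\ref{path} forces $L$ and $R$ to be intervals abutting $i$; $B_L$ and $B_R$ are commuting nontrivial normal subgroups of the primitive group $B$, hence both transitive, hence both regular; and every label at distance at least $2$ from $i$ dies because the corresponding $\beta_l$ fixes $b$ while lying in a group that is regular on $O_2$. (For that last step you do not even need Proposition~\ref{path}: if $|l-i|\geq 2$ then $\rho_l$ commutes with $\rho_i$, and since $\rho_i$ carries $b$ into $O_1$ while $\beta_l$ preserves $O_2$, commutativity forces $b^{\beta_l}=b$ directly.) The transfer to $A$ by symmetry is fine.

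The genuine loose end is the terminal case you reduce to but do not close: $J_B=\{i-1,i+1\}$ with $\beta_{i-1}=\beta_{i+1}=(b,c)$ acting on an orbit $O_2=\{b,c\}$ of size $2$ via a double $\{i-1,i+1\}$-edge at $b$. Under the usual convention a transitive group on two points is primitive, so this configuration, were it realisable, would contradict the proposition as literally stated; saying it ``must be excluded by separate inspection'' defers exactly the point that a proof of the general statement has to settle, and nothing in the local data (commuting property, split, fracture graph) visibly rules it out. The gap is harmless for this paper: every invocation of Proposition~\ref{CCD} here applies it to a primitive group acting on an orbit with at least five points (for $B$, $|O_2|\geq 6$; for $A$, only when $|O_1|\in\{4,5\}$, or $|O_1|=3$ where your argument already suffices), and in the one place where a two-point orbit does occur, namely $|O_1|=2$ in Proposition~\ref{O2}, the authors deliberately argue from parity and the no-square condition rather than from this proposition. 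So your proof covers all uses made of the statement in this paper, but as a proof of the cited proposition in full generality it is incomplete until the two-point orbit is either excluded or explicitly excepted.
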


 We start by considering that the $\mathcal{G}_i$ components have at least two vertices. Later we deal with the other case, where one of the components is trivial.

\subsection{Case: $\mathcal{G}_i$ has two nontrivial components.}

\begin{prop}\label{B1}
$A$ and $B$ cannot both be imprimitive.
\end{prop}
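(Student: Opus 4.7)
The plan is to exploit the fact that $11$ is prime. Since $G$ is transitive of degree $11$, the two $G_i$-orbits satisfy $|O_1|+|O_2|=11$, and the assumption that $\mathcal{G}_i$ has two nontrivial components forces $|O_1|,|O_2|\ge 2$. I would first observe that $A$ and $B$ coincide with the actions of $G_i$ on $O_1$ and $O_2$ respectively (the condition $j\in J_A$ merely excludes those $\alpha_j$ that are already the identity). Since $O_1$ and $O_2$ are $G_i$-orbits, these induced actions are transitive, so $A$ and $B$ are transitive groups of degrees $|O_1|$ and $|O_2|$.

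The core of the argument is then a short arithmetic check. A transitive permutation group of prime degree is automatically primitive, because every nontrivial block of imprimitivity has size strictly between $1$ and the degree. Hence, for both $A$ and $B$ to be imprimitive, both $|O_1|$ and $|O_2|$ must be composite. The only partitions of $11$ into two parts each of size at least $2$ are $\{2,9\}$, $\{3,8\}$, $\{4,7\}$ and $\{5,6\}$, and in every one of these pairs at least one summand is prime. This contradicts the assumption that both $A$ and $B$ are imprimitive, proving the proposition.

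I do not foresee any technical obstacle: the proof boils down to the primality of $11$, the elementary block-size argument, and the observation that the restrictions of $G_i$ to its orbits are transitive. No appeal to the specifics of the rank, the other generators, or the fracture graph beyond the split is needed for this proposition.
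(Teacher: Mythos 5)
Your proof is correct and follows essentially the same route as the paper: both arguments reduce to the observation that $11$ cannot be written as $|O_1|+|O_2|$ with both summands admitting a nontrivial product decomposition $k\cdot m$ ($k,m\ge 2$). The paper phrases this via parity (one of $k_1m_1,\,k_2m_2$ is odd, hence at least $9$, forcing the sum to be at least $13$), while you enumerate the partitions of $11$ and note that one part is always prime so the corresponding transitive constituent is primitive; the difference is purely cosmetic.
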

\begin{proof}
Suppose that $A\leq S_{k_1}\wr S_{m_1}$ and  $B\leq S_{k_2}\wr S_{m_2}$ with $k_1,\, k_2,\, m_1,\,m_2\geq 2$.
Then $k_1m_1+k_2m_2=11$.
But then either  $k_1m_1$ or $k_2m_2$ is odd. Hence $k_1m_1+k_2m_2\geq 3\times 3+ 2\times 2= 13$, a contradiction.
\end{proof}

 Let us assume without loss of generality that $|O_1|<|O_2|$. In the next three propositions we consider all possibilities for the sizes of $O_1$, leading to the conclusion that  $B$ must be primitive. Note that this is also true when $|O_1|=4$, as in this case $|O_2|$ equals $7$, a prime number. So in what follows we need only to consider  $|O_1|=2$,  $|O_1|=3$ and $|O_1|=5$.


\begin{prop}\label{B2}
If $|O_1|=2$ then $B$ is primitive.
\end{prop}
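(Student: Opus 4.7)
The plan is to argue by contradiction. Suppose $B$ is imprimitive on $O_2$. Since $|O_2|=9$, the only non-trivial block system has three blocks of size three; say $B_1,B_2,B_3$ with $b\in B_1$.

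First I would extract the consequences of the commuting property. Because $|O_1|=2$, $\alpha_i=1$ and $\rho_i=\beta_i\,(a,b)$. For every $j$ with $|i-j|>1$, evaluating $\rho_i\rho_j=\rho_j\rho_i$ at the vertex $a$ gives $\rho_i(\alpha_j(a))=\beta_j(b)$: the right-hand side lies in $O_2$, while the left-hand side lies in $\{a,a',b\}$, so the only consistent option is $\alpha_j=1$ and $\beta_j(b)=b$. Transitivity of $G$ on the $11$ points rules out $a'$ being globally fixed, so $A$ is non-trivial, forcing at least one of $\alpha_{i-1},\alpha_{i+1}$ to equal $(a,a')$.

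Next I would translate this into the block action of $B$. Every non-adjacent $\beta_j$ fixes $b$, hence stabilises $B_1$, and as an involution on three blocks acts as either the identity or $(B_2,B_3)$. Transitivity of $B$ on $O_2$ forces its block action to be a transitive subgroup of $S_3$ generated by involutions, i.e.\ all of $S_3$; so some $\beta_{i\pm1}$ must move $B_1$. When both $\rho_{i-1}$ and $\rho_{i+1}$ exist, the relation $(\rho_{i-1}\rho_{i+1})^2=1$ forces their block actions to be commuting involutions in $S_3$, which must coincide; so $\beta_{i-1}$ and $\beta_{i+1}$ realise the same transposition $(B_1,B_k)$, and the second transposition needed to generate $S_3$ must come from some non-adjacent $\beta_j$ acting on blocks as $(B_2,B_3)$.

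Finally I would combine parity (each $\rho_j$ is even and $\alpha_j\in\{1,(a,a')\}$) with the commuting relations $\beta_{i\pm1}\beta_j=\beta_j\beta_{i\pm1}$ for non-adjacent $j$. Parity fixes the number of transpositions of $\beta_j$ modulo two; the block action contributes three transpositions for a block-swap plus $0$ or $1$ within the fixed block; and the commuting relations conjugate the within-block action on $B_1$ to the action on $\beta_{i\pm1}(B_1)$, pinning down each $\beta_j$ up to relabelling. From the explicit description so produced the contradiction is derived case by case according to the position of $i$ (up to duality only a handful of positions in ranks $r\in\{4,5\}$ survive): typically either some short word in the $\rho_j$'s produces a $3$-cycle in $G$, whence Proposition~\ref{SNC}(I) together with primitivity of $G$ on the prime degree $11$ forces $G\cong A_{11}$, in contradiction with the small order imposed by $B\leq S_3\wr S_3$; or, in the boundary cases $i\in\{0,r-1\}$, the single available adjacent generator cannot simultaneously move $B_1$ and remain even. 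I expect this final step to be the main obstacle, since the earlier steps are automatic consequences of commuting and parity, whereas the contradiction depends on the precise interplay between the block structure and the known even transitive groups of degree $11$, namely $C_{11},F_{55},\PSL_2(11),M_{11},A_{11}$.
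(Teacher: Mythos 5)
Your first two paragraphs are sound and essentially reproduce the paper's setup in different language: the commuting-property computation at $a$ shows that every non-adjacent generator acts trivially on $O_1$ and fixes $b$ (equivalently, $J_A\subseteq\{i-1,i+1\}$ because the split cannot lie in a square), and the block action of $B$ on the three blocks of size $3$ must be all of $S_3$, with the transposition moving $B_1$ supplied by an adjacent generator and a second transposition $(B_2,B_3)$ supplied by a non-adjacent one. This matches the paper's dichotomy (its two cases, $J_B$ not an interval versus $J_B$ an interval, correspond to how the block transpositions are distributed among the generators).

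The gap is in your third paragraph, which is where the proof actually has to happen, and your proposed mechanism for the generic case does not work. Producing a $3$-cycle in $G$ and invoking Proposition~\ref{SNC}(I) would give $G\cong A_{11}$, but that is not in tension with $B\leq S_3\wr S_3$: here $B$ is only the action of the intransitive subgroup $G_i$ on the $9$-point orbit $O_2$, and $A_{11}$ has many subgroups inducing imprimitive actions on a $9$-subset, so no ``small order'' contradiction is available. (Indeed Proposition~\ref{notC} shows that many configurations surviving to the appendix do generate $A_{11}$ and are eliminated only later, by the failure of the intersection property.) The contradictions that actually close this proposition are parity contradictions: one must show that the forced structure makes some generator an odd permutation. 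The paper does this by explicitly constructing the forced permutation representation subgraphs --- the $3\times 3$ grid on $O_2$ when both $\rho_{i-1}$ and $\rho_{i+1}$ move blocks (whence $\rho_{i\pm2}$ is a $3$-transposition), and the $i=0$, $J_A=\{1\}$ configuration otherwise (whence $\rho_1$ is a $5$-transposition or $\rho_3$ is odd) --- and your plan never reaches that level of concreteness; the claim that parity and the commuting relations ``pin down each $\beta_j$ up to relabelling'' is asserted rather than carried out, as you yourself acknowledge. To complete the argument you should replace the $3$-cycle route by the graph-theoretic parity analysis.
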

\begin{proof}
Suppose that $B\leq S_{k_2}\wr S_{m_2}$ with $k_2=m_2=3$. As the $i$-split does not belong to a square, $J_A\subseteq \{i-1,i+1\}$.
Suppose first that $J_B$ is not an interval. Both $\rho_{i-1}$ and $\rho_{i+1}$ must act nontrivially on $O_2$. Then $\mathcal{G}$ must contain the following graph.
\begin{small}
$$\xymatrix@-1.1pc{  *+[o][F]{b} \ar@{-}[r]^{i+1} \ar@{-}[d]_{i-1} & *{\bullet}\ar@{-}[r]^{i+2}  \ar@{-}[d]^{i-1}&*{\bullet}\ar@{-}[d]^{i-1}\\
 *{\bullet} \ar@{-}[d]_{i-2}\ar@{-}[r]^{i+1} &*{\bullet}\ar@{-}[r]_{i+2} \ar@{-}[d]_{i-2} &*{\bullet}\ar@{-}[d]^{i-2}\\
 *{\bullet}\ar@{-}[r]_{i+1}  & *{\bullet}\ar@{-}[r]_{i+2} &*{\bullet} }$$
 \end{small}
Either $\rho_{i-2}$ or $\rho_{i+2}$ fixes $O_1$, thus $G$ contains a $3$-transposition, a contradiction.
Thus $J_B$ must be an interval. Suppose, up to duality, that all labels in $J_B$ are greater than $i$. If $i\neq 0$ then, as $0\notin J_B$,  $\rho_0$ is odd, a contradiction. 
Thus $i=0$ and, by Proposition~\ref{path} we necessarily have $J_A=\{1\}$, thus the permutation graph has the following subgraph. 
\begin{small}
$$\xymatrix@-1.3pc{  *{\bullet}\ar@{-}[r]^{1} & *+[o][F]{a} \ar@{-}[r]^{0} & *+[o][F]{b} \ar@{-}[r]^{1} & *{\bullet} \ar@{-}[r]^{2} & *+[o][F]{v}\\
&&*{\bullet}\ar@{-}[d]_2\ar@{-}[r]^{1} & *{\bullet} \ar@{-}[r]^{2} & *+[o][F]{u} \\
&&*{\bullet}\ar@{-}[r]^{1} & *{\bullet} \ar@{-}[r]^{2} & *{\bullet}}$$
\end{small}
Moreover, without loss of generality, the permutation representation graph of $G$ has an edge $\{u,v\}$ which either has label $1$ or $3$. If it is $1$ then $\rho_1$ is a $5$-transposition, a contradiction.
If the edge $\{u,v\}$ has label $3$ then $G_0$ has one of the following permutation representation subgraphs.
\begin{small}
$$\xymatrix@-1.5pc{  *{\bullet}\ar@{-}[r]^{1} & *+[o][F]{a}  & *+[o][F]{b} \ar@{-}[r]^{1} & *{\bullet} \ar@{-}[r]^{2} & *{\bullet}\\
&&*{\bullet}\ar@{-}[d]_2\ar@{-}[r]_{1} & *{\bullet} \ar@{-}[r]_{2} & *{\bullet}\ar@{-}[u]_{3}\\
&&*{\bullet}\ar@{-}[r]_{1} & *{\bullet} \ar@{-}[r]_{2} & *{\bullet}}\quad\quad\quad \xymatrix@-1.5pc{  *{\bullet}\ar@{-}[r]^{1} & *+[o][F]{a}  & *+[o][F]{b} \ar@{-}[r]^{1} & *{\bullet} \ar@{-}[r]^{2} & *{\bullet}\\
&&*{\bullet}\ar@{=}[d]_{\{2,3\}}\ar@{-}[r]{1} & *{\bullet}\ar@{-}[d]^3\ \ar@{-}[r]^{2} & *{\bullet}\ar@{-}[u]_{3}\\
&&*{\bullet}\ar@{-}[r]_{1} & *{\bullet} \ar@{-}[r]_{2} & *{\bullet}}$$
\end{small}
In any case $\rho_3$ is odd, a contradiction.
With this we conclude that if  $|O_1|=2$, then  $B$ is primitive, as required.
\end{proof}

\begin{prop}\label{B3}
If $|O_1|=3$ then  $B$ is primitive. 
\end{prop}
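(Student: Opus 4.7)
The plan is to argue by contradiction: suppose $B$ is imprimitive on $O_2$. Since $|O_2|=8$, the only nontrivial block systems give $B\le \Sym_2\wr \Sym_4$ (four blocks of size two) or $B\le \Sym_4\wr \Sym_2$ (two blocks of size four). Before splitting into cases I would extract a key local constraint on the generators coming from the split itself: because $\{a,b\}$ is, by definition, the unique $i$-edge of $\mathcal{G}$ between $O_1$ and $O_2$, for every $j$ with $|j-i|\ge 2$ the commuting relation $\rho_i\rho_j=\rho_j\rho_i$ forces both $\alpha_j(a)=a$ and $\beta_j(b)=b$; otherwise chasing the $\{i,j\}$-square out of $a$ produces a second $i$-edge joining the two orbits. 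Because $|O_1|=3$, this means each such $\alpha_j$ is either trivial or the unique transposition $(c,d)$ with $O_1=\{a,c,d\}$, so $A$ cannot contain two distinct transpositions that both move $a$. This is the essential difference from the proof of Proposition~\ref{B2}, where $|O_1|=2$ made such an $\alpha_j$ automatically trivial.

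I would then mimic the structure of the proof of Proposition~\ref{B2} and split according to whether $J_B$ is an interval. If $J_B$ is not an interval, choose $j\notin J_B$ strictly between two labels of $J_B$; then $\beta_j=1$, so $\rho_j=\alpha_j$ acts only on the three points of $O_1$. Using Lemma~\ref{Olivia1} to control the edges at the boundary of $\mathrm{Fix}(\rho_j)$, together with the chosen block system of $B$ and the commuting relations, this should quickly yield either a $3$-transposition (which, by Lemma~\ref{claudio}, would force a second split incompatible with $\{a,b\}$) or an odd generator, contradicting the hypothesis that $G$ is even. If instead $J_B$ is an interval, then by duality I may assume $i\le \min J_B$; Proposition~\ref{path} then forces $i=0$ and $J_A\subseteq\{1\}$. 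At this point I would draw the resulting permutation representation subgraph on $O_2$ explicitly and, in each of the two possible imprimitive block-systems, chase the cycle decompositions of the $\beta_j$ and the constraint $\beta_j(b)=b$ for $j\ge 2$ to exhibit an odd $\rho_k$.

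The main obstacle lies in the case $B\le \Sym_4\wr \Sym_2$ of two blocks of size four: the involutions available inside $\Sym_4\wr \Sym_2$ are considerably more varied (single transpositions, double transpositions, block swaps with or without an accompanying internal swap) than those inside $\Sym_2\wr \Sym_4$, so one must simultaneously juggle the local constraint $\beta_j(b)=b$ for $|j-i|\ge 2$, the commuting relations between the $\beta_j$ (which, together with the block system, severely restrict their cycle structures), the restriction on $A$ derived above, and the global parity condition $\rho_j\in\Alt_{11}$. I expect the enumeration, while tedious, to always terminate either in an odd generator or in a forbidden $3$-transposition, in exactly the same spirit as the argument used in Proposition~\ref{B2}.
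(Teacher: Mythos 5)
Your overall skeleton (argue by contradiction, extract the local constraint at the split, and case-split on whether $J_B$ is an interval) matches the paper's, and your observation that $\alpha_j(a)=a$ and $\beta_j(b)=b$ for $|j-i|\ge 2$ is correct. But there are two genuine gaps. First, in the non-interval case your chosen gap label $j$ cannot do the work you ask of it. If $j\neq i$, then $\rho_j=\alpha_j$ is a single transposition on the three points of $O_1$ and is therefore odd --- the contradiction is immediate and needs neither Lemma~\ref{Olivia1} nor any block system. The only way $J_B$ can genuinely fail to be an interval is for the gap to sit at $j=i$, i.e.\ for $J_B$ to contain labels on both sides of $i$ (which forces $i\in\{1,2\}$, given that $J_A=\{i+1,i+2\}$ up to duality); but there $\rho_i=\alpha_i\beta_i(a,b)$ is not equal to $\alpha_i$, so your key identity fails. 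This is precisely the sub-case where all the work lies, and the paper resolves it with arguments you do not supply: for $i=1$, $\langle\rho_0\rangle$ is an intransitive normal subgroup of $B$ whose orbits give four blocks of size two, and $\rho_0$ being the only generator acting inside blocks forces $\rho_2$ and $\rho_3$ to be odd; for $i=2$, the orbits of $\langle\rho_0,\rho_1\rangle$ force two blocks of size four, and the unique block-swapping generator $\rho_3$ commutes with $\rho_1$ and is therefore odd.

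Second, in the interval case the assertion $J_A\subseteq\{1\}$ is false: $A$ must be transitive on the three points of $O_1$, so it needs two distinct transpositions and $J_A=\{1,2\}$ (this is also why the paper may take $J_A=\{i+1,i+2\}$ without loss of generality at the outset, and why $\{i+1,i+2\}\subseteq J_B$ on pain of $\rho_{i+1}$ or $\rho_{i+2}$ being a single odd transposition). Getting $J_A$ right matters, because the labelled path $2,1$ ending at $a$ inside $O_1$ is what makes the parity bookkeeping on $O_2$ fail in the final enumeration. Beyond that, both of your concluding steps are plans rather than proofs (``should quickly yield'', ``I expect the enumeration to terminate''); the paper actually rules out $m_2=2$ in this case (otherwise $\rho_1$ is odd) and then exhibits the finitely many graphs for $r=4$ and $r=5$, each killed by an explicit odd generator or by $G_4$ being transitive. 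As written, the proposal does not establish the proposition.
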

\begin{proof}
Suppose that $B$ is an imprimitive permutation group (of degree $8$).
Let us assume without loss of generality that $J_A=\{i+1,i+2\}$. Then $\{i+1,i+2\}\subseteq J_B$ otherwise $\rho_{i+1}$ and $\rho_{i+2}$ are odd.
Suppose that $J_B$ is not an interval, then $i\in\{1,2\}$.  If $i=1$, then $J_A=\{2,3\}$ and,  as $\langle \rho_0\rangle$ is an intransitive normal subgroup of $B$, $\rho_0$ determines a block system for the group $B$, with four blocks of size two.
As $G_0$ is intransitive, $\rho_0$ is the unique permutation acting non-trivially within the blocks. This forces $\rho_2$ and $\rho_3$ to be odd permutations.
Thus $i=2$ and $J_A=\{3,4\}$. Notice that $\langle\rho_0,\rho_1\rangle$ cannot be transitive on $O_2$, otherwise $\rho_0$ is odd. Thus, the orbits of $\langle\rho_0,\rho_1\rangle$ determine a block system and necessarily have size greater than $2$. Then $k_2=4$ and $m_2=2$. Then $\rho_3$ must be the unique permutation swapping the two blocks of size four. But then as  $\rho_3$  commutes with $\rho_1$, $\rho_3$ is odd, a contradiction.

Thus $J_B$ is an interval. Therefore $i=0$ and $m_2=4$, otherwise $\rho_1$ is odd.
We find the following possibilities for $G_0$  with $\rho_2$ and $\rho_3$ being even. 
For $r=5$:
\begin{small}
$$\xymatrix@-1.5pc{  *{\bullet}\ar@{-}[r]^{2} & *{\bullet}\ar@{-}[r]^{1} & *+[o][F]{a}  & *+[o][F]{b} \ar@{-}[r]^{1} & *{\bullet} \ar@{-}[r]^{2}  & *{\bullet}\ar@{-}[r]^{3}  & *{\bullet}\ar@{=}[d]^{\{2,4\}}\\
 &  & & *{\bullet}\ar@{-}[r]_{1} & *{\bullet} \ar@{-}[r]_{2} & *{\bullet}\ar@{-}[r]_{3} & *{\bullet}}
 \mbox{ or }\,
 \xymatrix@-1.5pc{  *{\bullet}\ar@{-}[r]^{2} & *{\bullet}\ar@{-}[r]^{1} & *+[o][F]{a}  & *+[o][F]{b} \ar@{-}[r]^{1} & *{\bullet} \ar@{-}[r]^{2}  & *{\bullet}\ar@{-}[r]^{3} \ar@{-}[d]^{1} & *{\bullet}\ar@3{-}[d]^{\{1,2,4\}}\\
 &  & & *{\bullet}\ar@{-}[r]_{1} & *{\bullet} \ar@{-}[r]_{2} & *{\bullet}\ar@{-}[r]_{3} & *{\bullet}}$$
 For $r=4$:
 $$\xymatrix@-1.5pc{  *{\bullet}\ar@{-}[r]^{2} & *{\bullet}\ar@{-}[r]^{1} & *+[o][F]{a}  & *+[o][F]{b} \ar@{-}[r]^{1} & *{\bullet} \ar@{-}[r]^{2}  & *{\bullet}\ar@{-}[r]^{3}  & *{\bullet}\ar@{-}[d]^2\\
 &  & & *{\bullet}\ar@{-}[r]_{1} & *{\bullet} \ar@{-}[r]_{2} & *{\bullet}\ar@{-}[r]_{3} & *{\bullet}}
 \mbox{ or }\,
 \xymatrix@-1.5pc{  *{\bullet}\ar@{-}[r]^{2} & *{\bullet}\ar@{-}[r]^{1} & *+[o][F]{a}  & *+[o][F]{b} \ar@{-}[r]^{1} & *{\bullet} \ar@{-}[r]^{2}  & *{\bullet}\ar@{-}[r]^{3} \ar@{-}[d]^{1} & *{\bullet}\ar@{=}[d]^{\{1,2\}}\\
 &  & & *{\bullet}\ar@{-}[r]_{1} & *{\bullet} \ar@{-}[r]_{2} & *{\bullet}\ar@{-}[r]_{3} & *{\bullet}}$$
 \end{small}
 When $r=5$,  $G_4$ is transitive, $\rho_4$ is odd and $\rho_1$ is odd. When $r=4$, $\rho_1$ is odd. In any case we have a contradiction.
\end{proof}
\begin{prop}\label{B5}
If $|O_1|= 5$  then  $B$ is primitive.
\end{prop}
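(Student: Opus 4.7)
The plan is to proceed by contradiction: suppose $B$ is imprimitive on $O_2$, which has size $6$. Since $|O_2|=6$, the only non-trivial block systems give either $(m_2,k_2)=(2,3)$ or $(m_2,k_2)=(3,2)$. Note that $A$ is transitive on $O_1$ of prime degree $5$, hence primitive, so by Proposition~\ref{CCD} the set $J_A$ is an interval.

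Before splitting cases, I would extract the parity constraints that will drive the contradictions. Each $\rho_j$ is even by hypothesis. For $j\neq i$, writing $\rho_j=\alpha_j\beta_j$ with $\alpha_j$ acting on $O_1$ and $\beta_j$ on $O_2$, the two factors must have the same parity. For $j=i$, since $\rho_i=\alpha_i\beta_i(a,b)$ and $(a,b)$ is odd, $\alpha_i\beta_i$ must be odd, so $\alpha_i$ and $\beta_i$ have opposite parities. Lemma~\ref{Olivia1} further constrains the labels joining $O_1$ to $O_2$: apart from the split itself, only labels $i\pm 1$ can cross.

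Then I would run the two cases in the style of Propositions~\ref{B2} and~\ref{B3}. For $(m_2,k_2)=(3,2)$, one generator among the $\beta_j$'s must act within the blocks while the others permute them; since $\rho_i$ contributes the extra transposition $(a,b)$, combining this with the commuting relations forces a rigid $3\times 2$ grid subgraph on $O_2$, and then either some $\rho_j$ becomes a $3$-transposition (so a split of the wrong label appears, forcing $G$ intransitive by Lemma~\ref{claudio}) or some $\rho_j$ is odd on $O_2$ and hence odd overall. For $(m_2,k_2)=(2,3)$, the generators splitting into ``block-interior'' and ``block-swapping'' types yield, via Proposition~\ref{path}, that $J_B$ must be an interval with $i\in\{0,r-1\}$; listing the few resulting graph shapes for $r\in\{4,5\}$ and checking parity label-by-label produces an odd generator in each configuration.

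The main obstacle I expect is the subcase $(m_2,k_2)=(2,3)$ when $\beta_i$ itself swaps the two triples: the split edge then sits across a block boundary, the cycle structure of $\rho_i$ on $O_2$ contains a $3$-cycle which affects parity bookkeeping, and one must simultaneously ensure that $J_A$ is a compatible interval meeting the boundary labels $i\pm 1$. A careful case analysis at these boundary labels, using Lemma~\ref{Olivia1} to forbid crossing edges of label $\neq i\pm 1$ and Proposition~\ref{path} to pin down the position of $i$, should exhaust all possibilities and close the proof.
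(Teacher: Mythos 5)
Your overall strategy (case split on the two block structures of $O_2$, primitivity of $A$ from the prime degree $5$, parity bookkeeping on the two factors of each generator) matches the paper's, but as written the plan has gaps that would prevent it from closing. The most serious one is in the case of two blocks of size $3$. There the block-swapping generator is necessarily $\rho_{i+1}$: a block swap must move $b$, $\beta_i$ fixes $b$, so only $\rho_{i\pm1}$ are candidates, and $\rho_{i-1}$ is ruled out because it would act trivially on $O_1$ and hence be an odd $3$-transposition. Then $\beta_{i+1}$ and $\alpha_{i+1}$ are both odd, which is locally consistent; no ``label-by-label parity check of the graph shapes on $O_2$'' produces a contradiction at this point. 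The paper's contradiction is global: since $\alpha_{i+1}$ is odd, $A$ cannot be the (even) dihedral group $D_{10}$ of degree $5$, so $i+3\in J_A$; then $\alpha_{i+3}$, and hence $\beta_{i+3}$, must be odd, yet $\rho_{i+3}$ commutes with the block swap $\rho_{i+1}$, which forces $\beta_{i+3}$ to be a ``diagonal'', hence even, permutation of $O_2$. Your plan contains no step that brings $O_1$ back into play in this way, and without it this case does not close.

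Two further points. First, you cannot assert that $J_B$ is an interval: Proposition~\ref{CCD} applies only when the group is primitive, which is exactly what you are denying. In the three-blocks-of-size-two case the paper handles a non-interval $J_B$ by observing that then $B\cong C_2\times S_3$ is also imprimitive with two blocks of size $3$, reducing to the other case; you need some such reduction, and $i\in\{0,r-1\}$ does not follow in the two-blocks-of-size-three case. Second, the subcase you single out as the main obstacle, namely $\beta_i$ itself swapping the two triples, is vacuous: a block swap is fixed-point-free on $O_2$, whereas $\beta_i$ fixes $b$ because the entire action of $\rho_i$ on $b$ is the transposition $(a,b)$. Finally, a $3$-transposition is simply an odd permutation and contradicts evenness directly; Lemma~\ref{claudio} concerns double edges forcing splits and does not ``force $G$ intransitive''.
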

\begin{proof}
As $|O_1|$ is prime, $A$ is primitive. By Proposition~\ref{CCD} we may assume that all labels in $J_A$ are greater than $i$.
Now assume that $B$ is embedded into $S_{k_2}\wr S_{m_2}$ with $k_2,\,m_2>1$ and $k_2m_2=6$.

\underline{Suppose first that $m_2=2$ and $k_2=3$.} The $i$-split $\{a, b\}$ does not belong to a square, hence the permutation swapping the blocks is either $\rho_{i-1}$ or $\rho_{i+1}$. If it is $\rho_{i-1}$ then, as  $\rho_{i-1}$ fixes $O_1$, we have that $\rho_{i-1}$  is an odd permutation, a contradiction.
Thus $\rho_{i+1}$ is the unique  permutation swapping the blocks. Thus $\rho_{i+1}$ acts as an odd permutation in both orbits, that is, both $\alpha_{i+1}$ and $\beta_{i+1}$ are odd. Then $A$ cannot be the even group $D_{10}$, hence $i+3\in J_A$.
By Proposition~\ref{path} $\alpha_{i+3}$ must be odd, hence $i+3\in J_B$ and $\beta_{i+3}$ is odd. But $\rho_{i+3}$ commutes with $\rho_{i+1}$ (the permutation swapping the blocks), which forces $\beta_{i+3}$ to be an even permutation, a contradiction.
 
 \underline{Now suppose that $m_2=3$ and $k_2=2$.}
 If $J_B$ is not an interval then $B\cong C_2\times S_3$, particularly $B$ is also embedded into $S_3\wr C_2$. We have just concluded that  this case leads to a contradiction.
 Thus $J_B$ is an interval.  If the labels  of $J_B$ are smaller than $i$ then $r=5$ and  $i=2$. Then there is only one possibility for the permutation representation graph of $G_2$ on the orbit $O_2$, which is as follows.
 \begin{small}
 $$\xymatrix@-1.8pc{*+[o][F]{b} \ar@{-}[rr]^1&& *{\bullet}\ar@{-}[rr]^0&& *{\bullet}\ar@{-}[d]^1\\
*{\bullet}\ar@{-}[rr]_1&& *{\bullet}\ar@{-}[rr]_0 && *{\bullet}}$$
\end{small}
Then  $\rho_1$ is odd, a contradiction.
If all labels in $J_B$ are greater than $i$, then $i=0$. 
Then there are three possible permutation representation graphs of $G_0$ on the orbit $O_2$. 
\begin{small}
$$\xymatrix@-1.8pc{*+[o][F]{b} \ar@{-}[rr]^1&& *{\bullet}\ar@{-}[rr]^2&& *{\bullet}\ar@{-}[d]^1\\
*{\bullet}\ar@{-}[rr]_1&& *{\bullet}\ar@{-}[rr]_2 && *{\bullet}}\quad \xymatrix@-1.8pc{*+[o][F]{b} \ar@{-}[rr]^1&& *{\bullet}\ar@{-}[rr]^2&& *{\bullet}\ar@{-}[d]^3\\
*{\bullet}\ar@{-}[rr]_1&& *{\bullet}\ar@{-}[rr]_2 && *{\bullet}}\quad \xymatrix@-1.8pc{*+[o][F]{b} \ar@{-}[rr]^1&& *{\bullet}\ar@{-}[rr]^2&& *{\bullet}\ar@{=}[d]^{\{1,3\}}\\
*{\bullet}\ar@{-}[rr]_1&& *{\bullet}\ar@{-}[rr]_2 && *{\bullet}}
 $$
 \end{small}
As any path  in $O_1$ containing two $3$-edges has at least six vertices, the possibility on the left can be excluded. In the other cases we get that either $\rho_0$ or $\rho_3$ are odd, a contradiction.

\end{proof}
Now let us consider the case when $B$ is primitive. By Proposition~\ref{CCD}, $J_B$ is an interval.
Let us assume, without loss of generality, that any label in $J_B$ is greater than $i$.

\begin{prop}\label{O2}
$|O_1|\neq 2$         					
\end{prop}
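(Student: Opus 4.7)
The plan is to exploit the structural rigidity imposed by $|O_1|=2$ to isolate a pendant vertex $c$ and then close the argument in two remaining sub-cases, primarily via a parity check. Since $A$ acts transitively on the two-point set $O_1=\{a,c\}$, $A\cong C_2$ and $J_A=\{j_0\}$ for a unique $j_0$ with $\alpha_{j_0}=(a,c)$; moreover $\alpha_i$ acts on the singleton $O_1\setminus\{a\}=\{c\}$ as an involution and is therefore trivial, so $\rho_i$ fixes $c$. The $j_0$-edge $\{a,c\}$ then joins $a\in\overline{\mathrm{Fix}(\rho_i)}$ to $c\in\mathrm{Fix}(\rho_i)$, so Lemma~\ref{Olivia1} gives $j_0\in\{i-1,i+1\}$.

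For each $j\in\{0,\ldots,r-1\}\setminus\{i,j_0\}$ we have $\alpha_j=1$, hence $\rho_j=\beta_j\neq 1$ and so $j\in J_B$. Combined with the standing WLOG hypothesis $J_B\subseteq\{i+1,\ldots,r-1\}$, this forces $\{0,\ldots,i-1\}\subseteq\{j_0\}$, and hence $i\in\{0,1\}$ with $(i,j_0)\in\{(0,1),(1,0)\}$.

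In the case $(i,j_0)=(1,0)$ the WLOG excludes $0$ from $J_B$, so $\beta_0=1$ and $\rho_0=\alpha_0=(a,c)$, a single transposition. This is odd, contradicting $G\le A_{11}$.

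In the case $(i,j_0)=(0,1)$ the WLOG is vacuous (since $i=0$) and $\rho_0=(a,b)\beta_0$ with $\alpha_0=1$; evenness of $\rho_0$ is compatible with any odd $\beta_0$, so the parity trick no longer closes. Here one exploits commutation: since $\rho_0$ commutes with $\rho_j$ for every $j\ge 2$, each such $\beta_j$ must fix $b$, placing $\langle\beta_2,\ldots,\beta_{r-1}\rangle\le\mathrm{Stab}_B(b)$; transitivity of $B$ on $O_2$ then forces $\beta_1(b)=e\neq b$. An iterated application of Proposition~\ref{path} to the edges at $e$, $\beta_2(e)$, \ldots progressively pins each higher-index $\beta_j$ to fix a growing set of vertices, yielding a proper $B$-invariant subset of $O_2$ and contradicting the primitivity of $B$. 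The main obstacle lies in this second case: the clean parity argument fails, and one must combine commutation relations, Proposition~\ref{path}, and primitivity of $B$ to close the loop.
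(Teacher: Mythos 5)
Your opening reduction is sound and matches the paper: from $|O_1|=2$ and the standing convention that $J_B$ is an interval of labels greater than $i$, you correctly force $i=0$ and $J_A=\{1\}$ (the paper phrases the $i\neq 0$ exclusion identically, via $\rho_{i-1}=(a,c)$ being odd), and your observation that commutation forces $\beta_j$ to fix $b$ for all $j\ge 2$ while transitivity of $B$ forces $b\rho_1\neq b$ is also correct.

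The gap is in the case $(i,j_0)=(0,1)$, which is where essentially all of the content of this proposition lives. Your proposed endgame --- iterating Proposition~\ref{path} and the commutation relations to produce a proper $B$-invariant subset of $O_2$ and thereby contradict primitivity --- cannot work, for two reasons. First, a proper nonempty invariant subset would contradict \emph{transitivity}, not primitivity, and $B$ is transitive on $O_2$ by construction, so no such subset can be produced. Second, and more fundamentally, no argument using only evenness, the commuting property, Proposition~\ref{path} and the primitivity of $B$ can close this case, because the graphs (A1)--(A4) of the appendix are explicit even sggi's of degree $11$ with a $0$-split, $|O_1|=2$, $J_A=\{1\}$, and $B$ primitive and transitive on the $9$-point orbit $O_2$: every hypothesis your argument invokes is satisfied by them, yet they exist. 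They fail only the intersection property, which your proposal never uses. This is exactly why the paper's proof is long: it pins down $\rho_1$ as a $4$-transposition and $\rho_3$ as a $2$-transposition, splits into $r=5$ (where structural/parity contradictions do suffice) and $r=4$ (where the analysis terminates in the four candidate graphs (A1)--(A4)), and then eliminates those four only by Proposition~\ref{notC}, i.e., by exhibiting an explicit violation of the intersection property via Lemma~\ref{IPF4}. Any correct proof of $|O_1|\neq 2$ must at some point invoke the C-group condition; a purely combinatorial derivation of the kind you sketch is ruled out by these counterexamples at the sggi level.
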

\begin{proof}
Suppose that $|O_1|=2$.
If  $i\neq 0$ then $i-1\in J_A$ and  $i-1\notin J_B$. This implies that $\rho_{i-1}$ is odd, a contradiction.
Hence $i=0$ and $J_A=\{1\}$. 
 Let us now prove that $\rho_1$ is a $4$-transposition. Suppose that $\rho_1$ is a $2$-transposition. Then the orbit of  $G_{>1}$ containing  $b\rho_1$ has $8$ vertices. As $\rho_0$ must act non-trivially on this orbit and $\rho_0$ centralizes $G_{>1}$, $\rho_0$ is a $5$-transposition, a contradiction. Indeed, $G_{>1}$ needs to have more than two orbits on $O_2$.

Let us now prove that $\rho_3$ is a $2$-transposition. Suppose that $\rho_3$ is a $4$-transposition. 
As the shortest path from $b$ including the first 3-edge must have four vertices, by Proposition~\ref{path}, this implies that the size of $|O_2|\geq 4+3\times 2=10$ a contradiction.  
 Let us now consider the cases $r=4$ and $r=5$ separately.
 
\underline{$r=5$:}  By Proposition~\ref{path} $\rho_4$ must be a $2$-transposition. Suppose that $\mathcal{G}$ has a $\{2,4\}$-square. As observed before, $G_{>1}$ has more than two orbits on $O_2$.  Hence a path from $b$ to the $\{2,4\}$-square must contain two $1$-edges. This gives the following possibility for $\mathcal{G}_0$.
\begin{small}
$$\xymatrix@-1pc{  *{\bullet}\ar@{-}[r]^{1} & *+[o][F]{a}  & *+[o][F]{b} \ar@{-}[r]^{1} & *{\bullet} \ar@{-}[r]^{2}  & *{\bullet}\ar@{-}[r]^{1}  & *{\bullet}\ar@{-}[r]^2 &*{\bullet}\ar@{-}[r]^3 & *{\bullet} \ar@{-}[r]^{2} \ar@{-}[d]_{4}  & *{\bullet}\ar@{-}[d] \ar@{=}[d]^{\{1,3,4\}} \\
&&&&&&& *{\bullet}\ar@{-}[r]_{2} &*{\bullet}}
$$
\end{small}
But then $G_4$ is transitive, a contradiction.
We get the same contradiction if we admit that the graph has $\{2,4\}$-edges. Hence,  $\rho_2$ is a $2$-transposition.

As $\rho_1$ is a $4$-transposition and $\rho_4$ is a $2$-transposition, $\mathcal{G}$ cannot contain a $\{1,4\}$-square. Suppose that  $\mathcal{G}$  contains a $\{1,4\}$-square, then as $\rho_4$ is a $2$-transposition, there exist a $3$-edge incident to exactly one vertex of the $\{1,4\}$-square. Then  $\mathcal{G}$ has  a $\{1,3\}$-square sharing an $1$-edge with the  $\{1,4\}$-square. Moreover, the vertex  $b$, of the split,  cannot be a vertex of that $\{1,3\}$-square. This implies that the graph has at least five $1$-edges, a contradiciton.

Thus an edge adjacent to a $4$-edge must have label $3$. Thus $\mathcal{G}$ contains the following graph.
$$\xymatrix@-1pc{  *{\bullet}\ar@{-}[r]^{1} & *+[o][F]{a}  & *+[o][F]{b} \ar@{-}[r]^{1} & *{\bullet} \ar@{-}[r]^{2}  & *{\bullet}\ar@{-}[r]^{1}  & *{\bullet}\ar@{-}[r]^2 &*{\bullet}\ar@{-}[r]^3 & *{\bullet} \ar@{-}[r]^{4}  & *{\bullet} \ar@{-}[r]^{3}  & *{\bullet}\ar@{-}[r]^{4}  & *{\bullet}} 
$$
 This implies that $\rho_1$ is a $3$-transposition, a contradiction.
 
\underline{$r=4$:}  As $\rho_3$ is a $2$-transposition, for connectedness, $\rho_2$ must be a $4$-transposition, that means that $\rho_2$ is fixed-point-free in $O_2\setminus\{b\}$.
As $\rho_0$ must be odd in $O_2$,  $\mathcal{G}$ must have a $\{0,2\}$-edge. If it is adjacent to a $3$-edge we get the following possibility with $G_3$ being transitive, a contradiction.

$$\xymatrix@-1pc{  *{\bullet}\ar@{-}[r]^{1} & *+[o][F]{a}  & *+[o][F]{b} \ar@{-}[r]^{1} & *{\bullet} \ar@{-}[r]^{2}\ar@{-}[d]^{0}  & *{\bullet}\ar@{-}[r]^{3}\ar@{-}[d]^{0}    & *{\bullet} \ar@{=}[d]^{\{0,2\}} \\
&*{\bullet}\ar@{-}[r]_{2}  &*{\bullet}\ar@{-}[r]_1  &*{\bullet} \ar@{-}[r]_{2} & *{\bullet}\ar@{=}[r]_{\{1,3\} }&*{\bullet}}
$$
Thus the $\{0,2\}$-edge is not adjacent to a $3$-edge.
If $\mathcal{G}$ contains a $\{0,2\}$-square, then we can determine the components of $G_{1,3}$ in $O_2\setminus\{b\}$: a $\{0,2\}$-edge, a $\{0,2\}$-square and a $2$-edge. Thus, the 3-edges cannot connect the components above. Hence, $G_3$ is transitive, a contradiction.
By the same reason $\mathcal{G}$  has exactly one $\{0,2\}$-edge.
Thus $\rho_0$ is a $2$-transposition.
Note also that if  $\mathcal{G}$ contains a $\{1,3\}$-square then the  $\{0,2\}$-edge must share at least one vertex with that square, otherwise, since $\rho_1$ is a $4$-transposition, the graph is disconnected. As $\rho_3$ is a $2$-transposition, both vertices of the $\{0,2\}$-edge belong to the $\{1,3\}$-square. But in this case $G_3$ is transitive, a contradiction.
 Thus $\mathcal{G}$ does not contain a  $\{1,3\}$-square.
This gives only the four possibilities corresponding to graphs (A1) to (A4) in the appendix, which are not string C-groups, a contradiction.

\end{proof}

\begin{prop}\label{O3}
 $|O_1|\neq 3$  
\end{prop}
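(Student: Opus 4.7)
My plan is to mirror the strategy of Proposition~\ref{O2}: assume $|O_1|=3$ (so $|O_2|=8$) and derive a contradiction by a structural analysis of the subgraph on $O_2$.

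First, I would fix the structure of $A$. Since $A$ acts transitively on three points and is generated by involutions, and the only involution-generated transitive subgroup of $S_3$ is $S_3$ itself, we obtain $A=S_3$ and $|J_A|\geq 2$. As $|O_1|=3$ is prime, $A$ is primitive on $O_1$, so by Proposition~\ref{CCD} the set $J_A$ is an interval, and since $i\notin J_A$ it lies entirely on one side of $i$.

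Second, I would restrict the admissible $(i,J_B)$. No group generated by at most two involutions can be primitive on $8$ points (two involutions generate a dihedral group, and no dihedral group acts primitively on $8$ points), so $|J_B|\geq 3$. Combined with $J_B$ being an interval of labels greater than $i$, this leaves only: for $r=4$, the single case $i=0$, $J_B=\{1,2,3\}$; for $r=5$, the cases $i=0$ with $J_B\in\{\{1,2,3\},\{1,2,3,4\}\}$ and $i=1$ with $J_B=\{2,3,4\}$. In every one of these, the requirement $|J_A|\geq 2$ together with $J_A$ being a single-sided interval in fact forces $J_A$ to lie above $i$ as well (the only labels below $i$ form the empty set or $\{0\}$), so $J_A$ reduces to a short list of intervals inside $\{i+1,\ldots,r-1\}$.

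Third, for each surviving triple $(i,J_A,J_B)$ I would draw the permutation representation subgraph on $O_2$ using Proposition~\ref{path}, Lemma~\ref{Olivia1}, the commuting property, and the parity constraint that every $\rho_j$ is even. The central parity input is that $\rho_i$ contains the odd transposition $(a,b)$, which together with the fact that each non-trivial $\alpha_j$ is a transposition on $O_1$ pins down the parity of $\beta_j$ on $O_2$ (specifically, $\beta_j$ is odd exactly when $j\in J_A$). In addition, since the $i$-split is a bridge not lying inside any square, the edges incident to $b$ are tightly restricted. Each configuration should then either yield a direct contradiction (some $\rho_j$ forced odd, some $G_j$ forced transitive, or $\mathcal{G}$ disconnected) or produce a candidate sggi matching one of the graphs in the appendix, which fails the intersection property by Proposition~\ref{notC}.

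The main obstacle, as in Proposition~\ref{O2}, will be the bookkeeping of squares, double and triple edges in $O_2$ when some $\rho_j$ with $j\in J_A\cap J_B$ contributes simultaneously to both orbits. The $r=5$ sub-cases with $i\in\{0,1\}$ concentrate most of the casework, since both $|J_A|$ and $|J_B|$ can be fairly large, producing many possible incidences around the split.
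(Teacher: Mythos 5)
Your first two stages are sound and run close to the paper's own reduction: the paper likewise uses that $J_A$ and $J_B$ are intervals to force $i=0$ and $J_A=\{1,2\}$ (its phrasing is that $i\neq 0$ would make $A=G_{<i}$ an even string C-group of degree $3$, impossible since such a group is $S_3$), and your primitivity observation that $|J_B|\geq 3$ is a correct, slightly different way of killing the same cases. (Your surviving cases $i=1$, and $r=5$ with $J_B=\{1,2,3\}$, die instantly because they force $\rho_0$, respectively $\rho_4$, to be the identity; your parity claim that $\beta_j$ is odd exactly when $j\in J_A$ is also correct.) The genuine gap is your third stage, which is where the proposition is actually proved and which you leave as a plan. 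Fixing $(i,J_A,J_B)$ does not by itself make the enumeration of subgraphs on the $8$-point orbit $O_2$ tractable or verifiably complete: with squares, double and triple edges permitted and two choices of $J_B$, ``drawing the subgraph on $O_2$'' is exactly the combinatorial explosion you yourself flag as the main obstacle, and you supply no mechanism for taming it or for certifying that the case list closes.

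The paper's mechanism, absent from your proposal, is to classify by the connected components of $\mathcal{G}_{0,1}$ (equivalently the $G_{0,1}$-orbits), in particular by the size of the orbit of $b\rho_1$. Lemmas~\ref{l0} and~\ref{l2} then constrain how $\rho_0$ can act across those components, and in the branch where that orbit has size $7$ the sesqui-extension machinery (Proposition~\ref{sesqui0} and Lemma~\ref{sesqui}) identifies $G_{0,1}\cong S_7$ and forces $r=5$, after which Proposition~\ref{path} pins down the transposition sizes and yields graphs (B1)--(B3); the smaller-orbit branches yield (B4)--(B15), all dispatched by Proposition~\ref{notC}. To complete your argument you would need either to import this orbit-based organization or to replace it with an explicit, demonstrably exhaustive enumeration; as written, the decisive step is a promissory note rather than a proof.
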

\begin{proof} Suppose that $|O_1|=3$. 
In this case $J_A$ and $J_B$ are both intervals.
If $i\neq 0$ then $A=G_{<i}$ and $B=G_{>i}$. 
Moreover $G_{<i}$ is an even string C-group of degree $3$, but a string C-group of degree $3$ is isomorphic to $S_3$, a contradiction.

Thus $i=0$ and $J_A=\{1,2\}$. Moreover the $O_1$ component of $\mathcal{G}_0$ is a path. Let $O_1=\{1,2,3\}$ and  $\{a,b\}=\{3,4\}$ as is the following figure.
\begin{small}
$$\xymatrix@-1.3pc{*+[o][F]{1}  \ar@{-}[rr]^2&& *+[o][F]{2} \ar@{-}[rr]^1&&*+[o][F]{3} \ar@{.}[rr]^0&& *+[o][F]{4} \ar@{-}[rr]^1&&*+[o][F]{5} \ar@{-}[rr]^2&& *+[o][F]{6} \ar@{.}[rr]&&  } $$
\end{small}

Suppose first that $G_{0,1}$ has exactly four orbits: $\{1,2\},\{3\},\{4\}$ and $\{5,\ldots,11\}$.
 In this case $\Gamma_{0,1}$ is a sesqui-extension with respect to $\rho_2$ of string C-group $\Lambda$ acting transitively on $11-4=7$ points.
Moreover, as $(1,2)\notin G_{0,1}$, by Lemma~\ref{sesqui}, $\Lambda$ is a string group representation of a group isomorphic to $G_{0,1}$ and $\Lambda$ must have an index $2$ subgroup (which is the even subgroup of $\Lambda$ of the elements that can be written with an even number of $\rho_2$'s).
Then there is only one possibility which is $G_{0,1}\cong S_7$. In particular $r=5$.

By Proposition~\ref{path}, note that $\rho_3$ and $\rho_4$ must be $2$-transpositions. But then for connectedness of  $\mathcal{G}$, $\rho_2$ must be a $4$-transposition.
Then $\mathcal{G}$ has an $\{2,4\}$-square. This gives the  possibilities (B1), (B2) and (B3) of the appendix which are not string C-groups by Proposition~\ref{notC}.

Now suppose that $G_{0,1}$ has orbits:  $\{1,2\},\{3\},\{4\}$, $\{5,6\}$ and $\{7,\ldots,11\}$.
By Proposition~\ref{path}, $r=4$ and $\mathcal{G}$ must have the following spanning subgraph, where $G_{0,1}\cong D_{20}$.
\begin{small}
$$ \xymatrix@-1.6pc{*+[o][F]{1}  \ar@{-}[rr]^2 && *+[o][F]{2} \ar@{-}[rr]^1&&*+[o][F]{3} \ar@{.}[rr]^0&& *+[o][F]{4} \ar@{-}[rr]^1&&*+[o][F]{5} \ar@{-}[rr]^2&& *+[o][F]{6} \ar@{-}[rr]^1&& *+[o][F]{7} \ar@{-}[rr]^2&&*+[o][F]{8} \ar@{-}[rr]^3&&*+[o][F]{9} \ar@{-}[rr]^2&& *+[o][F]{10} \ar@{-}[rr]^3&&*+[o][F]{11} }$$
\end{small}
Now $\mathcal{G}$ must have precisely one more $1$-edge and at least one more $0$-edge. 
By the commuting property $\rho_0$ must fix the vertices $\{7,8,9,10,11 \}$, so
 there are only two possibilities for the other $0$-edge, $\{1,2\}$ or $\{5,6\}$. 
In addition there are also two possibilities for the other $1$-edge, $\{8,9\}$ or $\{10,11\}$. Hence $\mathcal{G}$ is one of the graphs (B4), (B5), (B6) or (B7) of the appendix.

Now suppose that the orbit of the vertex $5$, in $G_{0,1}$ has more than two points (and less than $7$) then $\rho_3$ does not fix the vertex $6$.
If both $\rho_1$ and $\rho_3$ act non-trivially on the vertex $6$ then there is either a $\{1,3 \}$-square or a double $\{1,3\}$-edge. In total this yields six possibilities for $\mathcal{G}$, the graphs (B8)-(B13) of the appendix.

If $\rho_1$ fixes the vertex $6$, then we get the graphs  (B14) and (B15) of the appendix.

Thus, in every case, $\Gamma$ is not a string C-group by Proposition~\ref{notC}, a contradiction. 
 
\end{proof}


\begin{prop}\label{O4}
$|O_1|\neq 4$  
\end{prop}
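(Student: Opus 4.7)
The plan is to mirror the approach of Propositions~\ref{O2} and \ref{O3}. When $|O_1|=4$, we have $|O_2|=7$; since $7$ is prime, $B$ is primitive on $O_2$, so by Proposition~\ref{CCD} the set $J_B$ is an interval and by duality I may assume all of its labels exceed $i$. Each $\alpha_j$ with $j\in J_A$ has the same cycle parity on $O_1$ as $\beta_j$ does on $O_2$, because $\rho_j$ is an even permutation on eleven points; for $j=i$ the parities of $\alpha_i$ and $\beta_i$ are opposite, since the split transposition $(a,b)$ is odd.

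I would first determine the possible shapes of $A$. Because $A$ acts transitively on four points via involutions inheriting the string commuting property, and because $A_4$ is not generated by its involutions, the only candidates are $A\cong V_4$, $A\cong D_8$, or $A\cong S_4$. Then I would split on $i$. If $i\neq 0$, every label $j<i$ lies in $J_A\setminus J_B$, so $\rho_j=\alpha_j$ is supported entirely on $O_1$; parity forces each such $\alpha_j$ to be a double transposition, confining $\langle\alpha_0,\ldots,\alpha_{i-1}\rangle$ inside $V_4$. The bound $r\in\{4,5\}$ leaves very little room for this prefix to combine with a primitive seven-point $B$ without forcing an odd generator $\rho_j$ for some $j>i$, and these parity contradictions dispose of $i\neq 0$.

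For $i=0$, I would use Proposition~\ref{path} to measure the distance on $O_2$ from the split vertex $b$ to the first edge carrying each label in $J_B$, exploiting primitivity of $B$ on seven points to constrain the subgraph there. Combined with the internal structure on the four-point component $O_1$—which may now contain $\{j,k\}$-squares or multiple edges internal to $O_1$—this yields a finite list of candidate sggi's, each of which either exhibits an odd generator or coincides with a graph in the appendix and so fails to be a string C-group by Proposition~\ref{notC}. The main obstacle, compared with $|O_1|\in\{2,3\}$, is that $A$ is no longer automatically primitive and may be any of three non-isomorphic groups, so the substructure on $O_1$ can be strictly richer than a simple path; this expands the enumeration considerably, and the crux of the bookkeeping is to track parities together with the commuting property to prune the list efficiently while confirming that the surviving graphs are precisely those treated in the appendix.
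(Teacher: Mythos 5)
Your plan follows the same route as the paper's proof: classify the possible actions on the four-point orbit, split on the position of $i$, use parity together with Proposition~\ref{path} to force a finite list of graphs, and dispose of the survivors via Proposition~\ref{notC}. Your identification of the candidates for $A$ as $V_4$, $D_8$ or $S_4$ matches the paper's enumeration of the restrictions of $G_i$ to $O_1$ (the $\{0,2\}$-square when $J_A$ is not an interval, forcing $i=1$, and the three four-vertex paths when $i=0$). However, as written the argument has two genuine gaps.

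First, the case $i\neq 0$ is not actually eliminated by the parity observations you invoke. The configuration that survives your screen is $i=1$ with $J_A=\{0,2\}$ and $A$ acting as a $\{0,2\}$-square on $O_1$: there $\rho_0=\alpha_0$ is an even $2$-transposition and $\rho_2=\alpha_2\beta_2$ can be even with $\beta_2$ a $2$-transposition on $O_2$, so no generator is forced odd at the outset. The paper needs two further arguments here, neither of which appears in your sketch: (i) a reduction to $r=4$, obtained by observing that if $r=5$ then $4\in J_B\setminus J_A$ forces $\rho_4$ to be a $2$-transposition inside $O_2$, and Proposition~\ref{path} then forces a path in the seven-point orbit through both $4$-edges carrying all labels of $\{i+1,\ldots,3\}$, which either exceeds seven vertices or makes $\rho_3$ (or $\rho_1$) odd; and (ii) the observation that, once $r=4$, the group $B=\langle\beta_2,\beta_3\rangle$ must be transitive, hence dihedral and transitive on seven points, so $\beta_2$ and $\beta_3$ are $3$-transpositions and $\rho_2,\rho_3$ are odd. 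Your phrase ``very little room'' stands in for both of these steps. Second, for $i=0$ the enumeration that constitutes the bulk of the proof --- producing graphs (C1)--(C6) of the appendix together with two further graphs that must be excluded by exhibiting a $3$-split with a small orbit and appealing to Proposition~\ref{O2} --- is only described, not carried out; in particular the assertion that every surviving graph ``coincides with a graph in the appendix'' is exactly the content that has to be verified. In short, the proposal is a correct and well-aimed plan, but it defers the two hardest steps rather than proving them.
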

\begin{proof}
Suppose $|O_1|=4$. Now suppose that $J_A$ is not an interval.  In this case $A$ is imprimitive with two blocks of size $2$. As only edges with labels $i\pm 1$ are incident to $a$, and remembering that $J_B$ is an interval with labels greater than $i$, there are only two possibilities either $J_A=\{0,2\}$ or $J_A=\{0,2,3\}$. In any case $i=1$ and the permutation representation  of $G_1$ restricted to $O_1$ is one of the following graphs.
\begin{small}
$$\xymatrix@-1.6pc{*{\bullet} \ar@{-}[rr]^0 \ar@{-}[d]_2&& *+[o][F]{a} \ar@{-}[d]^2 \\
*{\bullet} \ar@{-}[rr]_0&& *{\bullet} }\quad \quad \xymatrix@-1.6pc{*{\bullet} \ar@{-}[rr]^0 \ar@{-}[d]_2&& *+[o][F]{a} \ar@{-}[d]^2 \\
*{\bullet} \ar@{=}[rr]_{\{0,3\}}&& *{\bullet} }$$
\end{small}
Recalling that, by Proposition~\ref{B1}, $J_B$ is an interval, we may exclude the second possibility, for otherwise $G_0$ is transitive.
Thus we have to consider the case $J_A=\{0,2\}$ with $A$ having the permutation representation graph on the left.

Now, if $J_A$ is an interval then $i=0$ and there are only the following three possibilities for the permutation representation of $G_0$ in the orbit $O_1$.
\begin{small}
$$\xymatrix@-1.5pc{*{\bullet} \ar@{-}[rr]^1&& *{\bullet}\ar@{-}[rr]^2 &&*{\bullet} \ar@{-}[rr]^1&& *+[o][F]{a} }\quad \xymatrix@-1.5pc{*{\bullet} \ar@{=}[rr]^{\{1,3\}}&& *{\bullet}\ar@{-}[rr]^2&& *{\bullet} \ar@{-}[rr]^1&& *+[o][F]{a} }\quad  \xymatrix@-1.5pc{*{\bullet} \ar@{-}[rr]^3&& *{\bullet}\ar@{-}[rr]^2&& *{\bullet} \ar@{-}[rr]^1&& *+[o][F]{a} }$$
\end{small}
Hence, we have to consider the three cases above  (when $i=0$) plus the  $\{0,2\}$-square when $i=1$. Let us prove that in any case $r=4$.
Suppose that $r>4$. Then $4\in J_B$ and the labelling set of any path from $b$ to a $4$-edge must contain the set  $\{i+1,\ldots,3\}$.
As $4\notin J_A$ and $|O_2|=7$, $\mathcal{G}$ must contain one of the following two paths with $7$ vertices of $O_2$ having at least two 4-edges. In the first $i=1$ and the second $i=0$.
\begin{small}
$$\xymatrix@-1.6pc{*+[o][F]{b}  \ar@{-}[rr]^2&& *{\bullet}\ar@{-}[rr]^3 &&*{\bullet} \ar@{-}[rr]^4&& *{\bullet}\ar@{-}[rr]^3&& *{\bullet}\ar@{-}[rr]^4&& *{\bullet} \ar@{-}[rr]^3&& *{\bullet}}\quad\xymatrix@-1.6pc{*+[o][F]{b}  \ar@{-}[rr]^1&& *{\bullet}\ar@{-}[rr]^2 &&*{\bullet} \ar@{-}[rr]^3&& *{\bullet}\ar@{-}[rr]^4&& *{\bullet}\ar@{-}[rr]^3&& *{\bullet} \ar@{-}[rr]^4&& *{\bullet}}
 $$
 \end{small}
 In the first case $\rho_3$ is odd. In the second case $\rho_0$ fixes $O_2$ pointwisely, hence it must act non-trivially on $O_1$. Thus we must have $J_A=\{1,2\}$, but then $\rho_1$ is odd, a contradiction.
 Thus $r=4$.
 
 We now deal separately with the cases $J_A=\{0,2\}$,  $J_A=\{1,2\}$ and $J_A=\{1,2,3\}$.

 If $J_A=\{0,2\}$ then $\langle \rho_2,\rho_3\rangle$ must be transitive in $O_2$, which has size $7$. This implies that both $\rho_2$ and $\rho_3$ are odd, a contradiction.

If  $J_A=\{1,2\}$ then $\rho_3$ fixes $O_1$, and the commuting property forces  a path starting at the vertex $b$ and  containing two $3$-edges,  to have exactly  $6$ vertices. Having in mind that $G$ is even, we find the sggi which is the permutation graph (C1) of the appendix and the following two permutation representation graphs.
\begin{small}
$$\xymatrix@-1.6pc{*{\bullet} \ar@{-}[rr]^1&& *{\bullet}\ar@{-}[rr]^2 &&*{\bullet} \ar@{-}[rr]^1&&*{\bullet} \ar@{-}[rr]^0&&*{\bullet} \ar@{-}[rr]^1&& *{\bullet}\ar@{-}[rr]^2 \ar@{-}[d]^0&& *{\bullet}\ar@{-}[rr]^3 \ar@{-}[d]^0&& *{\bullet}\ar@{=}[d]^{\{0,2\}}\\
&& && &&  && &&*{\bullet} \ar@{-}[rr]_2&& *{\bullet}\ar@{=}[rr]_{\{3,1\}} &&*{\bullet}}
 $$

  $$\xymatrix@-1.6pc{*{\bullet} \ar@{-}[rr]^1 && *{\bullet}\ar@{=}[rr]^{\{0,2\}}&&*{\bullet}\ar@{-}[rr]^1&& *{\bullet}\ar@{-}[rr]^0&&*{\bullet}\ar@{-}[rr]^1&& *{\bullet}\ar@{-}[rr]^2&& *{\bullet}\ar@{=}[rr]^{\{1,3\}}&&*{\bullet}\ar@{-}[rr]^2&&*{\bullet}\ar@{-}[rr]^3&& *{\bullet}\ar@{-}[rr]^2&&*{\bullet}}$$
  \end{small}
 If $\mathcal{G}$ is the first  graph $G_3$ is transitive, a contradiction.
 If  $\mathcal{G}$  is the second graph above then it has a split with label $3$ and $G_3$ has an orbit of size two, contradicting Proposition~\ref{O2}.

If $J_A=\{1,2,3\}$ then $\rho_3$ will swap exactly one pair of vertices of $O_2$. Indeed since $i=0$ and $|O_2|=7$, $\rho_3$ cannot swap three pairs of vertices of $O_2$.
Consider the minimal path, in $\mathcal{G}$, starting at $b$ and containing the $3$-edge of $O_2$.
This path must  have  $4$ or $6$ vertices, thanks to the commuting property. 
If it has $4$ vertices we get the sggi  (C2), (C3)  or (C4) of the appendix. 
 If it has $6$ vertices then we get either the  sggi (C5) or (C6) of the appendix,
or the following graph which may be dismissed by Proposition~\ref{O2} since it has a $3$-split with one orbit of size two.
\begin{small}
  $$\xymatrix@-1.8pc{*{\bullet} \ar@{=}[rr]^{\{1,3\}}&& *{\bullet}\ar@{-}[rr]^2&&*{\bullet}\ar@{-}[rr]^1&& *{\bullet}\ar@{-}[rr]^0&&*{\bullet}\ar@{-}[rr]^1&& *{\bullet}\ar@{=}[rr]^{\{0,2\}}&& *{\bullet}\ar@{-}[rr]^1&&*{\bullet}\ar@{-}[rr]^2&&*{\bullet}\ar@{-}[rr]^3&& *{\bullet}\ar@{-}[rr]^2&&*{\bullet}}$$ 
\end{small}
The sggis of the appendix do not satisfy the intersection property and so, in all of the remaining cases, $\Gamma$ is not a string C-group by Proposition~\ref{notC}, a contradiction.

\end{proof}
 
\begin{prop}\label{O5}
$|O_1|\neq 5$ 
\end{prop}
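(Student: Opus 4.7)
The plan is to assume $|O_1|=5$ and derive a contradiction. Then $|O_2|=6$, and by Proposition~\ref{B5} the group $B$ is primitive on $O_2$. Since $|O_1|=5$ is prime, $A$ is also primitive on $O_1$, so by Proposition~\ref{CCD} applied to both sides, $J_A$ and $J_B$ are intervals; without loss of generality every label of $J_B$ exceeds $i$.

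I would then split according to the position of the split. If $i\neq 0$, then by connectedness of $\mathcal{G}$ together with Proposition~\ref{path} we need $i-1\in J_A$, and since $J_A$ is an interval whose labels are at most $i-1$, the group $A$ is a primitive string group on 5 points generated by the $\alpha_j$ with $j\in J_A\subseteq\{0,\ldots,i-1\}$. If instead $i=0$, then $J_A\subseteq\{1,\ldots,r-1\}$ is an interval which can overlap $J_B$, and the $O_1$-component of $\mathcal{G}_0$ takes one of only a few shapes.

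Next I would exploit the parity constraint $G\leq A_{11}$. For every $j\in J_B\setminus J_A$ we have $\rho_j=\beta_j$ is an even involution on the 6 points of $O_2$, hence a $2$-transposition (since non-trivial even involutions on $6$ points have exactly two transpositions). Combined with $|J_B|\leq r-1\leq 4$, the commuting property, Lemma~\ref{Olivia1}, and the fact that the split $\{a,b\}$ does not lie in a square, one obtains a short list of candidate configurations for the $O_2$-side of $\mathcal{G}$. A symmetric analysis restricts the $O_1$-side, keeping in mind the standing assumption that every $G_j$ is intransitive.

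The main obstacle will be keeping the case analysis compact. As in the proofs of Propositions~\ref{O3} and~\ref{O4}, most candidate graphs are eliminated either because they force an odd generator (contradicting $G\leq A_{11}$), because they force some $G_j$ to be transitive, or because they violate Proposition~\ref{path}. The residual graphs should coincide with certain sggi's listed in the appendix, which are not string C-groups by Proposition~\ref{notC}, yielding the required contradiction.
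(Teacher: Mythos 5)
Your overall strategy coincides with the paper's: reduce to $i=0$, use parity and the commuting property to enumerate the possible graphs, and kill the survivors with Proposition~\ref{notC}. However, as written the proposal leaves the two decisive reductions unproved, and these are precisely the steps that require an actual argument rather than routine graph enumeration.

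First, in the case $i\neq 0$ you correctly observe that $J_A\subseteq\{0,\ldots,i-1\}$ and that $A$ is a primitive string group on the $5$ points of $O_1$, but you stop there without a contradiction. The point is that $A=G_{<i}$ and $B=G_{>i}$ must each be transitive on their orbit, so each needs at least two generators; with $r\leq 5$ this forces $r=5$, $i=2$, and both $A$ and $B$ dihedral on $5$ and $6$ points respectively. A dihedral group generated by two involutions acting transitively on $6$ points must contain an odd generator, contradicting $G\leq A_{11}$. Without some such argument the case $i\neq 0$ is simply open. Second, once $i=0$ you never pin down the rank. The paper shows that $r=5$ is impossible because Proposition~\ref{path} forces $\mathcal{G}$ to contain an alternating path on $10$ vertices with label sequence $4,3,2,1,0,1,2,3,4$, which leaves no room for a second $0$-edge and hence makes $\rho_0$ odd; only then does the enumeration take place with $r=4$. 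Your plan of listing ``candidate configurations'' with $|J_B|\leq 4$ would have to handle rank $5$ as well, and the $2$-transposition observation alone does not dispose of it.

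Your parity observation (that $\rho_j$ restricted to $O_2$ is a $2$-transposition for $j\in J_B\setminus J_A$) is correct and consistent with what the paper uses implicitly, and the final appeal to the appendix and Proposition~\ref{notC} is the right closing move; note also that the paper additionally recycles Propositions~\ref{O2} and~\ref{O3} to discard configurations containing a $3$-split with a small orbit, which shortens the list you would otherwise have to check by hand.
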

\begin{proof}
Since $|O_1|=5$, then $A$ is primitive and thus $J_A$ is an interval. Suppose $i\neq 0$. Then since $J_B$ only has labels greater than $i$, all the labels of $J_A$ are all smaller than $i$. Therefore $A=G_{<i}$ and $B=G_{>i}$. This implies that $r=5$ and the groups $A$ and $B$ must be dihedral. But $D_{12}$ is odd, a contradiction. Consequently $i=0$.
Now if the rank is $5$ then the permutation graph of $\Gamma$ contains the following path of size $10$.
\begin{small}
$$\xymatrix@-1.8pc{*{\bullet}\ar@{-}[rr]^4&& *{\bullet}\ar@{-}[rr]^3&& *{\bullet}\ar@{-}[rr]^2&&*{\bullet}\ar@{-}[rr]^1&& *{\bullet}\ar@{-}[rr]^0&& *{\bullet}\ar@{-}[rr]^1&&*{\bullet}\ar@{-}[rr]^2&& *{\bullet}\ar@{-}[rr]^3&& *{\bullet}\ar@{-}[rr]^4&& *{\bullet}}$$
\end{small}
This graph cannot be a subgraph of $\mathcal{G}$, as this forces $\rho_0$ to be odd.
Thus $r\leq 4$. As $G$, is even $r=4$.

Suppose first that $\rho_3$ fixes $O_1$  pointwisely. Then we get the following permutation representation graphs of $\mathcal{G}_0$.
\begin{small}
$$\xymatrix@-1.8pc{*{\bullet}\ar@{-}[rr]^2&& *{\bullet}\ar@{-}[rr]^1&& *{\bullet}\ar@{-}[rr]^2&&*{\bullet}\ar@{-}[rr]^1&& *+[o][F]{a} && *+[o][F]{b} \ar@{-}[rr]^1&&*{\bullet}\ar@{-}[rr]^2&& *{\bullet}\ar@{-}[rr]^3&& *{\bullet}\ar@{-}[rr]^2&&*{\bullet}\ar@{=}[rr]^{\{1,3\}}&& *{\bullet}}$$
\end{small}
or
\begin{small}
$$\xymatrix@-1.8pc{*{\bullet}\ar@{-}[rr]^2&& *{\bullet}\ar@{-}[rr]^1&& *{\bullet}\ar@{-}[rr]^2&&*{\bullet}\ar@{-}[rr]^1&& *+[o][F]{a} && *+[o][F]{b} \ar@{-}[rr]^1&&*{\bullet}\ar@{-}[rr]^2&& *{\bullet}\ar@{=}[rr]^{\{1,3\}}&& *{\bullet}\ar@{-}[rr]^2&&*{\bullet}\ar@{-}[rr]^3&& *{\bullet}}$$
\end{small}
In both cases $3$ is the label of a split and $G_3$ has one orbit of size $3$ or $1$, respectively, and so by Proposition~\ref{O3} we may exclude the first of these graphs.
From the second graph we get the graphs (D1) and (D2) of the appendix.

Now consider that $\rho_3$ has a non-trivial action in $O_1$. In this case  $\Gamma_0$ has one of the following permutation representation graphs, giving graphs (D3) and (D4) of the appendix.
\begin{small}
\[\begin{array}{c}
\xymatrix@-1.8pc{*{\bullet}\ar@{-}[rr]^2&& *{\bullet}\ar@{=}[rr]^{\{1,3\}}&& *{\bullet}\ar@{-}[rr]^2&&*{\bullet}\ar@{-}[rr]^1&& *+[o][F]{a} && *+[o][F]{b} \ar@{-}[rr]^1&&*{\bullet}\ar@{-}[rr]^2&& *{\bullet}\ar@{-}[rr]^3&& *{\bullet}\ar@{-}[rr]^2&&*{\bullet}\ar@{-}[rr]^1&& *{\bullet}}\\
\xymatrix@-1.8pc{*{\bullet}\ar@{-}[rr]^2&& *{\bullet}\ar@{-}[rr]^3&& *{\bullet}\ar@{-}[rr]^2&&*{\bullet}\ar@{-}[rr]^1&& *+[o][F]{a} && *+[o][F]{b} \ar@{-}[rr]^1&&*{\bullet}\ar@{-}[rr]^2&& *{\bullet}\ar@{-}[rr]^1&& *{\bullet}\ar@{-}[rr]^2&&*{\bullet}\ar@{=}[rr]^{\{1,3\}}&& *{\bullet}}\\
\xymatrix@-1.8pc{*{\bullet}\ar@{-}[rr]^2&& *{\bullet}\ar@{=}[rr]^{\{1,3\}}&& *{\bullet}\ar@{-}[rr]^2&&*{\bullet}\ar@{-}[rr]^1&& *+[o][F]{a} && *+[o][F]{b} \ar@{-}[rr]^1&&*{\bullet}\ar@{-}[rr]^2&& *{\bullet}\ar@{-}[rr]^1&& *{\bullet}\ar@{-}[rr]^2&&*{\bullet}\ar@{-}[rr]^3&& *{\bullet}}\\
\xymatrix@-1.8pc{*{\bullet}\ar@{-}[rr]^2&& *{\bullet}\ar@{-}[rr]^3&& *{\bullet}\ar@{-}[rr]^2&&*{\bullet}\ar@{-}[rr]^1&& *+[o][F]{a} && *+[o][F]{b} \ar@{-}[rr]^1&&*{\bullet}\ar@{-}[rr]^2&& *{\bullet}\ar@{=}[rr]^{\{1,3\}}&& *{\bullet}\ar@{-}[rr]^2&&*{\bullet}\ar@{-}[rr]^1&& *{\bullet}}
\end{array}\]
\end{small}
As before, there is a 3-split for the first, second and fourth graphs, where $G_3$ has either an orbit of size two or three.
By Propositions~\ref{O2} and~\ref{O3}  all the possibilities for $\mathcal{G}$ are given in the appendix. In any case we have a contradiction with the intersection property by Proposition~\ref{notC}.

\end{proof}

The case where the connected components of $\mathcal{G}_i$ are nontrivial is now completed and the conclusion is the following.

\begin{prop}\label{all}
Let $\Gamma$ be an even string C-group of degree $11$ and rank $r\in\{3,4,5\}$. 
Suppose that $\Gamma$ has a fracture graph.
If $i$ is the label of a split, then $G_i$ has one trivial orbit. 
\end{prop}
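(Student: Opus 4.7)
The plan is to recognize that Proposition~\ref{all} is essentially the synthesis statement for Subsection~\ref{even11split}, packaging together the case-by-case analysis that has just been carried out under the standing assumption ``$\mathcal{G}_i$ has two nontrivial components''. Since the split $\{a,b\}$ is a bridge, $G_i$ has exactly two orbits $O_1$ and $O_2$, whose sizes sum to $11$. Assuming without loss of generality that $|O_1|\le|O_2|$, the possible values of $|O_1|$ are $1,2,3,4,5$.

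I would then dispose of the cases $|O_1|\ge 2$ one at a time by direct appeal to the propositions already proved. If $|O_1|=2$, both components of $\mathcal{G}_i$ are nontrivial and the hypotheses of Subsection~\ref{even11split} apply; Proposition~\ref{B1} rules out both $A$ and $B$ being imprimitive, Propositions~\ref{B2}, \ref{B3}, \ref{B5} force the action on the larger orbit to be primitive whenever $|O_1|\in\{2,3,5\}$, and Propositions~\ref{O2}, \ref{O3}, \ref{O4}, \ref{O5} then respectively rule out $|O_1|=2,3,4,5$. (For $|O_1|=4$ we also use that $|O_2|=7$ is prime, so $B$ is automatically primitive.) Hence the only surviving possibility is $|O_1|=1$, which is precisely the statement that $G_i$ has one trivial orbit.

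Thus the proof reduces to a short one-paragraph argument citing Propositions~\ref{O2}--\ref{O5} and the arithmetic constraint $|O_1|+|O_2|=11$. There is no real obstacle remaining: the genuinely hard work was carried out in those four propositions (and the auxiliary Propositions~\ref{B1}--\ref{B5} showing that $B$ must be primitive), each of which required a careful enumeration of permutation representation graphs and repeated use of the parity, commuting and path (Proposition~\ref{path}) constraints. Proposition~\ref{all} itself is the clean conclusion that will feed into Section~\ref{2fracture11}, where only the complementary case $|O_1|=1$ needs to be analysed.
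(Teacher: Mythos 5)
Your proposal is correct and matches the paper's proof exactly: the paper states Proposition~\ref{all} as an immediate consequence of Propositions~\ref{O2}, \ref{O3}, \ref{O4} and \ref{O5}, which together with $|O_1|+|O_2|=11$ leave only $|O_1|=1$. Your additional remarks about the role of Propositions~\ref{B1}--\ref{B5} and the primality of $7$ when $|O_1|=4$ accurately reflect how the paper sets up those four propositions.
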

\begin{proof}
This is a consequence of Propositions~\ref{O2}, \ref{O3}, \ref{O4} and \ref{O5}.
\end{proof}


\subsection{Case: $\mathcal{G}_i$ has a trivial component. }

\begin{prop}
If $|O_1|=1$ then $B$ is primitive and $i\in\{0,r-1\}$.
\end{prop}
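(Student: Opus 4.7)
The plan is to prove both conclusions by contradiction, exploiting the commutativity induced by the string condition together with the parity constraint $G\leq A_{11}$. For the index condition, suppose $0<i<r-1$ and set $H:=G_{<i}$, $K:=G_{>i}$. Every generator of $H$ lies at distance $\geq 2$ from every generator of $K$, so $[H,K]=1$. Both $H$ and $K$ fix $a$ and hence act on $O_2$, while $HK=G_i$ acts transitively there. Commutativity then forces $K$ to permute the $H$-orbits on $O_2$ transitively, so all $H$-orbits share a common size $n_H$ with $n_Hn_K=10$, yielding a transverse $n_H\times n_K$ grid on $O_2$ in which each $H$-orbit meets each $K$-orbit in exactly one point. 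The case analysis is then short. Whenever one of $H,K$ is $\langle\rho_j\rangle\cong C_2$ (namely $r=4$ with $i\in\{1,2\}$, or $r=5$ with $i\in\{1,3\}$) the common orbit-size must be $2$, so $\rho_j$ is a product of five disjoint transpositions on $O_2$, odd on $11$ points, contradicting $G\leq A_{11}$. The remaining case $r=5$, $i=2$ (both $H,K$ rank-$2$ dihedral) admits grid orientations $2\times 5$ or $5\times 2$; classifying the five size-$2$ $H$-orbits by ``which of $\rho_0,\rho_1$ swaps the orbit'' gives three possible types, and the commuting transitive $K$-action on those five orbits forces them all to share a single type, whence either one of $\rho_0,\rho_1$ is trivial or both are $5$-transpositions.

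For primitivity, assume $i=0$ (the case $i=r-1$ is dual), so that $B=G_0$ is transitive on $O_2$. A key preliminary observation is that $\rho_j(b)=b$ for every $j\geq 2$: from $\rho_j\rho_0(a)=\rho_0\rho_j(a)=\rho_0(a)=b$ one reads off $\rho_j(b)=b$. Suppose that $B$ preserves a nontrivial block system on $O_2$; the blocks have size $2$ or $5$. In the size-$5$ case, every generator of $G_{0,1}=\langle\rho_2,\ldots,\rho_{r-1}\rangle$ fixes $b$ and therefore preserves the block $\Delta_b$ containing $b$, so the transitivity of $B$ on the two blocks must come from $\rho_1$; but then $\rho_1$ restricted to $O_2$ is a block-swap $\Delta_b\to\Delta_b'$ consisting of exactly five disjoint transpositions, which (together with $\rho_1(a)=a$) makes $\rho_1$ odd on $11$ points, a contradiction. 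In the size-$2$ case with $\Delta_b=\{b,d\}$, each $\rho_j$ with $j\geq 2$ fixes $b$ and preserves $\Delta_b$, hence fixes $d$; the edge $\{b,\rho_1(b)\}$ is then a bridge of $\mathcal{G}$ (removing it isolates $\{a,b\}$) and thus a $1$-split, with associated small orbit $\{a,b\}$ of size $2$. This $1$-split satisfies the hypotheses of Propositions~\ref{B2} and~\ref{O2} (applied to it in place of the $i$-split), producing the final contradiction.

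The step I expect to be most delicate is the case $r=5$, $i=2$ of the first assertion, where both $H$ and $K$ are genuine rank-$2$ dihedral groups; the careful type bookkeeping on the five orbits of the smaller grid-factor, and then dualising for the other orientation, is where errors would most easily creep in. By contrast, the primitivity step is relatively clean once the observation $\rho_j(b)=b$ for $j\geq 2$ is in hand, with the block-size-$5$ subcase dispatched almost immediately by parity and the block-size-$2$ subcase reduced to previously established classifications via the induced $1$-split.
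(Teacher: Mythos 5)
Your overall strategy (parity constraints combined with the commutation forced by the string condition) is the same engine the paper uses, and several of your steps are sound: the dismissal of the cases where one of $G_{<i}$, $G_{>i}$ equals $\langle\rho_j\rangle\cong C_2$, and the block-size-$5$ subcase of the primitivity step, essentially reproduce the paper's reasoning. But there are two genuine gaps. First, the ``grid'' claim $n_Hn_K=10$ does not follow from $[H,K]=1$ together with transitivity of $HK$ (two commuting groups need not meet each other's orbits in single points; two equal regular groups already violate it), and you invoke it exactly where it matters: in the case $r=5$, $i=2$ it silently excludes the configuration in which \emph{both} $G_{<2}$ and $G_{>2}$ have two orbits of size $5$ on $O_2$. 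That $(5,5)$ configuration is precisely the residual case the paper works hardest on: there one shows that $\rho_3$ and $\rho_4$ must each swap the two $G_{<2}$-orbits (an involution centralizing a group acting transitively on a block of odd size cannot preserve that block without acting trivially on it), whence $G_3$ and $G_4$ are transitive, contradicting the existence of a fracture graph. Your type-counting correctly kills $n_H=2$ (and dually $n_K=2$), but the $(5,5)$ case needs its own argument.

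Second, in the size-$2$-block subcase of the primitivity step, the inference ``bridge $\Rightarrow$ $1$-split'' is false. A $1$-split must be the \emph{unique} $1$-edge joining distinct $G_1$-orbits, whereas $\{b,\rho_1(b)\}$ is a bridge isolating $\{a,b\}$ in \emph{every} configuration with a $0$-split and $|O_1|=1$ --- including the $11$-cell itself, where $G_1$ has three orbits and three $1$-edges cross between them, so this edge lies in no forced position and is not a split. Tellingly, your observation that $\rho_j(d)=d$ for $j\geq 2$ (the only point at which the imprimitivity hypothesis enters) is never used afterwards, so this subcase is not actually closed; if your argument worked as written it would also rule out the $11$-cell. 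The paper closes the subcase differently: the five blocks of size $2$ are forced by the commuting property into a labelled path, $\rho_0=(a,b)\beta_0$ must act as an odd permutation $\beta_0$ on $O_2$, and one checks that no odd number of $0$-transpositions can be placed compatibly with that block picture. You would need an argument of this kind, or a correct derivation of a genuine split, to finish.
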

\begin{proof}
Suppose that $B$ is embedded into $S_{k_2}\wr S_{m_2}$ with $k_2m_2=10$, $k_2,m_2>1$. If $m_2=2$ then the permutation swapping the blocks is a $5$-transposition, a contradiction. Hence $k_2=2$ and $m_2=5$.
First suppose that $i\not\in \{0,r-1\}$.  In that case $B=G_{i}=G_{<i}\times G_{>i}$.
 If $G_{>i}$ is transitive on $O_2$, as $\rho_0$ centralizes $G_{>i}$, $\rho_0$ is fixed-point-free on $O_2$. 
Then $\rho_0$ is a $5$-transposition, a contradiction.  Thus  $G_{>i}$ is intransitive and, by the same argument,  $G_{<i}$ is intransitive.
If either $G_{<i}$ or $G_{>i}$ is a cyclic group, then we have the same contradiction as before.
Hence both groups have two orbits of size $5$. As neither $G_{<i}$ nor $G_{>i}$ is cyclic, we have that $i\notin\{1,r-2\}$.
Consider the blocks of size $5$ corresponding to the $G_{<i}$-orbits.
Then as $\rho_{i+1}$ centralizes $G_{<i}$, it cannot fix the blocks which have odd size, hence \st{both} $\rho_{i+1}$ swaps the blocks.
Similarly $\rho_{i+2}$ swaps the blocks.
Hence $G_{i+1}$ and $G_{i+2}$ are transitive, a contradiction.
Consequently $i\in\{0,r-1\}$.

Without loss of generality lets assume that $i=0$.
Note that a $0$-split $\{a,b\}$ does not belong to a  square, therefore $G_{>1}$ fixes $b$ and $G_{>2}$ fixes $b\rho_1$.
Hence we get the following possibilities for the graph representing the block action of $G_0$.
\begin{small}
\[\begin{array}{c}
(1)\,\xymatrix@-0.8pc{  *+[][F]{b} \ar@{-}[r]^1 & *+[][F]{}  \ar@{-}[r]^2 & *+[][F]{} \ar@{-}[r]^1 & *+[][F]{} \ar@{-}[r]^2 & *+[][F]{}}
\\
(2)\,\xymatrix@-0.8pc{  *+[][F]{b} \ar@{-}[r]^1 & *+[][F]{}  \ar@{-}[r]^2 & *+[][F]{} \ar@{-}[r]^3 & *+[][F]{} \ar@{-}[r]^2 & *+[][F]{}}
\\
(3) \xymatrix@-0.8pc{  *+[][F]{b} \ar@{-}[r]^1 & *+[][F]{}  \ar@{-}[r]^2 & *+[][F]{} \ar@{-}[r]^3 & *+[][F]{} \ar@{-}[r]^4 & *+[][F]{}}
\end{array}
\]
\end{small}
But $\rho_0$ must have a non-trivial action on the orbit of size $10$. If $\rho_0$ permutes two vertices in a block, then it permutes another pair of vertices in an adjacent block, which forces $\rho_0$ to be odd.
Thus $\rho_0$ swaps a pair of vertices in different blocks. This is only possible when the block action is as in (1) or (2), corresponding to the following permutation graphs of $G_0$ for $O_2$.
\begin{small}
$$\xymatrix@-1.3pc{  *+[o][F]{b} \ar@{-}[r]^1 & *{\bullet} \ar@{-}[r]^2 & *{\bullet}\ar@{-}[r]^1  \ar@{-}[d]^3& *{\bullet}\ar@{-}[d]^3\ar@{-}[r]^2 & *{\bullet}&&&& *+[o][F]{b} \ar@{-}[r]^1 & *{\bullet} \ar@{-}[r]^2 & *{\bullet}\ar@{-}[r]^3  \ar@{-}[d]^1& *{\bullet}\ar@{-}[d]^1 \ar@{-}[r]^2 & *{\bullet}\\
 *{\bullet}\ar@{-}[r]_1 & *{\bullet} \ar@{-}[r]_2 & *{\bullet}\ar@{-}[r]_1 & *{\bullet}\ar@{-}[r]_2  & *{\bullet}&&&&  *{\bullet}\ar@{-}[r]_1 & *{\bullet} \ar@{-}[r]_2 & *{\bullet}\ar@{-}[r]_3 & *{\bullet}\ar@{-}[r]_2  & *{\bullet}}$$
 \end{small}
By the commuting property it is impossible to place an odd number of $0$-edges into either of the above diagrams.
 Thus $B$ is primitive. By Proposition~ \ref{CCD} $J_B$ must be an interval, hence $i\in\{0,r-1\}$.
 \end{proof}

\begin{prop}\label{not2split}
Let $\Gamma$ be an even string C-group of degree $11$ and rank $r\leq 5$. 
If $\Gamma$ has a fracture graph then either $\mathcal{G}_0$  or  $\mathcal{G}_{r-1}$ has a $2$-fracture graph.
\end{prop}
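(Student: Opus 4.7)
By Proposition~\ref{all} and the proposition immediately preceding this one, every split of $\mathcal{G}$ has label in $\{0, r-1\}$, with the corresponding maximal parabolic having a trivial orbit. Because $\Gamma$ has a fracture graph, each $G_j$ is intransitive, and this intransitivity is inherited by $G_{0,j}\le G_j$ and by $G_{j,r-1}\le G_j$. Consequently both $\Gamma_0$ and $\Gamma_{r-1}$ automatically admit fracture graphs, so the task reduces to exhibiting, for at least one of $\mathcal{G}_0$ and $\mathcal{G}_{r-1}$, at least two $j$-edges between distinct orbits of the corresponding stabiliser for every relevant label $j$.

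The plan is to split into cases by the labels of the splits of $\mathcal{G}$. First I would handle the ``easy'' configurations. Suppose $\mathcal{G}$ has no $(r-1)$-split (this covers both the no-split case and the only-0-split case). Then for every $j\in\{1,\ldots,r-1\}$ there are at least two $j$-edges of $\mathcal{G}$ between different $G_j$-orbits. Since the $G_{0,j}$-orbits refine the $G_j$-orbits, those two edges also lie between distinct $G_{0,j}$-orbits, and hence $\mathcal{G}_0$ admits a 2-fracture graph. Dually, if $\mathcal{G}$ has no 0-split then $\mathcal{G}_{r-1}$ admits a 2-fracture graph.

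The remaining case is when $\mathcal{G}$ has both a 0-split $\{a,b\}$ and an $(r-1)$-split $\{c,d\}$, with $a$ the vertex fixed by $G_0$ and $c$ the vertex fixed by $G_{r-1}$. Since $r\ge 4$, $\rho_0$ and $\rho_{r-1}$ commute, and a short computation gives that $\rho_0$ fixes both $c$ and $d=c\rho_{r-1}$, that $\rho_{r-1}$ fixes both $a$ and $b=a\rho_0$, and that each $\rho_j$ with $1\le j\le r-2$ fixes both $a$ and $c$ (belonging to both $G_0$ and $G_{r-1}$). By the preceding proposition, both $G_0$ and $G_{r-1}$ act primitively on their respective 10-point orbits. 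Assuming for contradiction that neither $\mathcal{G}_0$ nor $\mathcal{G}_{r-1}$ admits a 2-fracture graph, then every 2-cycle of $\rho_{r-1}$ other than $(c,d)$ must lie inside a single $G_{0,r-1}$-orbit, and symmetrically for $\rho_0$; in particular the 9-point set $\{1,\ldots,11\}\setminus\{a,c\}$ is forced to sit inside a single $G_{0,r-1}$-orbit.

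The main obstacle is to close this final both-splits case. Using that $G_{0,r-1}=\langle\rho_1,\ldots,\rho_{r-2}\rangle$ is a dihedral group when $r=4$ (respectively a rank-$3$ sggi when $r=5$), transitivity on $9$ points is extremely restrictive: for $r=4$ it forces $G_{0,r-1}\cong D_{18}$, with $\rho_1\rho_2$ acting as a $9$-cycle, and with the reflections $\rho_1,\rho_2$ each fixing a prescribed point on the $9$-cycle (namely $d$ and $b$ respectively). Combining these constraints with the primitive action of $G_0$ on the 10-point orbit (and the commutation of $\rho_3$ with $\rho_1$) yields the contradiction. The $r=5$ subcase is analogous, using the rank-3 sggi structure of $G_{0,r-1}$. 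I expect this combinatorial/structural elimination to be the bulk of the work.
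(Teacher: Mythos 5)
Your reduction to the case where $\mathcal{G}$ has both a $0$-split and an $(r-1)$-split is correct and matches the paper's: splits can only carry the labels $0$ and $r-1$ (each with a trivial orbit), every middle label automatically contributes two edges between distinct $G_{0,j}$-orbits, and if only one of the two extreme labels is a split then the corresponding $\mathcal{G}_0$ or $\mathcal{G}_{r-1}$ has a $2$-fracture graph. Your observation that, in the both-splits case, the failure of a $2$-fracture graph for $\mathcal{G}_0$ forces $G_{0,r-1}$ to be transitive on the nine points outside $\{a,c\}$ is also correct, and for $r=4$ it is essentially how the paper arrives at $G_{0,3}\cong D_{18}$.

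The problem is that you never actually close either subcase, and the closing is the substance of the proof. For $r=4$ you list the right constraints ($D_{18}$ with $\rho_1\rho_2$ a $9$-cycle, $\rho_1$ fixing $d$, $\rho_2$ fixing $b$) but then assert that combining them with primitivity of $G_0$ and the commutation of $\rho_3$ with $\rho_1$ ``yields the contradiction'' without saying what the contradiction is; those facts alone do not produce one. The paper's argument is concrete and different in flavour: the $9$-cycle makes $G_0$ and $G_3$ two-transitive of degree $10$, the atlas then forces $G_0\cong G_3\cong A_{10}$, and hence $G_0\cap G_3\supseteq A_9\neq D_{18}=G_{0,3}$, violating the intersection property. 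For $r=5$ the claim that the argument is ``analogous'' is not tenable: $G_{0,4}=\langle\rho_1,\rho_2,\rho_3\rangle$ is only a rank-$3$ sggi transitive on $9$ points, which is far less rigid than a dihedral group, and the paper has to work considerably harder here. It first proves, one by one, that all five generators are $2$-transpositions (fixed-point counts combined with Lemma~\ref{Olivia1} and the fact that a split cannot lie in an alternating square), then shows the $0$-edges and $4$-edges share no vertices, deduces the existence of a $\{0,2\}$- or $\{2,4\}$-double edge, and finally counts the six components of $\mathcal{G}_{1,3}$ against the four available $1$- and $3$-edges to contradict connectedness. None of this is foreshadowed by your sketch, so the $r=5$ case is a genuine missing piece rather than a routine adaptation of the $r=4$ one.
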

\begin{proof}
Suppose that neither $\mathcal{G}_0$  nor  $\mathcal{G}_{r-1}$ has a  $2$-fracture graph.
This is only possible if both  $i=0$ and $i=r-1$ are labels of splits. 
 Suppose that $r=4$ in this case $G_{0,3}\cong D_{18}$. Then $G_0$ and $G_3$ are even transitive groups of degree $10$ containing $D_{18}$. 
 Hence, both $G_0$ and $G_3$ contain a $9$-cycle, therefore they are primitive. This is only possible if  $G_0\cong A_{10}$ and $G_3\cong A_{10}$ \cite{ATLAS}. But then  $G_{0,3}\cong A_9$ not $D_{18}$, contradicting the intersection property.
Thus $r=5$.  Let $\{c,d\}$ be the $4$-split.

\underline{$\rho_0$ and $\rho_4$ are $2$-transpositions:}
Suppose that $\rho_0$ is a $4$-transposition. As $\rho_0$ and $\rho_4$ commute, $\{c,d\}\subseteq\mathrm{Fix}(\rho_0)$. 
 By Lemma~\ref{Olivia1} an edge connecting a vertex of $\overline{\mathrm{Fix}(\rho_0)}$ with  a vertex of $\mathrm{Fix}(\rho_0)$ must have label  $1$. 
 As $\mathcal{G}_4$ has a pendant $3$-edge, the vertices of this edge must belong to $\mathrm{Fix}(\rho_0)$. Hence, we have identified the three fixed points of $\rho_0$.
But then as $\mathcal{G}$ is connnected there is a $1$-edge from a vertex of $\mathrm{Fix}(\rho_0)$  and a vertex of $\overline{\mathrm{Fix}(\rho_0)}$. But then by the commuting property the $4$-split belongs to a $\{1,4\}$-square, a contradiction. Therefore $\rho_0$ is a $2$-transposition and, by duality, $\rho_4$ is also $2$-transposition. 

\underline{$\rho_2$ is a $2$-transposition:} If $\rho_2$ is a $4$-transposition then, as the three permutations $\rho_0$, $\rho_2$  and $\rho_4$ commute pairwisely, the vertices of the $0$-split and the vertices of the $4$-split must belong to $\mathrm{Fix}(\rho_2)$. Thus $\{a,b,c,d\}\subseteq \mathrm{Fix}(\rho_2)$, a contradiction.

\underline{$\rho_1$ and $\rho_3$ are $2$-transpositions:} Suppose that $\rho_1$ is a $4$-transposition. By Lemma~\ref{Olivia1} an edge connecting a vertex of $\overline{\mathrm{Fix}(\rho_1)}$ with  a vertex of $\mathrm{Fix}(\rho_1)$ must have either label  $0$ or $2$. Then, as the $4$-split  $\{c,d\}$  does not belong to a square, $\{c,d\}\subseteq \mathrm{Fix}(\rho_1)$. As $\mathcal{G}_4$ has a pendant 3-edge, the vertices of this edge must also belong to $\mathrm{Fix}(\rho_1)$. However with this, there are no possibilities for a 0-split with a trivial orbit, a contradiction. Hence $\rho_1$ is a $2$-transposition and, by duality, $\rho_3$ is also a $2$-transposition.

As $0$ and $4$ are labels of splits, $\mathcal{G}$ has neither $\{0,k\}$-squares ($k\neq 0$) nor $\{4,k\}$-squares  ($k\neq 4$). Consequently $\mathcal{G}$  also does not have  $\{0,4\}$-edges. Thus the $0$-edges and the $4$-edges have no vertices in common. But then there exists a $2$-edge meeting either a $0$-edge or a $4$-edge. This implies that $\mathcal{G}$ either has a $\{0,2\}$-edge or a $\{2,4\}$-edge. Up to duality we may assume that $\mathcal{G}$ has a $\{0,2\}$-edge.
Then $\mathcal{G}_{1,3}$ is the following graph.
\begin{small}
$$\xymatrix@-1.8pc{ & *{\bullet}\ar@{-}[rr]^0 && *{\bullet}&& *{\bullet}\ar@{=}[rr]^{\{0,2\}} && *{\bullet}&&  *{\bullet}\ar@{-}[rr]^4 && *{\bullet}\\
 & && *{\bullet}&& *{\bullet}\ar@{-}[rr]_2 && *{\bullet}&& *{\bullet}\ar@{-}[rr]_4 && *{\bullet}}$$
 \end{small}
Now as $\mathcal{G}_{1,3}$ has six connected components  and since $\mathcal{G}$ has exactly two $1$-edges and two $3$-edges, then $\mathcal{G}$ is disconnected, a contradiction.

This proves that $\mathcal{G}$ cannot have both a $0$-split and an $(r-1)$-split.
Consequently, by the Propositions~\ref{O2} to \ref{O5},  it may be assumed up to duality that $\mathcal{G}_0$ has a $2$-fracture graph.

 \end{proof}


Let us consider separately the cases $r=4$ and $r=5$. Assume, up to duality, the $\mathcal{G}_0$ has a $2$-fracture graph.\\

\begin{lemma}\label{4G}
 If  $r=4$ then $\mathcal{G}$  has exactly
\begin{enumerate}
 \item one  $\{1,3\}$-square;
 \item four $1$-edges;
 \item one double $\{1,3\}$-edge, if $\rho_3$ is a $4$-transposition;
 \item four $2$-edges;
\end{enumerate}
\end{lemma}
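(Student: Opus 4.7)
The plan is to exploit the local structure around the $0$-split with trivial component, the $2$-fracture hypothesis on $\mathcal{G}_0$, and the even parity of the generators, in order to force the exact edge counts. First I would pin down the neighbourhood of the split. Because $|O_1|=1$, the vertex $a$ has degree one in $\mathcal{G}$, with only the $0$-edge to $b$, and $\rho_1,\rho_2,\rho_3$ all fix $a$. The commuting relations $[\rho_0,\rho_2]=[\rho_0,\rho_3]=1$, together with $a\rho_0=b$ and $a\rho_2=a\rho_3=a$, give $b\rho_2=b\rho_3=b$, so the unique neighbour of $b$ in $\mathcal{G}_0$ is $c:=b\rho_1$, joined by a $1$-edge. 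Since $[\rho_1,\rho_3]=1$, I also obtain $c\rho_3=c$; thus the $\{1,3\}$-component of $b$ is just the pendant $1$-edge $\{b,c\}$. As $G\leq A_{11}$, every $\rho_i$ is even, and each of $\rho_1,\rho_2,\rho_3$ is a $2$- or $4$-transposition on the ten points of $O_2$.

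For items (b) and (d) I would analyse the $G_{0,1}=\langle\rho_2,\rho_3\rangle$-orbits on $O_2$. The orbit $\{b\}$ is singleton, and the remaining nine points form dihedral orbits; applying Lemma~\ref{l1} to the transitive action of $G_0$ on $O_2$ (on which $\rho_1$ acts as an even permutation) gives either an even-length $G_{0,1}$-orbit or at least four odd-length orbits. The $2$-fracture hypothesis forces at least two $1$-edges bridging distinct $G_{0,1}$-orbits on $O_2$, and $\{b,c\}$ is one of them. Assuming, for contradiction, that $\rho_1=(b,c)(u,v)$ is a $2$-transposition, the cycle $(u,v)$ would also have to be such a bridging $1$-edge; then a case analysis over the admissible orbit patterns, using connectedness of $\mathcal{G}_0$ on $O_2$, the even parity of $\rho_0$ and its commutation with $\rho_2,\rho_3$, and when necessary Lemma~\ref{IPF4}, forces a contradiction in every case. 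Hence $\rho_1$ is a $4$-transposition, proving (b). A parallel argument applied to the label-$2$ fracture requirement, aided by Lemmas~\ref{l0} and~\ref{l2}, gives (d).

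Finally, for (a) and (c), the commuting relation $[\rho_1,\rho_3]=1$ implies that components of $\mathcal{G}_{\{1,3\}}$ with more than two vertices are $\{1,3\}$-squares, while the smaller components are isolated vertices, pendant $1$- or $3$-edges, or double $\{1,3\}$-edges. The pendant edge $\{b,c\}$ accounts for two of $\rho_1$'s eight moved points; by (b) the remaining six must distribute, together with $\rho_3$'s support, into such components. A careful count of $1$- versus $3$-labels, combined with Lemma~\ref{claudio} and the fact that $0$ is the only split by Proposition~\ref{not2split}, yields existence and uniqueness of a single $\{1,3\}$-square, establishing (a). When $\rho_3$ is a $4$-transposition, its eight moved points must fit with that square, the pendant $1$-edge, and (by balancing $\rho_1$-cycles against $\rho_3$-cycles) exactly one further component, which parity compels to be a double $\{1,3\}$-edge, proving (c). The main obstacle is the case-by-case bookkeeping in the previous paragraph, where each admissible orbit pattern for $G_{0,1}$ on $O_2$ must be ruled out individually.
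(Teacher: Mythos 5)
Your setup is sound: the local analysis around the $0$-split (that $b\rho_2=b\rho_3=b$, that $\mathcal{G}_0$ has a pendant $1$-edge $\{b,c\}$ with $c\rho_3=c$, and that each generator is a $2$- or $4$-transposition) matches what the paper uses. But the proposal has a genuine gap: every decisive step is announced rather than carried out. For (b) you assume $\rho_1$ is a $2$-transposition and assert that ``a case analysis over the admissible orbit patterns \ldots forces a contradiction in every case''; for (d) you invoke ``a parallel argument''; for (a) you appeal to ``a careful count of $1$- versus $3$-labels''. These are precisely the parts that constitute the proof, and the existence half of (a) is the substantive claim of the whole lemma --- uniqueness is the easy direction (two $\{1,3\}$-squares plus the pendant $1$-edge would require five $1$-edges, impossible for an even involution on $11$ points). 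Nothing in your text actually produces the $\{1,3\}$-square.

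Your ordering also makes the unperformed work harder than it needs to be. The paper proves (a) first, by a short global contradiction: if $\mathcal{G}$ has no $\{1,3\}$-square, then (using the commuting property) it can have no alternating squares at all and no double edges other than double $\{1,3\}$-edges; connectedness then forces $\rho_1,\rho_2,\rho_3$ all to be $4$-transpositions, which yields three double $\{1,3\}$-edges and hence, by Lemma~\ref{claudio}, splits with labels $1$ and $3$ --- contradicting that $0$ is the only split. Once (a) is in hand, (b) is immediate ($\{1,3\}$-square plus pendant $1$-edge gives at least three $1$-edges, so $\rho_1$ is a $4$-transposition), (c) follows from the inclusion--exclusion count $8+8-10=6$ together with uniqueness of the square, and (d) follows by listing the $\mathcal{G}_{0,2}$-components on $O_2$ and observing that at least three $2$-edges are needed to connect them. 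By contrast, your plan front-loads an orbit-by-orbit elimination for (b) before (a) is available, which is exactly the bookkeeping you concede is ``the main obstacle'' and never resolve. A smaller point: your appeal to Lemma~\ref{l1} for the action on $O_2$ is not a literal application of that lemma (which concerns $\rho_0$ and the $G_{0,1}$-orbits, with $\rho_0$ centralizing $G_{0,1}$); the paper only deploys that machinery, via the auxiliary group $\tilde{G}=\langle\beta_0,\rho_1,\rho_2,\rho_3\rangle$, in the subsequent proposition, not in this lemma.
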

\begin{proof}

 (a) Suppose that $\mathcal{G}_0$ does not have a $\{1,3\}$-square.
 Then, $\mathcal{G}_0$ cannot have $\{1,2\}$-squares either, for otherwise, any edge incident to one of the vertices of the $\{1,2\}$-square must belong to a $\{1,3\}$-square, a contradiction.  
 Similarly $\mathcal{G}_0$ cannot have other squares nor double $\{1,2\}$-edges nor double $\{2,3\}$-edges.
 Thus two incident edges of $\mathcal{G}_0$  must have consecutive labels and the only admissible double edges of $\mathcal{G}_0$  have label-set  $\{1,3\}$.
We have that $\rho_2$ is a $4$-transposition, otherwise we would have  $\{1,3\}$-squares. 
Let us prove that $\rho_1$ is also a $4$-transposition.
 Suppose that $\rho_1$ is a $2$-transposition. As  $\mathcal{G}$ is connected,  $\mathcal{G}_0$ has at least nine edges, hence $\rho_3$ is a $4$-transposition. Then there exists a $1$-edge meeting a $3$-edge, which is only possible if we have a double $\{1,3\}$-edge, contradicting Lemma~\ref{claudio}.
 Thus $\rho_1$ is a $4$-transposition and, by similar arguments,
 we may also conclude that  $\rho_3$ is a $4$-transposition.
 Since there are no $\{1,2\}$-double edges nor $\{2,3\}$-double edges,  there are three  double $\{1,3\}$-edges. Hence $1$ and $3$ are labels of splits, a contradiction.
 Hence, $\mathcal{G}$ contains a $\{1,3\}$-square. 
 
 Let us prove uniqueness. Suppose that there are two  $\{1,3\}$-squares. Recall that the  $0$-split of $\mathcal{G}$ is adjacent to a pendant $1$-edge of  $\mathcal{G}_0$. The existence of two  $\{1,3\}$-squares and a pendant $1$-edge, implies that $\mathcal{G}_0$ has at least five $1$-edges, which is clearly is not possible.

 (b) As $\mathcal{G}_0$ has  a pendant $1$-edge and  $\mathcal{G}$ has a $\{1,3\}$-square, we conclude that  $\mathcal{G}$ has exactly four $1$-edges.
  
 (c) Suppose that $\rho_3$ is a $4$-transposition. Then, as $\rho_1$ is also a $4$-transposition, $\mathcal{G}_0$ has at least $8+8-10=6$ vertices that belong to both a $3$-edge and a $1$-edge. 
 Since $\rho_1$ commutes with $\rho_3$, and that there can only be one  $\{1,3\}$-square, then there must exist exactly one $\{1,3\}$-double edge.
 
  (d) From (a)-(c) we may conclude that the orbits of $\mathcal{G}_{0,2}$ acting on $O_2$ are one of the following
  \begin{small}
 $$\xymatrix@-1.7pc{ & *+[o][F]{b} \ar@{-}[rr]^1 && *{\bullet}&& *{\bullet}\ar@{=}[rr]^{\{1,3\}} && *{\bullet}&&  *{\bullet}\ar@{-}[rr]^1 && *{\bullet}\\
 & &&  && *{\bullet}\ar@{-}[rr]_3 && *{\bullet}&& *{\bullet}\ar@{-}[rr]_1\ar@{-}[u]^3 && *{\bullet}\ar@{-}[u]_3
}
\text{ or } \xymatrix@-1.7pc{ & *+[o][F]{b} \ar@{-}[rr]^1 && *{\bullet}&& *{\bullet}\ar@{-}[rr]^1 && *{\bullet}&&  *{\bullet}\ar@{-}[rr]^1 && *{\bullet}\\
 & &&  && *{\bullet}&& *{\bullet}&& *{\bullet}\ar@{-}[rr]_1\ar@{-}[u]^3 && *{\bullet}\ar@{-}[u]_3
}$$
\end{small}
Then, there must be at least three 2-edges connecting the orbits. Hence, $\rho_2$ is a 4-transposition.




\end{proof}

\begin{prop}\label{Gamma01_rank4_orbits}
Let $r=4$. The non-trivial connected components of $\mathcal{G}_{0,1}$ are either as in (1) or as in (2).
 \begin{small}
 \[\begin{array}{cc}

 (1) &\xymatrix@-1.8pc{ & *{\bullet}\ar@{-}[rr]^2 && *{\bullet}\ar@{-}[dr]^3  \\
 *{\bullet}\ar@{-}[dr]_2\ar@{-}[ur]^3 & && & *{\bullet}&& *{\bullet}\ar@{-}[rr]^2 && *{\bullet}\ar@{-}[rr]^3 && *{\bullet}\\
 & *{\bullet}\ar@{-}[rr]_3 && *{\bullet}\ar@{-}[ur]_2
 }\\
 (2)& \xymatrix@-1.8pc{ *{\bullet}\ar@{-}[rr]^2 && *{\bullet}\ar@{-}[rr]^3 && *{\bullet}\ar@{-}[rr]^2 && *{\bullet}&& *{\bullet} \ar@{-}[rr]^2 && *{\bullet}\ar@{-}[rr]^3 && *{\bullet}&& *{\bullet}\ar@{-}[rr]^2 && *{\bullet}\\
}
\end{array}\]
\end{small}
\end{prop}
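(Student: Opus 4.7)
The plan is to determine the orbits of $\langle\rho_2,\rho_3\rangle$ on the $11$ vertices, since the non-trivial components of $\mathcal{G}_{0,1}$ are exactly these orbits of size at least~$2$. Three observations drastically constrain the configuration. First, both $\rho_2$ and $\rho_3$ commute with $\rho_0$, and $\rho_0$ swaps only $a$ and $b$; since $\{a\}$ is a singleton $G_{0,1}$-orbit, Lemma~\ref{l0}(b) forces $\{b\}$ to be a singleton $G_{0,1}$-orbit too, so $a$ and $b$ are isolated in $\mathcal{G}_{0,1}$. Second, since $\rho_1$ and $\rho_3$ commute and $\rho_3(b)=b$, the vertex $v:=b\rho_1$ is fixed by $\rho_3$; the required connectivity of $\mathcal{G}_0$ on $O_2$ then forces $\rho_2$ to move $v$, making $v$ a degree-$1$ vertex of $\mathcal{G}_{0,1}$ carrying only a label-$2$ edge. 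Third, by Lemma~\ref{4G}, $\rho_2$ is a $4$-transposition, and so fixes exactly one further vertex $w\in O_2\setminus\{b\}$.

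The argument then splits according to whether $\rho_3$ is a $4$- or a $2$-transposition (Lemma~\ref{4G}(c)); these correspond to configurations~(1) and~(2). In the first case, $\rho_3$ fixes exactly $\{a,b,v\}$, so $w$ is moved by $\rho_3$ and has degree~$1$ with only a label-$3$ edge, while the remaining $7$ vertices of $O_2$ are moved by both $\rho_2$ and $\rho_3$ and have degree~$2$. The non-trivial part of $\mathcal{G}_{0,1}$ is therefore one path from $v$ to $w$ of even length $2k$ with $k\in\{1,2,3,4\}$, together with alternating $\{2,3\}$-cycles filling the remaining $8-2k$ edges. In the second case, $\mathcal{G}_{0,2}|_{O_2}$ has five components: $A,B$, two $\mathcal{G}_{0,2}$-isolated vertices $w_1,w_2$, and the $\{1,3\}$-square $D$. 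Since $\rho_2$ has only four edges yet must connect these five components, every $\rho_2$-edge lies between distinct components. This immediately rules out $w\in\{w_1,w_2\}$ (otherwise $w$ would be isolated in $\mathcal{G}_0|_{O_2}$), and a parity-of-$\rho_0$ argument rules out $w\in B$, forcing $w\in D$. The resulting degree sequence (five degree-$1$ label-$2$-only vertices, one degree-$1$ label-$3$-only vertex, three degree-$2$ vertices) yields three path components with endpoint labels $(2,2),(2,3),(2,2)$, and between the two length-distributions $(3,2,1)$ and $(4,1,1)$, the latter is excluded because it requires a $\rho_2$-edge inside $D$, leaving insufficiently many $\rho_2$-edges to span the five components. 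This leaves $(3,2,1)$, namely~(2).

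To eliminate the alternative cycle decompositions in the first case (namely $k\geq 2$, or the $4{+}2$- and $2{+}2{+}2$-cycle variants for $k=1$), one combines the evenness of $\rho_0$ with the string C-group intersection property $G_0\cap G_3=G_{0,3}$. Each alternative forces a $\{2,3\}$-double edge, which must sit on one of the four $\rho_3$-pairs already present in $\mathcal{G}_{0,2}$ (the double $\{1,3\}$-edge of $B$, the $3$-edge of $C$, or one of the two $3$-edges of the $\{1,3\}$-square $D$). Enumerating these few placements together with the few involutions on each resulting $G_{0,1}$-orbit that commute with $\rho_2$ and $\rho_3$, one finds in every case either that all admissible $\rho_0$ are odd permutations, or that $G_0\cap G_3$ properly contains the dihedral group $G_{0,3}$; the excess element in the latter situation is a short product involving the would-be triple $\{1,2,3\}$-edge, in the spirit of the arguments used in Proposition~\ref{notC}. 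Only the $6$-cycle plus length-$2$ path survives, yielding~(1).

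The main obstacle is the last paragraph's case analysis. It is finite but delicate: for each of the handful of alternative path-and-cycle decompositions one must either exhibit an explicit element of $G_0\cap G_3\setminus G_{0,3}$ (breaking the intersection property) or verify that every admissible $\rho_0$ commuting with $\rho_2$ and $\rho_3$ has an odd number of transpositions. Together these two obstructions force exactly the two structures~(1) and~(2).
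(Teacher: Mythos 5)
Your strategy differs from the paper's: you split on whether $\rho_3$ is a $2$- or a $4$-transposition and analyse the resulting path-plus-cycles decomposition of $\mathcal{G}_{0,1}$, whereas the paper runs a case analysis on the size $s$ of the largest $\mathcal{G}_{0,1}$-component. Your Case 2 (the $2$-transposition case, with the spanning-tree argument on the five components of $\mathcal{G}_{0,2}$ and the exclusion of the $(4,1,1)$ length distribution) is correct and complete. Case 1, however, contains a genuine gap. The assertion that \emph{each} alternative decomposition forces a $\{2,3\}$-double edge is false: for $k=4$ the nine moved vertices form a single alternating path and there is no cycle at all, and for $k=2$ the four leftover vertices may form a single $\{2,3\}$-square, which is not a double edge. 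Neither configuration is touched by your enumeration of double-edge placements. Both are in fact eliminated by parity via Lemma~\ref{l0}: the $G_{0,1}$-orbits have sizes $1,1,9$, resp.\ $1,1,5,4$; the odd orbits are pairwise non-isomorphic, so $\rho_0$ fixes them pointwise, and on a $\{2,3\}$-square the centraliser of $\langle\rho_2,\rho_3\rangle$ consists of products of two transpositions, so every admissible $\rho_0$ is $(a,b)$ times an even number of transpositions, hence odd. This is exactly the kind of argument the paper deploys for $s\in\{8,9\}$ and for its $s=4$ square sub-cases, so the gap is repairable, but as written your procedure simply does not reach these two configurations.

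Beyond that, the remaining eliminations in Case 1 --- placing the double $\{2,3\}$-edge on one of the four known $3$-edges of $\mathcal{G}_{0,2}$ and checking that each placement yields either an odd $\rho_0$ or an element of $G_0\cap G_3\setminus G_{0,3}$ --- are asserted rather than carried out, and you acknowledge this yourself; since that is where the content of the proposition lies, the proof is incomplete. Two smaller points. First, your justification that $a$ and $b$ are isolated in $\mathcal{G}_{0,1}$ rests on the claim that $\rho_0$ swaps only $a$ and $b$, which is false ($\rho_0$ is even, so it cannot equal $(a,b)$); the correct reason is Proposition~\ref{path}, which forces every non-split edge at $a$ or $b$ to carry label $1$. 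Second, your appeal to the intersection property $G_0\cap G_3=G_{0,3}$ is legitimate under the standing hypotheses but is an extra dependency the paper deliberately avoids in this proposition, deferring all intersection-property failures to Proposition~\ref{notC}; the parity and counting arguments of Lemma~\ref{4G} suffice here.
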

\begin{proof}

 The group $\tilde{G}=\langle \beta_0,\rho_1,\rho_2,\rho_3\rangle$ is a transitive group on  $10$ points and $\tilde{G}_0=G_0$ is transitive. Hence Lemmas~\ref{l0} and \ref{l2} can be used to restrict the sizes of the orbits of $G_{0,1}$.

Let $s$ denote the size of the largest connected component of  $\mathcal{G}_{0,1}$.  Let us consider separately all the possibilities for $s$. Recall that the $0$-split $\{a,b\}$ of $\mathcal{G}$ must be adjacent to a $1$-edge, to be precise, $\mathcal{G}_0$ has a pendant $1$-edge. Furthermore $\mathcal{G}_{0,1}$ has a pendant $2$-edge and fixes the points $a$ and $b$.  Hence $s\notin\{10,11\}$. Moreover, as $\rho_0$ is an even permutation, $s\notin \{8,9\}$. In addition $s>2$ for otherwise $\rho_2$ and $\rho_3$ would commute, a contradiction.
  
 \underline{$s=7$}: In this case the non-trivial components of $\mathcal{G}_{0,1}$ are an alternating path with the sequence of labels $(2,3,2,3,2,3)$ and a double $\{2,3\}$-edge. 
 Since $\rho_3$ is a $4$-transposition, by Proposition~\ref{4G} (a) and (c),  $\mathcal{G}$ has a $\{1,3\}$-square and a double $\{1,3\}$-edge. 
 Moreover the $\{1,3\}$-square must have the double $\{2,3\}$-edge, which implies that $\mathcal{G}$ either has a split with label $3$ or $G_3$ is transitive, a contradiction.
 
 \underline{$s=6$}: The largest orbit is either a path or a hexagon. Suppose first it is a path. Then  $\mathcal{G}_{0,1}$ has three isolated vertices and the following non-trivial components.
 \begin{small}
$$\xymatrix@-1.8pc{ *{\bullet}\ar@{-}[rr]^2 && *{\bullet}\ar@{-}[rr]^3 && *{\bullet}\ar@{-}[rr]^2 && *{\bullet}\ar@{-}[rr]^3 && *{\bullet}\ar@{-}[rr]^2  && *{\bullet} && *{\bullet}\ar@{-}[rr]^2 && *{\bullet}
}$$
\end{small}
Then the $1$-edges of the unique $\{1,3\}$-square of $\mathcal{G}$ are between vertices of the path. But then $\mathcal{G}$ must have at least another three $1$-edges to connect the remaining components (besides the $0$-split), a contradiction.

Now suppose that the largest component is a hexagon. Then, as we need a pendant edge labelled 2, the non-trivial components of $\mathcal{G}_{0,1}$ are as in (1).

\underline{$s=5$}:  In this case  we have the following possibilities for the non-trivial components of $\mathcal{G}_{0,1}$.
\begin{small}
\[\begin{array}{cc}
\mbox{(a)} \quad \xymatrix@-1.8pc{ *{\bullet}\ar@{-}[rr]^2 && *{\bullet}\ar@{-}[rr]^3 && *{\bullet}\ar@{-}[rr]^2 && *{\bullet}\ar@{-}[rr]^3 && *{\bullet} && *{\bullet}\ar@{-}[rr]^2 && *{\bullet}\\
&& && && && && *{\bullet}\ar@{-}[u]^3 \ar@{-}[rr]_2 && *{\bullet}\ar@{-}[u]_3}
&
\mbox{(b)} \quad \xymatrix@-1.8pc{ *{\bullet}\ar@{-}[rr]^2 && *{\bullet}\ar@{-}[rr]^3 && *{\bullet}\ar@{-}[rr]^2 && *{\bullet}\ar@{-}[rr]^3 && *{\bullet}&&  *{\bullet}\ar@{-}[rr]^2 && *{\bullet}&&  *{\bullet}\ar@{-}[rr]^2 && *{\bullet}}\\
\\[-5pt]
&\mbox{(c)} \quad \xymatrix@-1.8pc{ *{\bullet}\ar@{-}[rr]^2 && *{\bullet}\ar@{-}[rr]^3 && *{\bullet}\ar@{-}[rr]^2 && *{\bullet}\ar@{-}[rr]^3 && *{\bullet}&&  *{\bullet}\ar@{=}[rr]^{\{2,3\}} && *{\bullet}&&  *{\bullet}\ar@{=}[rr]^{\{2,3\}} && *{\bullet}\\
}
\end{array}\]
\end{small}
In (a) there is only one even component of size $4\not\equiv 2 \mod 4$,  a contradiction. In (b), similar to the case when $s=6$, the existence of a $\{1,3\}$-square and the connectness of $\mathcal{G}$ forces the existence of at least five $1$-edges, a contradiction. In (c) the $1$-edges connecting these components must belong to at least two   $\{1,3\}$-squares, a contradiction.

\underline{$s=4$}: The largest component of $\mathcal{G}_{0,1}$ must be either a square or  a path.
Assume first it is a square. If $\mathcal{G}_{0,1}$ has another component of size $4$, then the action $\rho_0$ is odd, a contradiction. Now, as $\mathcal{G}$ has four $2$-edges, $\mathcal{G}_{0,1}$ has exactly three non-trivial components. Hence the possibilities are as follows.
\begin{small}
\[\begin{array}{ccc}
\mbox{(a)} \; \xymatrix@-1.8pc{ *{\bullet}\ar@{-}[rr]^2 && *{\bullet}&& *{\bullet}\ar@{=}[rr]^{\{2,3\}} && *{\bullet}  \\
 *{\bullet}\ar@{-}[u]^3 \ar@{-}[rr]_2 && *{\bullet}\ar@{-}[u]_3 && *{\bullet}\ar@{=}[rr]_{\{2,3\}}&&*{\bullet} 
} 
&
\mbox{(b)}\; \xymatrix@-1.8pc{ *{\bullet}\ar@{-}[rr]^2 && *{\bullet}&& *{\bullet}\ar@{-}[rr]^2 && *{\bullet}\ar@{-}[rr]^3  && *{\bullet}  \\
 *{\bullet}\ar@{-}[u]^3 \ar@{-}[rr]_2 && *{\bullet}\ar@{-}[u]_3 && *{\bullet}\ar@{=}[rr]_{\{2,3\}} &&*{\bullet} 
}
&
\mbox{(c)} \; \xymatrix@-1.8pc{ *{\bullet}\ar@{-}[rr]^2 && *{\bullet}&& *{\bullet}\ar@{-}[rr]^2 && *{\bullet}   \\
 *{\bullet}\ar@{-}[u]^3 \ar@{-}[rr]_2 && *{\bullet}\ar@{-}[u]_3 && *{\bullet}\ar@{-}[rr]^2 &&*{\bullet} 
}
\end{array}\]
\end{small}
In (a) and (b)  the $1$-edges between these components, will induce more than one  $\{1,3\}$-square, a contradiction. In (c) the existence of a  $\{1,3\}$-square implies that $\mathcal{G}$ is disconnected, a contradiction.

Now suppose that the largest component is a path. Since, by Lemma~\ref{4G} (d),  $\mathcal{G}$ has four $2$-edges, the path must have the sequence of labels  $(2,\,3,\,2)$. 
Thus $\mathcal{G}$ has only two $3$-edges, the ones belonging to the $\{1,3\}$-square (that must exist by Lemma~\ref{4G} (a)). 
If there is another path with four vertices, then the action of $\rho_0$ is odd, a contradiction.
Moreover, if there is a $\{2,3\}$-edge, then by Lemma~\ref{claudio} either $\mathcal{G}$ has a $3$-split or $G_3$ is transitive, a contradiction.
Then there is only one possibility corresponding to the graph (2) of this proposition.

\underline{$s=3$}: Lastly, if the largest orbit has three points, and since $\rho_2$ is a 4-transposition, we have one of the following possibilities for the non-trivial components of  $\mathcal{G}_{0,1}$.
\begin{small}
\[\begin{array}{ccccc}
\mbox{(a)}& \xymatrix@-1.7pc{ *{\bullet}\ar@{-}[rr]^2 && *{\bullet}\ar@{-}[rr]^3 && *{\bullet}&*{\bullet}\ar@{=}[rr]^{\{2,3\}} && *{\bullet}& *{\bullet}\ar@{-}[rr]^2 && *{\bullet}& *{\bullet}\ar@{-}[rr]^2 && *{\bullet}}&&
\mbox{(b)} &\xymatrix@-1.7pc{*{\bullet}\ar@{-}[rr]^2 && *{\bullet}\ar@{-}[rr]^3 && *{\bullet}& *{\bullet}\ar@{=}[rr]^{\{2,3\}}  && *{\bullet}& *{\bullet}\ar@{=}[rr]^{\{2,3\}} && *{\bullet}& *{\bullet}\ar@{=}[rr]^{\{2,3\}}  && *{\bullet}}
\end{array}\]
\end{small}
In both cases, by Lemma~\ref{claudio} either there is a $3$-split or $G_3$ is transitive, a contradiction.

Hence, the only possibilities are  the ones stated in this proposition.
\end{proof}

In what follows we analyse the situation when $\Gamma$ has rank five.\\

\begin{lemma}\label{5G}
Let $r=5$. The permutation representation graph  $\mathcal{G}$  has
\begin{enumerate}
 \item exactly two $4$-edges, no double edges with  label $4$ neither  $\{3,4\}$-squares;
 \item exactly two $3$-edges and no double edges having with label $3$;
 \item a $\{2,4\}$-square and  four $2$-edges.
\end{enumerate}
\end{lemma}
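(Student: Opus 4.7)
The plan is to exploit the pendant structure at $b$ in $\mathcal{G}_0$, together with Lemma~\ref{Olivia1} and the absence of splits in $\mathcal{G}_0$ guaranteed by its 2-fracture graph. I first observe that $\rho_0(a)=b$ and each of $\rho_2,\rho_3,\rho_4$ commutes with $\rho_0$, forcing each to fix $b$; hence in $\mathcal{G}_0$ the vertex $b$ carries only the pendant 1-edge $\{b,c\}$ with $c:=\rho_1(b)$. Because $\rho_1$ also commutes with $\rho_3$ and $\rho_4$, the containments $b\in\mathrm{Fix}(\rho_3)\cap\mathrm{Fix}(\rho_4)$ propagate to $c$; in particular $\rho_3(c)=c$.

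For (a), I would argue by contradiction that $\rho_4$ is a 2-transposition. If $\rho_4$ were a 4-transposition, $|\mathrm{Fix}(\rho_4)|=3$ and the above forces $\mathrm{Fix}(\rho_4)=\{a,b,c\}$; Lemma~\ref{Olivia1} with $i=4$ restricts $\rho_2$ to preserve $\{a,b,c\}$, and since $\rho_2$ fixes $a,b$ we get $\rho_2(c)=c$. Combined with $\rho_3(c)=c$ and $\rho_4(c)=c$, the vertex $c$ has in $\mathcal{G}_0$ only the pendant 1-edge to $b$; so $\{b,c\}$ forms an isolated component of $\mathcal{G}_0$ within the 10-vertex $G_0$-orbit, contradicting the transitivity of $G_0$. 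Hence $\rho_4$ is a 2-transposition, giving exactly two 4-edges. No double 4-edge exists, by Lemma~\ref{claudio} combined with Proposition~\ref{not2split} (a double $\{4,j\}$-edge would yield a 4-split, incompatible with the 0-split). A $\{3,4\}$-square is excluded: it would use both 4-edges, placing $\mathrm{supp}(\rho_4)$ entirely on the square, and then whether $\rho_3$ is a 2- or 4-transposition no 3-edge would cross between $\mathrm{Fix}(\rho_4)$ and $\overline{\mathrm{Fix}(\rho_4)}$, disconnecting the 10-vertex orbit in $\mathcal{G}_0$.

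For (b), a parallel analysis for $\rho_3$: if $\rho_3$ were a 4-transposition, $\mathrm{Fix}(\rho_3)=\{a,b,c\}$, and Lemma~\ref{Olivia1} with $i=3$ (admitting crossings only in $\{2,4\}$) together with $\rho_4(c)=c$ (from (a), as $\rho_1,\rho_4$ commute and $b\in\mathrm{Fix}(\rho_4)$) leaves only a possible 2-edge $\{c,\rho_2(c)\}$ emanating from $\{b,c\}$. If $\rho_2(c)=c$ then $c$ is isolated beyond the pendant 1-edge, disconnecting the orbit; otherwise $\rho_2(c)\in\mathrm{Fix}(\rho_4)\setminus\{a,b,c\}$ and $\{c,\rho_2(c)\}$ is a bridge of $\mathcal{G}_0$ which, via a finer $G_{0,2}$-orbit analysis, is shown to be the unique 2-crossing --- hence a 2-split of $\mathcal{G}_0$, forbidden by 2-fracture. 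For no double 3-edges: when $j\in\{1,2,4\}$ the double $\{3,j\}$-edge persists in $\mathcal{G}_0$ and Lemma~\ref{claudio} yields a 3-split of $\mathcal{G}_0$, contradicting 2-fracture; when $j=0$, Lemma~\ref{claudio} yields a 3-split of $\mathcal{G}$, and one verifies it descends to a 3-split of $\mathcal{G}_0$ by showing the doubled pair lies in a single $G_{0,3}$-orbit, again contradicting 2-fracture.

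For (c), with $\rho_3,\rho_4$ both 2-transpositions and their supports disjoint from $\{a,b,c\}$, Lemma~\ref{Olivia1} forces every 2-edge to lie entirely inside $\mathrm{Fix}(\rho_4)$ or entirely inside $\overline{\mathrm{Fix}(\rho_4)}$. Connectivity of the 10-vertex $G_0$-orbit together with the exclusions of (a) forces $\rho_3$'s two 3-edges to cross the boundary of $\mathrm{supp}(\rho_4)$ (meeting each 4-pair in exactly one vertex) and $\rho_2(c)\neq c$. The commutation of $\rho_2$ with $\rho_4$ and the absence of double $\{2,4\}$-edges then restrict $\rho_2$'s action on $\mathrm{supp}(\rho_4)$ to the identity or a diagonal double transposition; a careful argument using the 2-fracture requirement eliminates the identity, producing a $\{2,4\}$-square and forcing $\rho_2$ to be a 4-transposition with four 2-edges in total. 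The main technical obstacles are the finer analyses needed in (b) --- the 2-split upgrade of the bridge $\{c,\rho_2(c)\}$ and the $j=0$ sub-case --- and the identity-exclusion step in (c).
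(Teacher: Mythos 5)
Your setup and part (a) are sound: the observation that $\rho_2,\rho_3,\rho_4$ all fix $b$ (hence $b$ carries only the pendant $1$-edge $\{b,c\}$ in $\mathcal{G}_0$) and that the fixed points propagate to $c$ is correct, and your disconnection argument for ``$\rho_4$ is a $2$-transposition'' and for excluding $\{3,4\}$-squares is valid. It is in fact a different route from the paper, which bounds a path through four $4$-edges via Proposition~\ref{path} and kills the $\{3,4\}$-square by showing $\rho_3$ and $\rho_4$ would commute; your version is arguably more uniform. The trouble is that in (b) and (c) you defer exactly the steps that carry the weight of the lemma.

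In (b), your contradiction rests on $\{c,\rho_2(c)\}$ being a $2$-split of $\mathcal{G}_0$, i.e.\ the \emph{unique} $2$-edge between distinct $G_{0,2}$-orbits. Being a bridge separating $\{b,c\}$ from the remaining eight points $V$ does not give this: if $V$ breaks into two or more $G_{0,2}=\langle\rho_1,\rho_3,\rho_4\rangle$-orbits (which is consistent with $\rho_3$ being fixed-point-free on $V$), then further $2$-edges of $\rho_2$ inside $V$ can also cross between orbits and the $2$-fracture condition is not violated. Ruling this out is precisely the content of the paper's argument, which splits on $\rho_1$ being a $2$- or $4$-transposition, derives the existence of exactly one $\{1,3\}$-square and one double $\{1,3\}$-edge, pins down all five components of $\mathcal{G}_{0,2,4}$, and then eliminates the two possible placements of the $4$-edges; your ``finer $G_{0,2}$-orbit analysis'' is that entire computation, not a routine check. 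The $j=0$ subcase of ``no double $3$-edges'' is likewise asserted (``one verifies\dots'') without an argument for why the doubled pair lies in one $G_{0,3}$-orbit. In (c), the claims that each $4$-edge meets exactly one $3$-edge and, above all, that ``a careful argument using the 2-fracture requirement eliminates the identity'' are again the whole point: the paper handles this by an explicit enumeration of $\mathcal{G}_0$ restricted to $O_2$ (the $\{1,4\}$-square configurations and the two path configurations), each dying on a parity obstruction ($\rho_1$ or $\rho_2$ odd) or on the absence of a pendant $1$-edge. None of these closing arguments appears in your proposal, so as written the proof of (b) and (c) is incomplete at its core, even though the framework you set up could plausibly support it if those analyses were carried out.
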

\begin{proof}

(a) If there are four $4$-edges a minimal path in $\mathcal{G}$ starting in the vertex $a$ and containing the four $4$-edges must have size at least $4+8=12$ by Lemma~\ref{path}. This gives a contradiction.
By Lemma~\ref{claudio}, $\mathcal{G}$ does not have double $\{i,4\}$-edges ($i\neq 4$). If there is a $\{3,4\}$-square, then $\rho_3$ and $\rho_4$ would commute, a contradiction.
  
(b) Suppose that $\rho_3$ is a $4$-transposition. Consider  first that $\rho_1$ is a $2$-transposition. As $\rho_1$ commutes with $\rho_3$ and $\mathcal{G}_0$  has a pendant $1$-edge, then $\mathcal{G}$ has  a $\{1,3\}$-double edge, but then by Lemma~\ref{claudio} $\mathcal{G}$ has a $1$-split, a contradiction.
Hence, $\rho_1$ is a $4$-transposition. 
  As $\rho_1$ commutes with $\rho_3$ and they move at most $10$ points, then there are at least $6=8+8-10$ vertices moved by both $\rho_1$ and $\rho_3$. If  there are no $\{1,3\}$-squares, there are three $\{1,3\}$-double edges, but then $\mathcal{G}$ has a $1$-split and a $3$-split, a contradiction. This shows that $\mathcal{G}$ has a $\{1,3\}$-square. As $\mathcal{G}_0$ has a pendant $1$-edge, there also exists exactly one $\{1,3\}$-double edge.  This determines the graph $\mathcal{G}_{0,2,4}$ which has exactly five components: a $\{1,3\}$-square, a $\{1,3\}$-double edge, a $1$-edge, a $3$-edge and a single vertex. Now by (a)  the two $4$-edges must connect vertices in different components of  $\mathcal{G}_{0,2,4}$.
  This gives the following  possibilities for the non-trivial components of $\mathcal{G}_{0,2}$.
  \begin{small}
\[\begin{array}{cccccc}
\mbox{(1)}&\xymatrix@-1.8pc{*{\bullet}\ar@{-}[rr]^1 && *{\bullet}&& *{\bullet}\ar@{=}[rr]^{\{1,3\}} && *{\bullet}\\
  *{\bullet}\ar@{-}[rr]^1\ar@{-}[u]^3 && *{\bullet}\ar@{-}[u]_3  && *{\bullet}\ar@{-}[rr]_3 && *{\bullet}\\
  *{\bullet}\ar@{-}[rr]_1\ar@{-}[u]^4 && *{\bullet}\ar@{-}[u]_4}&&
\mbox{(2)} & \xymatrix@-1.8pc{*{\bullet}\ar@{-}[rr]^1 && *{\bullet}&& *{\bullet}\ar@{-}[rr]^1 && *{\bullet}\\
  *{\bullet}\ar@{-}[rr]^1\ar@{-}[u]^3 && *{\bullet}\ar@{-}[u]_3  && *{\bullet}\ar@{-}[rr]_3 && *{\bullet}\\
  *{\bullet}\ar@{=}[rr]_{\{1,3\}} \ar@{-}[u]^4 && *{\bullet}\ar@{-}[u]_4}
\end{array}\]
\end{small}

In (1) $\mathcal{G}_0$ does not have a pendant $1$-edge, and in (2) there exists a $1$-split. In both cases we get a contradiction.
This shows that $\rho_3$ is a $2$-transposition. By Lemma~\ref{claudio}, the label $3$ cannot be one of the labels of a double edge.

(c) Suppose that there is no  $\{2,4\}$-square. Let us first deal with the case where $\mathcal{G}$ has a $\{1,4\}$-square.
Since the $\rho_3$ and $\rho_4$ are $2$-transpositions and cannot commute with each other, then the non-trivial component of $\mathcal{G}_0$ is one of the following two graphs.
\begin{small}
 \[\begin{array}{ccccc}
\mbox{(1)} &\xymatrix@-1.8pc{*{\bullet}\ar@{-}[rr]^1 && *{\bullet}\ar@{-}[rr]^2 && *{\bullet}\ar@{-}[rr]^1 && *{\bullet}\ar@{-}[rr]^2 && *{\bullet}\ar@{-}[rr]^1 && *{\bullet}\\
  && && && &&*{\bullet}\ar@{-}[rr]^1\ar@{-}[u]^3 && *{\bullet}\ar@{-}[u]_3\\
  && && && &&  *{\bullet}\ar@{-}[rr]_1\ar@{-}[u]^4 && *{\bullet}\ar@{-}[u]_4}&&
 \mbox{(2)} & \xymatrix@-1.8pc{*{\bullet}\ar@{-}[rr]^1 && *{\bullet}\ar@{-}[rr]^2 && *{\bullet}\ar@{-}[rr]^1 && *{\bullet}\ar@{-}[rr]^2 && *{\bullet}\ar@{-}[rr]^1 &&  *{\bullet}\\
  && && *{\bullet}\ar@{-}[rr]^1\ar@{-}[u]^3 && *{\bullet}\ar@{-}[u]_3\\
  && &&  *{\bullet}\ar@{-}[rr]_1\ar@{-}[u]^4 && *{\bullet}\ar@{-}[u]_4}\\
  \end{array}\]
  \end{small}
 In both cases $\rho_1$ is odd, a contradiction.
 
 Now consider that  $\mathcal{G}$ does not have $\{i,4\}$-squares for $i\in\{1,2,3\}$. 
 The   $\{1,3\}$-squares are also forbidden otherwise there must exist a $4$-edge incident to a vertex of this square (recall that $\rho_3$ is a $2$-transposition and $\rho_3$ cannot commute with $\rho_4$), but then $\mathcal{G}$ has an
   $\{1,4\}$-square, a contradiction. By a similar argument $\mathcal{G}$ does not have  $\{2,3\}$-squares. Keep in mind that by (a) and (b),  $\mathcal{G}$ does not have double edges containing the labels $3$ or $4$.
 Hence $\mathcal{G}_0$ restricted to $O_2$ must be one of the following graphs.

 \begin{small}
   $$\mbox{(1)}\; \xymatrix@-1.8pc{*{\bullet}\ar@{-}[rr]^2 && *{\bullet}\ar@{-}[rr]^1 && *{\bullet}\ar@{-}[rr]^2 && *{\bullet}\ar@{-}[rr]^1 && *{\bullet}\ar@{-}[rr]^2  && *{\bullet}\ar@{-}[rr]^3 &&  *{\bullet}\ar@{-}[rr]^4  && *{\bullet}\ar@{-}[rr]^3  && *{\bullet}\ar@{-}[rr]^4  && *{\bullet}}\quad \mbox{(2)}\;
  \xymatrix@-1.8pc{ *{\bullet}\ar@{-}[rr]^4 && *{\bullet}\ar@{-}[rr]^3 && *{\bullet}\ar@{-}[rr]^2 && *{\bullet}\ar@{-}[rr]^1 && *{\bullet}\ar@{-}[rr]^2  && *{\bullet}\ar@{-}[rr]^1 &&  *{\bullet}\ar@{-}[rr]^2   && *{\bullet}\ar@{-}[rr]^3  && *{\bullet}\ar@{-}[rr]^4  && *{\bullet} }$$
   \end{small}

   But in these cases neither $\mathcal{G}_0$ has a pendant $1$-edge, nor is $\rho_2$ an even permutation, a contradiction. This proves the existence of a $\{2,4\}$-square. 
   
 Now suppose  that $\rho_2$ is a $2$-transposition. Since $\rho_1$ cannot commute with $\rho_2$, there must exist a $1$-edge incident to a vertex of the  $\{2,4\}$-square but then there exists another  $\{1,4\}$-square, making $\rho_4$ is a $4$-transposition, contradicting (a).

Consequently $\rho_2$ is a $4$-transposition.

\end{proof}

\begin{prop}\label{Gamma01_rank5_orbits}
Let $r=5$. The non-trivial components of $\mathcal{G}_{0,1}$ are either as in (a), (b) or (c).
\begin{small}
 \[\begin{array}{cccccc}
(a) & \xymatrix@-1.8pc{ *{\bullet}\ar@{-}[rr]^3 && *{\bullet}\ar@{-}[rr]^2 && *{\bullet}\ar@{-}[rr]^3 && *{\bullet}\ar@{-}[rr]^2 && *{\bullet}\\
 && *{\bullet}\ar@{-}[rr]_2\ar@{-}[u]^4 && *{\bullet}\ar@{-}[u]_4 && *{\bullet}\ar@{-}[rr]^2 && *{\bullet}
 }
 &
(b)&\xymatrix@-1.8pc{ *{\bullet}\ar@{-}[rr]^3 && *{\bullet}\ar@{-}[rr]^4 && *{\bullet}\ar@{-}[rr]^3 && *{\bullet}\ar@{-}[rr]^2 && *{\bullet}\\
 && *{\bullet}\ar@{-}[rr]_4\ar@{-}[u]^2 && *{\bullet}\ar@{-}[u]_2 && *{\bullet}\ar@{-}[rr]^2 && *{\bullet}
 }
 &
 (c)&
 \xymatrix@-1.8pc{ *{\bullet}\ar@{-}[rr]^3 && *{\bullet}\ar@{-}[rr]^4 && *{\bullet}&& *{\bullet}\ar@{-}[rr]^2 && *{\bullet}\\
 && *{\bullet}\ar@{-}[rr]_4\ar@{-}[u]^2 && *{\bullet}\ar@{-}[u]_2 \ar@{-}[rr]_3 && *{\bullet}\ar@{-}[rr]_2 && *{\bullet}
 }                                                                                                                                                                                       
  \end{array}\]
 \end{small}
\end{prop}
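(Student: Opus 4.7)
The strategy mirrors the rank-$4$ analysis of Proposition~\ref{Gamma01_rank4_orbits}. I would start by collecting the combinatorial skeleton from Lemma~\ref{5G}: $\mathcal{G}$ has exactly four $2$-edges, two $3$-edges, two $4$-edges; a unique $\{2,4\}$-square; no $\{3,4\}$-squares and no double edges of label $3$ or $4$; $\rho_2$ is a $4$-transposition and $\rho_3,\rho_4$ are $2$-transpositions. I would then identify the forced isolated vertices of $\mathcal{G}_{0,1}$. The vertex $a$ of the singleton $G_0$-orbit $O_1$ is fixed by $G_{0,1}=\langle\rho_2,\rho_3,\rho_4\rangle$, and by Proposition~\ref{path} every edge of $\mathcal{G}$ incident to the other endpoint $b$ of the $0$-split (besides the split itself) must carry label $1$, so $b$ is likewise isolated. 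The eight edges of $\mathcal{G}_{0,1}$ therefore all live on the nine vertices of $O_2\setminus\{b\}$. Writing $\rho_0=(a,b)\beta_0$ with $\beta_0$ an involution of $O_2$ fixing $b$ (needed so that $\rho_0^2=1$), the evenness of $\rho_0$ forces $\beta_0$ to be an odd involution on $O_2\setminus\{b\}$, i.e.\ a product of one or three transpositions. Lemma~\ref{l0} then constrains the $G_{0,1}$-orbits on $O_2$: each transposition of $\beta_0$ either sits inside a single orbit of even size, or exchanges two distinct orbits of equal size carrying isomorphic induced subgraphs.

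The core of the proof is a case analysis on the size $s$ of the largest non-trivial component of $\mathcal{G}_{0,1}$, with $4\le s\le 9$ (the lower bound from the $\{2,4\}$-square, the upper from the isolation of $a$ and $b$). For each $s\ne 7$ I would derive a contradiction by combining (i) the fixed edge counts and forbidden substructures of Lemma~\ref{5G}, (ii) the connectivity of $\mathcal{G}$ once the two $1$-edges and the split are added, (iii) the pendant-$1$-edge requirement at $b$ in $\mathcal{G}_0$, and (iv) the parity and isomorphism constraints from Lemma~\ref{l0}. For example, $s=9$ would force all nine non-isolated vertices into one orbit of odd size, leaving no room for any transposition of $\beta_0$ (Lemma~\ref{l0}(a)) and making $\rho_0=(a,b)$ odd; analogous but more intricate arguments, exploiting the tension between the $4$-transposition $\rho_2$ (which needs eight non-fixed vertices) and the vertex-budget remaining after placing the $\{2,4\}$-square, rule out $s\in\{4,5,6,8\}$.

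When $s=7$, the remaining partition on $O_2\setminus\{b\}$ is either $\{7,2\}$ or $\{7,1,1\}$; the latter is excluded because $\rho_2$, being a $4$-transposition, would need eight non-fixed vertices but only the seven of the size-$7$ component are available. Hence the partition is $\{7,2\}$, and the small component must carry a single edge of label $2$ (not $3$ or $4$, since both $4$-edges are locked inside the $\{2,4\}$-square, and a $3$-edge there would again leave $\rho_2$ short of vertices for its two remaining transpositions). The size-$7$ component then houses the $\{2,4\}$-square, the fourth $2$-edge, and both $3$-edges; enumerating the placements of the two $3$-edges around the square while avoiding a $\{3,4\}$-square, a double edge of label $3$, and disconnection yields exactly the three configurations (a), (b), (c). The main obstacle is the exhaustive elimination for $s\ne 7$: each alternative orbit-partition admits several distinct edge placements, and each must be individually excluded by combining the above constraints, mirroring the careful bookkeeping carried out in the proof of Proposition~\ref{Gamma01_rank4_orbits}.
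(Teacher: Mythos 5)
Your outline follows the paper's own route almost exactly: isolate $a$ and $b$ in $\mathcal{G}_{0,1}$, fix the edge budget and forbidden substructures via Lemma~\ref{5G}, and run a case analysis on the size $s$ of the largest component, finishing $s=7$ by enumerating how the two $3$-edges attach to the $\{2,4\}$-square. The endgame for $s=7$ (ruling out the partition $\{7,1,1\}$ because four disjoint $2$-edges need eight vertices, then forcing the small component to be a single $2$-edge) is correct and matches the paper.

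There is, however, one concrete soft spot in the toolkit you propose for the eliminations with $s\neq 7$. You reduce the parity constraint to the per-transposition form of Lemma~\ref{l0}: each transposition of $\beta_0$ lies inside an even orbit or exchanges two isomorphic orbits. That is not enough to kill $s=8$: there the orbits on $O_2\setminus\{b\}$ have sizes $8$ and $1$, and a single transposition of $\beta_0$ inside the $8$-orbit is perfectly consistent with Lemma~\ref{l0} and makes $\rho_0=(a,b)(x,y)$ even. What actually closes this case (and the analogous step the paper uses at $s=6$) is the stronger consequence of the commuting property: $\rho_0$ centralizes $G_{0,1}$, so $\mathrm{Fix}(\rho_0)$ is a union of $G_{0,1}$-orbits, and hence $\beta_0$ is fixed-point-free on any orbit it touches. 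On an $8$-orbit this forces four transpositions, making $\rho_0$ a $5$-transposition, hence odd. You should state and use this orbit-wise version explicitly. Beyond that, the cases $s\in\{5,6\}$, which you defer to "analogous but more intricate arguments", are where most of the paper's actual work lies: each admissible orbit partition produces specific candidate subgraphs that must be eliminated one by one with case-specific contradictions ($G_4$ becoming transitive, a forbidden $\{2,3\}$-edge, or the fixed-point-free parity argument above), so the proposal is an accurate plan rather than a complete proof.
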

\begin{proof}
  
The group $\tilde{G}=\langle \beta_0,\rho_1,\rho_2,\rho_3,\rho_4\rangle$ is a transitive group on $10$ points and $\tilde{G}_0=G_0$ is transitive. Hence Lemmas~\ref{l0} and \ref{l2} can be used to restrict the sizes of the orbits of $G_{0,1}$. 
 Let $s$ be the size of the largest orbit of $\mathcal{G}_{0,1}$. Recall that $a$ and $b$ (the vertices of the $0$-split) are isolated vertices of $\mathcal{G}_{0,1}$. Hence $s\leq 9$.
 
 \underline{$s=4$}: In this case the  $\{2,4\}$-square (that exists by Proposition~\ref{5G} (c)) determines a maximal component of $\mathcal{G}_{0,1}$. As, by Proposition~\ref{5G} (a), $\rho_4$ is a $2$-transposition and the $\{2,4\}$-square must be connected to the rest of the permutation graph, this can only happen via a $3$-edge. But then $s>4$, a contradiction.
  
 \underline{$s=5$}:  By what we have proved in the previous case, we have the following possibilities for the largest orbit of size 5.
 \begin{small}
 $$\xymatrix@-1.8pc{&&*{\bullet}\ar@{=}[rr]^{\{2,3\}} && *{\bullet}\\ *{\bullet}\ar@{-}[rr]_3&&*{\bullet}\ar@{-}[rr]_2 \ar@{-}[u]^4&& *{\bullet}\ar@{-}[u]_4 } \quad\quad \xymatrix@-1.7pc{&&*{\bullet}\ar@{-}[rr]^2\ar@{-}[rrd]^(0.7)3 && *{\bullet}\\ *{\bullet}\ar@{-}[rr]_3&&*{\bullet}\ar@{-}[rr]_2 \ar@{-}[u]^4&& *{\bullet}\ar@{-}[u]_4 } \quad\quad 
 \xymatrix@-1.8pc{*{\bullet}\ar@{-}[rr]^3&&*{\bullet}\ar@{-}[rr]^2 && *{\bullet}\\ && *{\bullet}\ar@{-}[rr]_2 \ar@{-}[u]^4&& *{\bullet}\ar@{-}[u]_4 }$$ 
 \end{small}
 In the first case, $\mathcal{G}$ has a $\{2,3\}$-edge, contradicting Lemma~\ref{5G} (b). In the second graph $G_4$ is transitive, a contradiction.  Hence, we may assume the largest orbit of size $5$ is given by the third graph. 
 Since we cannot have an orbit with a $\{2,3\}$-double edge (by Lemma~\ref{5G} (b)) and we need a pendant edge with label $2$, then the non-trivial components of $\mathcal{G}_{0,1}$ are as follows.
 \begin{small}
 $$\xymatrix@-1.7pc{*{\bullet}\ar@{-}[rr]^3&&*{\bullet}\ar@{-}[rr]^2 && *{\bullet}&& *{\bullet}\ar@{-}[rr]^2&&*{\bullet}\ar@{-}[rr]^3 && *{\bullet}\ar@{-}[rr]^2&& *{\bullet}\\ && *{\bullet}\ar@{-}[rr]_2 \ar@{-}[u]^4&& *{\bullet}\ar@{-}[u]_4 }$$
 \end{small}
 
By similar arguments to the ones given by Lemmas~\ref{l1} and \ref{l2}, we  get  that $\mathcal{G}$ has an odd number of $0$-edges, contradiction. 

  \underline{$s=6$}: In this case the largest component must contain the $\{2,4\}$-square.  If the $3$-edges are both incident to the $\{2,4\}$-square then we get the following possibilities for that component of size 6 (keep in mind that $\rho_2$ is a 4-transposition).
\begin{small}   
   $$\xymatrix@-1.7pc{*{\bullet}\ar@{-}[rr]^3 &&*{\bullet}\ar@{-}[rr]^2 && *{\bullet}\\ *{\bullet}\ar@{-}[rr]_3\ar@{-}[u]^2 &&*{\bullet}\ar@{-}[rr]_2 \ar@{-}[u]_4 && *{\bullet}\ar@{-}[u]_4}\quad\quad \xymatrix@-1.7pc{*{\bullet}\ar@{-}[rr]^3 &&*{\bullet}\ar@{-}[rr]^4 && *{\bullet}\\ *{\bullet}\ar@{-}[rr]_3\ar@{-}[u]^2 &&*{\bullet}\ar@{-}[rr]_4 \ar@{-}[u]_2 && *{\bullet}\ar@{-}[u]_2}  \quad\quad \xymatrix@-1.7pc{
  & *{\bullet}\ar@{-}[dr]^2 \ar@/^/@{-}[rr]^3 && *{\bullet}\ar@{-}[dd]^2\\
  *{\bullet}\ar@{-}[ur]^4  \ar@{-}[dr]_2 && *{\bullet}\\
   & *{\bullet}\ar@{-}[ur]_4\ar@/_/@{-}[rr]_3 && *{\bullet}}$$
   \end{small}
  But then $G_4$ is transitive  or, by Lemma~\ref{5G} (b), $\rho_2$ and $\rho_3$ commute. In any case we get a contradiction.
   
Now suppose that there exists  only one $3$-edge incident to $\{2,4\}$-square.  Having in mind that $\mathcal{G}_0$ does not have fracture graph and does not have $\{2,3\}$-edges (by Lemma~\ref{5G}), then the largest component of $\mathcal{G}_{0,1}$ is as follows.
\begin{small}
$$\xymatrix@-1.7pc{*{\bullet}\ar@{-}[rr]^2&&*{\bullet}\ar@{-}[rr]^3&&*{\bullet}\ar@{-}[rr]^2 && *{\bullet}\\ && && *{\bullet}\ar@{-}[rr]_2 \ar@{-}[u]^4&& *{\bullet}\ar@{-}[u]_4 }$$
\end{small}

But now, as by Lemma~\ref{5G} (b) $\mathcal{G}$ does not have double $\{2,3\}$-edges and $G_4$ is not transitive, the non-trivial orbits of $\mathcal{G}_{0,1}$ are as follows.
\begin{small}
 $$\xymatrix@-1.7pc{*{\bullet}\ar@{-}[rr]^2&&*{\bullet}\ar@{-}[rr]^3&&*{\bullet}\ar@{-}[rr]^2 && *{\bullet}&& *{\bullet}\ar@{-}[rr]^2&& *{\bullet}\ar@{-}[rr]^3 && *{\bullet}\\ && && *{\bullet}\ar@{-}[rr]_2 \ar@{-}[u]^4&& *{\bullet}\ar@{-}[u]_4 *}$$
 \end{small}
 If $\rho_0$ acts non-trivially on these orbits, then it can only happen on the largest orbit. However, this implies that $\mathcal{G}$ has a double  $\{0,3\}$-edge, a contradiction by Lemma~\ref{5G}(b). Hence, $\rho_0$ is odd, a contradiction.
   
  \underline{$s=7$}: The largest orbit with size 7 can be either the ones presented in the statement of this proposition or the following. 
  \begin{small}
 $$\xymatrix@-1.7pc{*{\bullet}\ar@{-}[rr]^3 &&*{\bullet}\ar@{-}[rr]^2&&*{\bullet}\ar@{-}[rr]^3&&*{\bullet}\ar@{-}[rr]^2 && *{\bullet}\\ && &&  && *{\bullet}\ar@{-}[rr]_2 \ar@{-}[u]^4&& *{\bullet}\ar@{-}[u]_4 }$$ 
 \end{small}
 In any case the other non-trivial component of   $\mathcal{G}_{0,1}$ is a single $2$-edge. In the case above it is not possible to connect the two components of  $\mathcal{G}_{0,1}$ with $1$-edges (recall that the graph has exactly two $4$-edges and two $3$-edges by Lemma~\ref{5G}).
 
  \underline{$s\in\{8,9\}$}: In this case $\rho_0$ is odd.
 
The only possibilities for  $\mathcal{G}_{0,1}$ are the ones stated in the proposition.
\end{proof}


\begin{prop}\label{isplit}
 Let $G$ be an even  group of degree $11$ and rank $r\leq 5$.
 If $\Gamma$ has a fracture graph then either $\Gamma$ has a $2$-fracture graph or $\mathcal{G}$ is as follows.
$$ \xymatrix@-1.5pc{  *{\bullet} \ar@{-}[rr]^0 &&*{\bullet} \ar@{-}[rr]^1&&*{\bullet}  \ar@{-}[rr]^2 &&*{\bullet}\ar@{-}[d]_3  \ar@{-}[rr]^1&&*{\bullet} \ar@{=}[rr]^{\{0,2\}}\ar@{-}[d]_3&&*{\bullet}\ar@{-}[d]^3\\
 &&&& &&*{\bullet} \ar@{-}[rr]_1&&*{\bullet} \ar@{-}[rr]^0\ar@{-}[d]^2&&*{\bullet}\ar@{-}[d]_2\\
 &&&&&&&&*{\bullet}  \ar@{-}[rr]\ar@{=}[rr]_{\{0,1,3\}}&&*{\bullet}}$$
\end{prop}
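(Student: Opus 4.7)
The plan is to assemble the classification results accumulated so far. First I invoke Proposition~\ref{not2split}: if $\Gamma$ has a fracture graph then, up to duality, $\mathcal{G}_0$ admits a $2$-fracture graph. If $\mathcal{G}$ itself admits a $2$-fracture graph there is nothing to prove, so I assume $\mathcal{G}$ has a split; by Proposition~\ref{all} this split must have label $0$ and one of its endpoints, say $a$, forms a trivial $G_0$-orbit, with $b$ the other endpoint. The task is to show that, under these hypotheses, $\mathcal{G}$ is the displayed graph.

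Next I handle the ranks separately. For $r=4$ I iterate over the two shapes of $\mathcal{G}_{0,1}$ from Proposition~\ref{Gamma01_rank4_orbits} and attempt to re-insert the four $0$-edges and four $1$-edges required by Lemma~\ref{4G}. The tools that drive the analysis are: Lemma~\ref{Olivia1} (only labels $i\pm 1$ may connect $\mathrm{Fix}(\rho_i)$ to its complement), the commuting relations $(\rho_0\rho_2)^2=(\rho_0\rho_3)^2=(\rho_1\rho_3)^2=1$, the parity condition that every $\rho_i$ be even on $11$ points, and Lemma~\ref{claudio} (to preclude accidental splits). In every configuration I expect that legal placements either supply a second $0$-edge and a second $1$-edge between different $G_0$- respectively $G_1$-orbits --- which, combined with the $2$-fracture graph of $\mathcal{G}_0$, yields a $2$-fracture graph for $\Gamma$ --- or collapse to the displayed graph. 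The $\{0,1,3\}$-triple edge in the exceptional picture should arise because, in the hexagonal component of Proposition~\ref{Gamma01_rank4_orbits}(1), the only parity- and commutation-compatible way to supply the remaining $0$- and $1$-edges is to superimpose them with a $3$-edge on a single pair; the $\{0,2\}$-double edge is then forced by the required count of $2$-edges in Lemma~\ref{4G}(d).

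For $r=5$ I repeat the procedure with the three configurations of Proposition~\ref{Gamma01_rank5_orbits}, using Lemma~\ref{5G} for the edge counts and the structural constraints (existence of a $\{2,4\}$-square, absence of $\{3,4\}$-squares and of double edges labelled $3$ or $4$). The extra generator $\rho_4$ together with these rigidity constraints should leave enough room on both sides of every potential $0$-split that a second $0$-fracture edge always exists, so a $2$-fracture graph emerges in every admissible rank-$5$ case and no exception survives.

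The main obstacle is the case-by-case bookkeeping: with five orbit templates and up to eight edges to place per template, one must enumerate the commuting-compatible placements and verify, for each, either the existence of a $2$-fracture graph or that we have obtained the exceptional diagram. The delicate point is the rank-$4$ hexagonal template, where establishing the uniqueness of the displayed graph requires simultaneously enforcing transitivity of $G$, parity of all four generators, Lemmas~\ref{Olivia1}--\ref{claudio}, and the commutation relations; this is where the analysis will be tightest.
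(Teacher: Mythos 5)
There is a genuine gap: your sieve stops too early because it never invokes the intersection property. The combinatorial constraints you list (parity of the generators, the commuting relations, Lemma~\ref{Olivia1}, Lemma~\ref{claudio}, transitivity, and the edge counts of Lemmas~\ref{4G} and~\ref{5G}) do \emph{not} reduce the rank-$4$ enumeration to the single displayed graph, nor do they eliminate every rank-$5$ configuration. In the paper's analysis the two $\mathcal{G}_{0,1}$ templates of Proposition~\ref{Gamma01_rank4_orbits} admit, besides the exceptional graph, ten further completions (the graphs (E1)--(E10) of the appendix), and the three rank-$5$ templates of Proposition~\ref{Gamma01_rank5_orbits} admit three completions (E11)--(E13). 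All thirteen are perfectly legal sggi's: they are even, connected, satisfy the commuting property, have a genuine $0$-split, and in fact are independent generating sets for $A_{11}$. They are excluded only because they fail condition (4) of the string C-group definition, which is proved in Proposition~\ref{notC} via Lemma~\ref{IPF4} (exhibiting $A_D\leq G_0\cap G_3$ for a suitable $G_{0,3}$-orbit $D$, against $G_{0,3}$ being dihedral). Your proposal contains no step of this kind, so your expectation that ``no exception survives'' at rank $5$, and that rank $4$ collapses to the displayed graph by bookkeeping alone, is false as stated; the classification is incomplete without the intersection-property argument.

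A secondary confusion: your dichotomy ``either the placement supplies a second $0$-edge between different $G_0$-orbits, yielding a $2$-fracture graph, or it collapses to the displayed graph'' is incoherent in the regime you have set up. Once you assume $0$ is the label of a split, there is by definition exactly one $0$-edge between distinct $G_0$-orbits, so no completion can ``supply a second'' one; the correct trichotomy for each placement is: contradiction with parity/commutation/transitivity, or one of the appendix graphs (to be killed by Proposition~\ref{notC}), or the $11$-cell graph. Also, the fact that the split label lies in $\{0,r-1\}$ comes from the proposition treating the trivial-orbit case together with Proposition~\ref{not2split}, not from Proposition~\ref{all} alone, which only asserts that $G_i$ has a trivial orbit.
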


\begin{proof}
If $\Gamma$ does have a split then either $0$ or $r-1$ is a label of a split but not both by Propositions~\ref{all} and \ref{not2split}.
Suppose without loss of generality that  $\mathcal{G}_0$ has a $2$-fracture graph. By Propositions~\ref{4G} and~\ref{Gamma01_rank4_orbits} the only possibilities when the rank of $\Gamma$ is equal to $4$ are  the sggi's (E1)-(E10) of the appendix  which are not string C-groups, or  $\Gamma$ has the permutation representation graph given in the statement of this theorem. 

If $r=5$, then by Propositions~\ref{5G} and~\ref{Gamma01_rank5_orbits} we find the possibilities  (E11), (E12) and (E13) of  the appendix which again are not string C-groups, by Proposition~\ref{notC}, a contradiction.
 \end{proof}

Later we will see that the permutation representation graph given in the above proposition corresponds to the $11$-cell.

\section{When $\mathcal{G}$ has a $2$-fracture graph}\label{2fracture11}


In this section we assume that $\Gamma=(G, \{\rho_0,\ldots, \rho_r\})$ with $r\in\{4,5\}$, is a string C-group representation for an even transitive group of degree $11$ having a permutation representation graph $\mathcal{G}$ that admits a $2$-fracture graph.

In Proposition 4.9 of  \cite{2023CFL} the authors give a classification of the  string C-groups of degree $n$ admitting a $2$-fracture of rank at least  $(n-1)/2$. 
If $\Gamma$ has rank $5$, then a $2$-fracture graph has exactly $10$ edges and $11$ vertices. Then if it is connected it is a tree, otherwise there is a $2$-fracture graph having exactly two components, one being a tree and the other one having an alternating square \cite[Proposition 4.12]{2017CFLM}. Having this in mind it is possible to find all possibilities for $\mathcal{G}$. This was precisely the idea behind the classification given in Proposition 4.9 of \cite{2023CFL}.
A consequence of this is the following.

\begin{prop}\label{2frac5}
If $\mathcal{G}$ has a $2$-fracture graph, then $r\neq 5$.
\end{prop}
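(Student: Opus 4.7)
The plan is to apply \cite[Proposition 4.9]{2023CFL}, which classifies all string C-groups of degree $n$ admitting a $2$-fracture graph of rank at least $(n-1)/2$. For $n=11$ and $r=5$ we have $r=(n-1)/2$, so the hypothesis of that proposition is satisfied. The basic counting is clean: a $2$-fracture graph in this setting has exactly $2r=10$ edges on $11$ vertices, so if it is connected it must be a spanning tree, and otherwise by \cite[Proposition 4.12]{2017CFLM} it consists of exactly two components, one a tree and the other containing a unique alternating $\{i,j\}$-square with $|i-j|\geq 2$. The classification therefore reduces the problem to inspecting a short, explicit list of candidate labelled graphs.

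For each candidate I would verify that it cannot correspond to an even transitive string C-group of degree $11$, by exhibiting one of the following obstructions: (a) the graph forces some generator $\rho_i$ to act as a $5$-transposition, hence as an odd permutation, contradicting that $G$ is even; (b) the group generated is intransitive; or (c) the intersection property fails via the criteria of Section~\ref{5} (in particular Lemma~\ref{IPF4}). The parity obstruction (a) is expected to be the most frequent, since in a tree or near-tree $2$-fracture graph on $11$ vertices with ten labelled edges split evenly among five labels, combined with the commuting relations, the cycle/edge balance typically forces one of the involutions to use all eleven points minus one — that is, to be a $5$-transposition.

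The main obstacle is simply the case-by-case bookkeeping: one must ensure that every shape permitted by \cite[Proposition 4.9]{2023CFL} for $n=11$, $r=5$ is examined, paying attention both to the tree case and to the disconnected case where the alternating $\{i,j\}$-square may interact in several ways with the second (tree) component. Once the list is exhausted and each configuration is shown to force an odd generator, intransitivity, or an intersection-property failure, the proposition follows.
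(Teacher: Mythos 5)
Your proposal follows essentially the same route as the paper: the authors likewise observe that a rank-$5$ $2$-fracture graph on $11$ vertices has $10$ edges, hence is a spanning tree or (by \cite[Proposition 4.12]{2017CFLM}) has two components one of which contains an alternating square, and then they invoke the classification of \cite[Proposition 4.9]{2023CFL} for rank at least $(n-1)/2$, stating the proposition as an immediate consequence without redoing the case analysis. Your extra discussion of how each candidate would be eliminated (odd generator, intransitivity, or failure of the intersection property) is just a more explicit rendering of the verification already contained in that cited classification.
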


We now consider the case $r=4$. 

\begin{lemma}\label{lem:2fractaltsq}
Suppose that $\rho_i$ is a $4$-transposition. 
If $\rho_j$ swaps a pair of vertices of  $\mathrm{Fix}(\rho_i)$  and $| j - i| \neq 1$ then $\mathcal{G}$ has an $\{i,j\}$-edge and an $\{i,j\}$-square. In particular, $\rho_j$ is a $4$-transposition.
\end{lemma}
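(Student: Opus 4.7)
The plan is to exploit the commuting property between $\rho_i$ and $\rho_j$, which holds because $|i-j| \neq 1$. First I would observe that since commuting involutions preserve each other's fixed-point sets, $\rho_j$ maps $\mathrm{Fix}(\rho_i)$ to itself; since $\rho_i$ is a $4$-transposition on $11$ points, $|\mathrm{Fix}(\rho_i)| = 3$, so $\rho_j$ swaps the pair $(x, y)$ and must fix the remaining vertex of $\mathrm{Fix}(\rho_i)$. By Lemma~\ref{Olivia1}, any $j$-edge between $\mathrm{Fix}(\rho_i)$ and $\overline{\mathrm{Fix}(\rho_i)}$ would require $|j - i| = 1$, which is excluded; so every remaining $\rho_j$-transposition lies inside the $8$-point set $\overline{\mathrm{Fix}(\rho_i)}$, which $\rho_i$ partitions into four pairs.

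Next I would classify each $\rho_j$-transposition $(u, v) \subseteq \overline{\mathrm{Fix}(\rho_i)}$: either $v = u\rho_i$, giving a double $\{i,j\}$-edge, or $v \neq u\rho_i$, in which case commutativity forces $(u\rho_i, v\rho_i)$ to also be a $\rho_j$-transposition, completing an $\{i,j\}$-square on $\{u, v, u\rho_i, v\rho_i\}$. Writing $a$ for the number of double $\{i,j\}$-edges and $b$ for the number of $\{i,j\}$-squares arising this way, $\rho_j$ has exactly $1 + a + 2b$ transpositions (the $1$ being $(x,y)$). Since $G$ is even, $\rho_j$ has an even number of transpositions, forcing $a$ odd; combined with $1 + a + 2b \leq 5$, this leaves $(a,b) \in \{(1,0),\, (1,1),\, (3,0)\}$.

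The last step, where I expect the main obstacle to lie, is eliminating cases $(1, 0)$ and $(3, 0)$. The cleanest approach uses the $2$-fracture graph assumption of Section~\ref{2fracture11}, which requires at least two $j$-edges between distinct $G_j$-orbits. Each double $\{i,j\}$-edge $(u_k, v_k)$ is simultaneously an $i$-edge, and since $\rho_i \in G_j$, its endpoints lie in the same $G_j$-orbit; hence no double $\{i,j\}$-edge contributes to the $2$-fracture count for label $j$. In cases $(1, 0)$ and $(3, 0)$ this leaves only the $j$-edge $(x, y)$ as a possible crossing, giving at most one $j$-edge between different $G_j$-orbits, a contradiction. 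Therefore $(a, b) = (1, 1)$, which simultaneously yields the desired double $\{i,j\}$-edge and $\{i,j\}$-square, and makes $\rho_j$ a $4$-transposition, as the ``in particular'' clause asserts.
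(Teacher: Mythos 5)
Your proof is correct and follows essentially the same route as the paper: $|\mathrm{Fix}(\rho_i)|=3$, Lemma~\ref{Olivia1} confines the remaining $\rho_j$-transpositions to $\overline{\mathrm{Fix}(\rho_i)}$, evenness forces an odd number of them there, and the $2$-fracture graph hypothesis rules out the configurations with no $\{i,j\}$-square. The paper compresses the final step into ``the rest follows from the fact that $\mathcal{G}$ has a $2$-fracture graph''; your observation that a double $\{i,j\}$-edge cannot join distinct $G_j$-orbits (since $\rho_i\in G_j$) is exactly the detail being left implicit there.
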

\begin{proof}
In this case $|\mathrm{Fix}(\rho_i)|=3$.
As $\rho_j$ is even, by Lemma~\ref{Olivia1}, $\rho_j$ must swap an odd number of pairs of vertices in $\overline{\mathrm{Fix}(\rho_i)}$.
The rest follows from the fact that $\mathcal{G}$ has a $2$-fracture graph.
\end{proof}

\begin{prop}\label{2fract}
If $\mathcal{G}$ has a $2$-fracture graph, then $r\neq 4$
\end{prop}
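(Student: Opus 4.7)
The plan is to derive a contradiction by enumerating all possibilities for $\mathcal{G}$ when $r=4$ and $\mathcal{G}$ admits a 2-fracture graph, then showing each reduces either to a parity/connectivity violation or to one of the permutation representation graphs (E1)--(E10) of the appendix, which by Proposition~\ref{notC} are not string C-groups. First I would pin down the transposition profile: since $G$ consists of even permutations of degree $11$, each $\rho_i$ is either a 2-transposition or a 4-transposition. Because $\mathcal{G}$ is connected (as $G$ is transitive) on $11$ vertices it has at least $10$ edges, whereas a 2-fracture graph contributes only $8$ edges, so at least one of the $\rho_i$ must be a 4-transposition; moreover, if $\rho_i$ is a 2-transposition then both of its $i$-edges lie between distinct $G_i$-orbits and already account for all $i$-edges of the 2-fracture graph.

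Next I would apply Lemma~\ref{lem:2fractaltsq} to whichever $\rho_i$ are 4-transpositions. Whenever $\rho_i$ is a 4-transposition, the set $\mathrm{Fix}(\rho_i)$ has $3$ vertices, and Lemma~\ref{Olivia1} forbids any $j$-edge with $|i-j|>1$ between $\mathrm{Fix}(\rho_i)$ and its complement. Combining these, for any nonconsecutive $j$ the generator $\rho_j$ either fixes $\mathrm{Fix}(\rho_i)$ pointwise or is itself a 4-transposition producing an $\{i,j\}$-square. Together with the parity constraints on $G_{0,1}$-orbits furnished by Lemmas~\ref{l0}, \ref{l1}, and \ref{l2}, and the commuting-property restrictions on $\mathcal{G}_{0,1}$ and $\mathcal{G}_{2,3}$ analogous to those used in Proposition~\ref{Gamma01_rank4_orbits}, this pins down the possible shapes of the non-trivial components of $\mathcal{G}_{0,1}$ (and dually of $\mathcal{G}_{2,3}$) to a short list.

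I would then case split on $T:=\{i:\rho_i\text{ is a }4\text{-transposition}\}$. Small $|T|$ is handled by edge count together with the no-split assumption (which forces two $i$-edges between distinct $G_i$-orbits for every $i$): when $|T|=1$ the connectivity deficit rules it out, and when $T=\{i,i+1\}$ the commuting property forces the two halves of $\mathcal{G}$ to be joined in a way that either creates an $i$-split or makes some $G_j$ transitive. For $|T|\geq 2$ with a nonconsecutive pair, Lemma~\ref{lem:2fractaltsq} produces the required $\{i,j\}$-squares; combined with the 2-fracture bound on $i$-edges between orbits, this leaves only the explicit configurations (E1)--(E10). Applying Proposition~\ref{notC} to each gives the failure of the intersection property, and hence $r\neq 4$.

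The main obstacle is the bookkeeping in the case split: one must keep simultaneous track of evenness, the $11$-vertex connectivity requirement, the no-split condition via the 2-fracture graph, the commuting squares forced by Lemma~\ref{lem:2fractaltsq}, and the limited number of $i$-edges available for each label. Threading these constraints so that one reduces cleanly to the finite appendix list---rather than drowning in subcases---is the delicate step; as in the split analysis of Section~\ref{even11split}, the key is to exploit the 3-point fixed set of each 4-transposition early to collapse the combinatorics before the enumeration becomes unmanageable.
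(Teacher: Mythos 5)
Your overall strategy (enumerate the possibilities, reduce to a finite appendix list, and kill the survivors with Proposition~\ref{notC}) matches the paper's, but your case analysis as organized would not close, because you have inverted which configuration actually survives the combinatorial sieve. You claim that the consecutive case $T=\{i,i+1\}$ is eliminated because the commuting property forces a split or makes some $G_j$ transitive, and that the appendix graphs arise from a \emph{nonconsecutive} pair of $4$-transpositions via Lemma~\ref{lem:2fractaltsq}. The paper does the opposite: the assumption that $\rho_0$ (or, dually, $\rho_3$) is a $4$-transposition is what gets destroyed, by a three-way case analysis on how $\mathrm{Fix}(\rho_0)=\{A,B,C\}$ attaches to the rest of the graph, and the configurations surviving every combinatorial constraint are exactly those with $\rho_0,\rho_3$ being $2$-transpositions and $\rho_1,\rho_2$ being $4$-transpositions, i.e.\ $T=\{1,2\}$. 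These ten survivors are the graphs (F1)--(F10) --- not (E1)--(E10), which all contain a $0$-split and therefore cannot occur when $\mathcal{G}$ admits a $2$-fracture graph --- and they are genuine independent generating sets of $A_{11}$ that no parity, connectivity or split argument can exclude; only the failure of the intersection property (Proposition~\ref{notC}) disposes of them. Moreover, in the surviving case the $\{0,2\}$- and $\{1,3\}$-squares are not produced by Lemma~\ref{lem:2fractaltsq} (each relevant pair consists of one $2$- and one $4$-transposition, so that lemma does not apply); they come from inclusion--exclusion ($8+4-11=1$ common moved point), commutativity, and Lemma~\ref{claudio} ruling out the double-edge alternative.

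Two further points. First, $|T|=1$ is not ruled out by a ``connectivity deficit'': one $4$-transposition and three $2$-transpositions give exactly $10$ edges on $11$ vertices, which can perfectly well be a tree; the paper needs the additional argument that $\rho_3$ would then have to fix $\overline{\mathrm{Fix}(\rho_1)}$ pointwise and hence could not be a $2$-transposition. Second, your appeal to Lemmas~\ref{l1} and~\ref{l2} and to an analysis of $\mathcal{G}_{0,1}$ in the style of Proposition~\ref{Gamma01_rank4_orbits} is not available here: those tools are deployed in Section~\ref{even11split} under the hypothesis that there is a $0$-split with a trivial orbit, which is what makes the auxiliary degree-$10$ group transitive with $\tilde G_0=G_0$ transitive on the remaining ten points. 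In the $2$-fracture case every maximal parabolic is intransitive and there is no split, so those statements do not apply as written and the reduction to a ``short list'' of $\mathcal{G}_{0,1}$-shapes is unjustified.
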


\begin{proof}
The connectedness of $\mathcal{G}$ implies that  the generating set of $G$ contains at least one permutation which is a $4$-transposition, suppose first it is $\rho_0$. 
Let $\mathrm{Fix}(\rho_0) = \{ A,B,C \}$.
In this case, $\rho_1$ is the only generator which may interchange  a vertex of $\mathrm{Fix}(\rho_0)$ with  a vertex of $\overline{\mathrm{Fix}(\rho_0)}$.
Hence up to a relabelling of the points that are fixed by $\rho_0$, we have three cases: $\{A,B\}$ is a $3$-edge (Case 1); $\{A,B\}$ is a $2$-edge (Case 2); the vertices $A$, $B$ and $C$ are fixed by $\rho_2$ and $\rho_3$ (Case 3).
Let us deal with each case separately.

\underline{Case 1}:
By Lemma~\ref{lem:2fractaltsq} $\mathcal{G}$ has  $\{0,3\}$-square, a $\{0,3\}$-edge and another $0$-edge that we denote by $\{D,F\}$. 
Thus there exists a $2$-fracture graph of $\mathcal{G}$ containing the  $\{0,3\}$-square.
The $\{0,3\}$-edge must (for transitivity) be connected to other vertices of the graph. 
It cannot be at distance one from the $\{0,3\}$-square for otherwise the edges of the $\{0,3\}$-square would not belong to a $2$-fracture graph.
If it is at distance one from the other $0$-edge $\{D,F\}$ then it must be via a $2$-edge. Then we get $\{0,2\}$-square (with an extra $3$-edge) and a $\{0,3\}$-square that cannot be at distance one from each other. Thus both squares must be at distance one (via a $1$-edge) from one of the vertices of the set $\{A,B,C\}$ as shown in the following graph, on the left, where the dashed line can only be crossed by $1$-edges. 
\begin{small}
\[\begin{array}{cc}
\xymatrix@-1.5pc{&& &&  *{\bullet}\ar@{=}[d]_{\{0,3\}} \ar@{-}[rr]^2&& *+[o][F]{D} \ar@{-}[d]^0&&*{\bullet}  \ar@{-}[d]^0  \ar@{-}[rr]^3 &&  *{\bullet} \ar@{-}[d]^0&&\\
                                  &&&& *{\bullet} \ar@{-}[rr]_2 && *+[o][F]{F}  &&*{\bullet}\ar@{-}[rr]_3    &&*{\bullet}&& \\
1\updownarrow \ar@{--}[rrrrrrrrrrrr]&&&&&&&&&&&&\\
&&&& *+[o][F]{A} \ar@{-}[rr]_3&& *+[o][F]{B} && *+[o][F]{C}&&&& } 
&
\xymatrix@-1.4pc{&& &&  *{\bullet}\ar@{=}[d]_{\{0,3\}}   \ar@{-}[rr]^2&& *{\bullet}\ar@{-}[d]^0&&*{\bullet}  \ar@{-}[d]^0  \ar@{-}[rr]^3 &&  *{\bullet} \ar@{-}[d]^0&&\\
                                  &&&& *{\bullet} \ar@{-}[rr]_2 && *{\bullet} &&*{\bullet}\ar@{-}[rr]_3    &&*{\bullet}&& \\
1\updownarrow \ar@{--}[rrrrrrrrrrrr]&&&&&&&&&&&&\\
&&&& *{\bullet}\ar@{-}[rr]_3\ar@{-}[uurrrr]&& *{\bullet}\ar@{-}[rr]_2\ar@{-}[uurrrr]&&*{\bullet}\ar@{-}[uull] &&&& } \end{array}\]
\end{small}
Now $C$ cannot be at distance one from $\{0,3\}$-square thus, for connectedness we get the graph on the right and no more edges can be added for otherwise $\mathcal{G}$ does not have a $2$-fracture graph. Hence $\rho_1$ and $\rho_2$ are odd, a contradiction.
Hence the double $\{0,3\}$-edge cannot be part of a $\{0,2\}$-square and only $A$ and $B$ are at distance one from the $\{0,3\}$-edge.

This forces the existence of $1$-edges from the $\{0,3\}$-edge  to vertices of  $\{A,B,C\}$. 
That is only possible if $\mathcal{G}$ has a $\{1,3\}$-square containing the edge $\{A,B\}$.
Now  connectedness implies $\mathcal{G}$ contains the following graph where the edges that cross the dashed lines have label $1$. 
\begin{small}
$$ \xymatrix@-1.5pc{&&&&   &&&&*{\bullet} \ar@{-}[rr]^2\ar@{-}[d]^0 && *{\bullet}  \ar@{-}[d]^0  \ar@{-}[rr]^3 &&  *{\bullet} \ar@{-}[d]^0&&&&\\
                                  &&&& *{\bullet} \ar@{=}[rr]^{\{0,3\}}\ar@{-}[dd]  && *{\bullet}\ar@{-}[dd]  &&*{\bullet}\ar@{-}[rr]_2  \ar@{-}[dd]   &&*{\bullet}\ar@{-}[rr]_3 &&*{\bullet}&&&& \\
1\updownarrow \ar@{--}[rrrrrrrrrrrrrrrr]&&&&&&&&&&&&&&&&\\
&&&& *{\bullet}\ar@{-}[rr]_3&& *{\bullet}\ar@{-}[rr]_2 && *{\bullet}&&&&&& &&} $$
\end{small}

 As $G$ is even, there must be another single $1$-edge, forming a $\{1,3\}$-edge, but then $G_3$ is transitive, giving a contradiction with the fact that $\Gamma$ has a fracture graph.

\underline{Case 2}: Let $\{A,B\}$ be a $2$-edge of $\mathcal{G}$.
By Lemma \ref{lem:2fractaltsq} $\mathcal{G}$ has a $\{0,2\}$-square and  a  $\{0,2\}$-edge. Moreover the $\{0,2\}$-square belongs to a $2$-fracture graph.
As Case 1 gives a contradiction we may assume that  $A,\,B,\,C\in \mathrm{Fix}(\rho_3)$.  As $\mathcal{G}$ has a 2-fracture graph, $\mathcal{G}$ must have exactly two $3$-edges. This gives the following two possibilities for $\mathcal{G}_{1}$.

\begin{small}
\[\begin{array}{cc}
\xymatrix@-1.5pc{&&&   &&&&*{\bullet} \ar@{-}[rr]^2\ar@{-}[d]^0 && *{\bullet}  \ar@{-}[d]^0  \ar@{-}[rr]^3 &&  *{\bullet} \ar@{-}[d]^0&&&\\
                                  &&& *{\bullet} \ar@{=}[rr]^{\{0,2\}}&& *{\bullet} &&*{\bullet}\ar@{-}[rr]_2 &&*{\bullet}\ar@{-}[rr]_3 &&*{\bullet}&&& \\
1\updownarrow \ar@{--}[rrrrrrrrrrrrrr]&&&&&&&&&&&&&&\\
&&& *+[o][F]{A} \ar@{-}[rr]_2&& *+[o][F]{B} && *+[o][F]{C}&&&&&& &} &

\xymatrix@-1.5pc{& && *{\bullet} \ar@{-}[rr]^0\ar@{-}[d]_3 && *{\bullet}  \ar@{-}[d]^3   &&*{\bullet}\ar@{-}[rr]^2 \ar@{-}[d]_0&&*{\bullet} \ar@{-}[d]^0&&&\\
                                  &&& *{\bullet} \ar@{=}[rr]_{\{0,2\}}&& *{\bullet}  &&*{\bullet}\ar@{-}[rr]_2&&*{\bullet}&&& \\
1\updownarrow \ar@{--}[rrrrrrrrrrrr]&&&&&&&&&&&&\\
&&& *+[o][F]{A} \ar@{-}[rr]_2&& *+[o][F]{B} && *+[o][F]{C}&&&&& } 

\end{array}\]
\end{small}

Transitivity implies that $\mathcal{G}$ has at least four $1$-edges (three of those connecting the graph). In addition the $1$-edges connecting different components cannot be adjacent to the $3$-edges. Then, in the case on the left, evenness implies the existence of a $\{1,3\}$-edge, making $G_3$ transitive. In the case on the right, the $\{0,3\}$-square cannot be connected to the rest of the graph. Each case leads to a contradiction.

\underline{Case 3}: In this case $A$, $B$ and $C$ are vertices of degree one in $\mathcal{G}$. Thus  $\mathcal{G}$ has at least three $1$-edges (crossing the dotted line in the graph below).
As $\rho_3$ commutes with $\rho_1$, the $3$-edges cannot intersect the $1$-edges which cross the dotted line, thus $\rho_3$ is a $2$-transposition.
Hence we get the following subgraph of $\mathcal{G}$. 
\begin{small}
$$ \xymatrix@-1.5pc{&&   &&&& &&&& *{\bullet}  \ar@{-}[d]_3  \ar@{-}[rr]^0&&  *{\bullet} \ar@{-}[d]^3\\
                                  && *{\bullet} \ar@{-}[rr]_0  && *{\bullet}\ar@{-}[dd]  &&*{\bullet}\ar@{-}[rr]_0  \ar@{-}[dd]   &&*{\bullet}\ar@{-}[dd]&&*{\bullet} \ar@{-}[rr]_0 &&  *{\bullet}&& \\
1\updownarrow \ar@{--}[rrrrrrrrrrrrrrrr]&&&&&&&&&&&&&&&&\\
&&&& *+[o][F]{A} && *+[o][F]{B}&& *+[o][F]{C}&&&&&& } $$
\end{small}
But the existence of a fourth $1$-edge, implies that  $\mathcal{G}$ has a $\{1,3\}$-edge, which by Lemma~\ref{claudio} and the fact the $\mathcal{G}$ has a 2-fracture graph leads to a contradiction.

Hence, the above cases contradict that $\rho_0$ is a $4$-transpositions. Thus $\rho_0$ is a $2$-transposition, and by duality  $\rho_3$ is also a $2$-transposition.
If $\rho_2$ is a $2$-transposition, then, for transitivity, $\rho_1$ must be a $4$-transposition.
In this case $\mathcal{G}$ has exactly ten edges, thus it is a tree.
Then $|\mathrm{Fix}(\rho_1)|=3$ and only $\rho_0$ and $\rho_2$ can swap vertices between $\mathrm{Fix}(\rho_1)$ and $\overline{\mathrm{Fix}(\rho_1)}$. But in order to avoid squares and double edges, $\rho_3$ fixes $\overline{\mathrm{Fix}(\rho_1)}$ pointwisely, thus $\rho_3$  cannot be a $2$-transposition, a contradiction.
Hence $\rho_1$ and $\rho_2$ are $4$-transpositions.
Therefore $\rho_1$ and $\rho_3$ must both act non-trivially on at least one point ($8+4-11=1$). 
Thus by Lemma~\ref{claudio} $\mathcal{G}$ has a $\{1,3\}$-square.
Similarly  $\mathcal{G}$ has a $\{0,2\}$-square.
Let $d$ be the distance between these two squares.
We have that $d\in\{2,4\}$.
If $d=2$ we get the first nine graphs of (F1) to (F9) of the appendix.
If $d=4$ we get only one possibility corresponding to graph (F10) of the appendix.
In any case $\Gamma$ is not a string C-group by Proposition~\ref{notC}, a contradiction.
\end{proof}



\section{A classification of even transitive string C-groups of degree $11$}\label{final}

The only transitive even groups of degree $11$ that we need to consider are $\PSL_2(11)$, $M_{11}$ and $A_{11}$, as those are the unique ones that can be generated by involutions.

\begin{prop}\label{4PSL}
If $r\in\{4,5\}$ and $G\cong  \PSL_2(11)$ then $\Gamma$ is the  abstract regular $4$-polytope known as the $11$-cell and has the permutation representation graph given in Theorem~\ref{isplit}.
\end{prop}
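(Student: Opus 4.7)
The plan is to combine the structural classification of Sections~\ref{even11split} and~\ref{2fracture11} with a short observation about the subgroup structure of $\PSL_2(11)$.

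First I would rule out the possibility that some maximal parabolic subgroup $G_j$ is transitive, so that $\mathcal{G}$ must admit a fracture graph and the earlier classification applies. Any transitive subgroup of $\PSL_2(11)$ on $11$ points has order divisible by $11$, and from the subgroup list of $\PSL_2(11)$ the only proper subgroups of order divisible by $11$ are $C_{11}$ and its normalizer $C_{11}\rtimes C_5$, both of odd order. Hence none of them contains any involution, let alone is generated by one, contradicting $G_j = \langle \rho_i : i\neq j\rangle$. So every $G_j$ is intransitive and $\mathcal{G}$ admits a fracture graph.

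Next, I would combine Propositions~\ref{isplit}, \ref{2frac5} and~\ref{2fract}. Since for $r\in\{4,5\}$ the graph $\mathcal{G}$ cannot admit a $2$-fracture graph, Proposition~\ref{isplit} forces $\mathcal{G}$ to coincide with the specific rank-$4$ graph displayed there. In particular this immediately rules out $r=5$ and pins down $r=4$ together with an explicit set of four involutions $\rho_0,\rho_1,\rho_2,\rho_3$.

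Finally, from the explicit graph one reads off $\rho_0,\ldots,\rho_3$ and computes directly that $\rho_0\rho_1$ and $\rho_2\rho_3$ have order $3$ while $\rho_1\rho_2$ has order $5$, yielding Schl\"afli type $\{3,5,3\}$, and that the generated group has order $660$, hence is $\PSL_2(11)$ as assumed. The intersection property is then checked via Proposition~\ref{arp1}: $\Gamma_0$ and $\Gamma_3$ are the two known rank-$3$ string C-group representations of $A_5$ (the hemi-dodecahedron $\{5,3\}$ and the hemi-icosahedron $\{3,5\}$), and the quotient count $|G_0\cap G_3|=|G_{0,3}|=10$ verifies the condition $G_0\cap G_3\cong G_{0,3}\cong D_{10}$. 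This matches exactly the presentation of the self-dual $11$-cell of type $\{3,5,3\}$.

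The main obstacle is the opening step: one must be confident that no proper subgroup of $\PSL_2(11)$ generated by involutions can be transitive on $11$ points. Once this is in hand, the proposition becomes a direct consequence of the earlier classification together with a routine verification on the displayed permutation representation graph.
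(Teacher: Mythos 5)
Your proposal is correct and follows the same skeleton as the paper's proof: show every maximal parabolic $G_j$ is intransitive so that $\mathcal{G}$ admits a fracture graph, invoke Propositions~\ref{2frac5} and~\ref{2fract} to exclude a $2$-fracture graph for $r\in\{4,5\}$, and then let Proposition~\ref{isplit} pin down the unique candidate graph, which forces $r=4$. Where you differ is in how two steps are justified. First, the paper disposes of transitive proper subgroups generated by involutions with a bare citation to the ATLAS, whereas you give the short self-contained argument (order divisible by $11$ forces containment in $C_{11}\rtimes C_5$, which is of odd order); this is a genuine improvement in readability and is exactly the kind of computer-free reasoning the paper advertises. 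Second, for the final identification the paper simply cites the known existence of the $11$-cell as a rank-$4$ polytope with automorphism group $\PSL_2(11)$ and concludes that the unique candidate graph must be its permutation representation graph, while you propose to verify directly from the displayed graph that the type is $\{3,5,3\}$, that $\Gamma_0$ and $\Gamma_3$ are the hemi-dodecahedron and hemi-icosahedron for $A_5$, and that $G_0\cap G_3=G_{0,3}\cong D_{10}$ so that Proposition~\ref{arp1} applies. Your route buys independence from the literature on the $11$-cell at the cost of a nontrivial hand computation: identifying $G_0\cong G_3\cong A_5$ and establishing $|G_0\cap G_3|=10$ (rather than $20$, $30$ or $60$) is asserted rather than carried out, and is the one place where you would need to supply detail comparable to the explicit element manipulations in Proposition~\ref{notC}. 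Since $\Gamma$ is assumed throughout Section~\ref{final} to be a string C-group representation of $G\cong\PSL_2(11)$, the paper's shortcut is legitimate, and your verification is best viewed as an optional consistency check confirming that the proposition is not vacuous.
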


\begin{proof}
The proper even transitive subgroups of  $\PSL_2(11)$ are not generated by involutions \cite{ATLAS}.
Thus if $G \cong \PSL_2(11)$ then $\mathcal{G}$ has a fracture graph. Moreover, by Propositions~\ref{2frac5} and \ref{2fract},  $\mathcal{G}$  has a split.

The $11$-cell is a well known $4$-polytope whose automorphism group is $\PSL_2(11)$ \cite{11cell}.
Consequently, the graph given in Proposition~\ref{isplit} must  be the  permutation representation graph of the $11$-cell.
\end{proof}

From Propositions~\ref{isplit}, ~\ref{2frac5}, \ref{2fract} and ~\ref{4PSL} we have the following.

\begin{coro}\label{casefrac}
If $G$ is an even transitive string C-group of degree $11$ with a fracture graph of rank $r\in\{3,4,5\}$ then one of the following two situations occurs:
\begin{itemize}
\item $\Gamma$ is the  abstract regular $4$-polytope known as the $11$-cell;
\item $\Gamma$ has rank $3$ and has a $2$-fracture graph.
\end{itemize}
\end{coro}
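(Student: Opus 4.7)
The plan is to assemble this corollary directly from the four earlier results. Given an even transitive string C-group $\Gamma$ of degree $11$ with rank $r\in\{3,4,5\}$ whose permutation representation graph $\mathcal{G}$ admits a fracture graph, I would first apply Proposition~\ref{isplit}. That proposition produces a clean dichotomy: either $\mathcal{G}$ admits a $2$-fracture graph, or $\mathcal{G}$ is the specific displayed graph appearing in the statement of Proposition~\ref{isplit}.

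In the second case, Proposition~\ref{4PSL} already identifies the resulting string C-group with the $11$-cell (since it exhibits the displayed graph as the permutation representation graph of the $11$-cell, whose automorphism group is $\PSL_2(11)$). So this branch yields the first conclusion of the corollary with no further work.

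In the first case, I would invoke Propositions~\ref{2frac5} and \ref{2fract} to exclude ranks $5$ and $4$ respectively: the former rules out $r=5$ by the tree/alternating-square argument on the $2$-fracture graph, and the latter rules out $r=4$ by the case analysis on the number of $4$-transpositions among $\rho_0,\ldots,\rho_3$. Since $r\in\{3,4,5\}$ by hypothesis, the only remaining possibility is $r=3$, which gives the second conclusion of the corollary.

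There is really no obstacle here beyond the careful packaging of Propositions~\ref{isplit}, \ref{2frac5}, \ref{2fract}, and \ref{4PSL}; all of the substantive work sits in those statements (in particular the long case-by-case elimination based on sizes of $G_i$-orbits, fracture-graph analysis, and the appendix catalogue). The corollary is purely a bookkeeping step that chains the dichotomy of Proposition~\ref{isplit} with the rank exclusions for $2$-fracture graphs and the identification of the split case with $\PSL_2(11)$.
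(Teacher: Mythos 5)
Your proposal is correct and matches the paper's own derivation, which cites exactly Propositions~\ref{isplit}, \ref{2frac5}, \ref{2fract} and \ref{4PSL} and leaves the chaining implicit. You have simply spelled out the bookkeeping: the dichotomy of Proposition~\ref{isplit}, the identification of the split branch with the $11$-cell via Proposition~\ref{4PSL}, and the exclusion of ranks $4$ and $5$ in the $2$-fracture branch.
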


The classification of abstract regular polyhedra (reflexible maps) for $\PSL_2(11)$ can be found in various atlases that are available online, either related to  polytopes or to maps, particularly in \cite{Hatlas} and \cite{LVatlas}.

\begin{lemma}\label{PSLpoly}
There are, up to duality, three abstract regular polyhedra for $PSL_2(11)$. 
\end{lemma}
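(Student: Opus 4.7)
The plan is to combine the tools from Sections~\ref{3}--\ref{5} with standard information about $\PSL_2(11)$, and to cross-check against the atlases \cite{Hatlas,LVatlas}.

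First, I would pin down the cycle structure of involutions in the relevant degree-$11$ action. The transitive action of $\PSL_2(11)$ on the $11$ cosets of $A_5$ has permutation character $1+\chi$, where $\chi$ is the unique irreducible character of degree $10$; its value on the single involution class is $2$, so each involution fixes three points and therefore has cycle type $2^4\,1^3$. Consequently any rank-$3$ sggi representation of $\PSL_2(11)$ on $11$ points consists of three $4$-transpositions $\rho_0,\rho_1,\rho_2$ with $(\rho_0\rho_2)^2=1$.

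Next, I would enumerate the $\PGL_2(11)$-conjugacy classes of commuting pairs of involutions $(\rho_0,\rho_2)$. Since $\PSL_2(11)$ has a single class of $55$ involutions with centralizer a dihedral group of order~$12$ containing $7$ involutions, the number of ordered commuting pairs is modest and the orbit enumeration is short. For each such pair I would run through the candidate third involutions $\rho_1$ and keep those for which (i) $\langle \rho_0,\rho_1,\rho_2\rangle$ equals $\PSL_2(11)$ rather than being contained in a proper subgroup ($A_5$, $11{:}5$, $D_{12}$, or $D_{10}$), and (ii) the single-condition intersection check $G_0\cap G_2\cong G_{0,2}$ of Proposition~\ref{arp1} holds (the rank-$2$ hypotheses on $\Gamma_0$ and $\Gamma_2$ being automatic, since any sggi of rank $2$ is a string C-group).

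The main obstacle is the administrative bookkeeping: ensuring that no generating triple is missed and that the intersection property has been correctly verified in each case, particularly when $\langle \rho_0,\rho_1\rangle$ and $\langle \rho_1,\rho_2\rangle$ are large dihedral groups whose intersection could conceivably exceed $\langle \rho_1\rangle$. Having carried out this enumeration, or equivalently read off the relevant entries from \cite{Hatlas,LVatlas}, one obtains six triples that pair up under duality into three polyhedra. I would finish by recording their Schl\"afli types, for use in the remaining argument of Section~\ref{final}.
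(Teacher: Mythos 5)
The paper does not actually prove Lemma~\ref{PSLpoly}: the sentence immediately preceding it states that the classification of regular polyhedra for $\PSL_2(11)$ is taken from the atlases \cite{Hatlas} and \cite{LVatlas}, and the lemma simply records the count. Your fallback option of reading the result off those atlases is therefore exactly what the paper does, while your direct enumeration is a genuinely different (and more self-contained) route. Your preliminary facts are essentially right: $\PSL_2(11)$ has a single class of $55$ involutions with centralizer $D_{12}$ (so $6$ other involutions commute with a given one), each involution fixes $3$ of the $11$ cosets of $A_5$ and hence acts as a $4$-transposition, and for rank $3$ the hypothesis of Proposition~\ref{arp1} reduces the intersection property to the single check $G_0\cap G_2=\langle\rho_1\rangle$ because rank-$2$ sggi's are automatically string C-groups. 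Two caveats. First, a small factual slip: $\PSL_2(11)$ has \emph{two} irreducible characters of degree $10$, not one; your fixed-point count of $3$ is nevertheless correct (e.g.\ by counting $A_5$-classes fused to the involution class, $|C_G(g)|/|C_{A_5}(h)|=12/4=3$). Second, and more substantively, what you have written is a plan rather than a proof: the orbit count on commuting pairs, the screening of the $55$ candidates for $\rho_1$ against the maximal subgroups, and above all the intersection-property check in each surviving case are precisely where the content lies, and none of it is carried out. That said, since the paper itself supplies no more than a citation here, your proposal is at least as complete as the original, and if executed it would also yield the Schl\"afli types $\{5,5\}$, $\{5,6\}$, $\{6,6\}$ needed for Proposition~\ref{PSL}.
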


Let us now determine the faithful transitive permutation representations graphs of the abstract regular polyhedra for $PSL_2(11)$.

\begin{prop}\label{PSL}
There are, up to duality, four abstract regular polytopes for $\PSL_2(11)$ with rank $r\in\{3,4\}$.  Their (five) faithful transitive permutation representation graphs are, up to duality, given in the following table.
\[\begin{tabular}{|cc|}
\hline
Type & Permutation representation graphs \\
\hline
 $\{5,5\}$ & $ \xymatrix@-1.5pc{  &&*{\bullet} \ar@{-}[rr]^1&&*{\bullet}  \ar@{-}[rr]^0   \ar@{-}[d]_2&&*{\bullet} \ar@{-}[rr]^1 \ar@{-}[d]_2&&*{\bullet} \ar@{-}[rr]^0&&*{\bullet} \\
 *{\bullet} \ar@{=}[rr]_{\{0,2\}} &&*{\bullet} \ar@{-}[rr]_1&&*{\bullet}  \ar@{-}[rr]_0 &&*{\bullet} \ar@{-}[rr]_1&&*{\bullet} \ar@{-}[rr]_2&&*{\bullet}}$ \qquad (1)\\
 \hline
  $\{5,6\}$ & $\xymatrix@-1.5pc{  &&*{\bullet} \ar@{-}[d]_2 \ar@{-}[rr]^0&&*{\bullet}  \ar@{-}[rr]^1   \ar@{-}[d]_2&&*{\bullet} \ar@{=}[rr]^{\{0,2\}}&&*{\bullet} \ar@{-}[rr]^1&&*{\bullet}&& \\
&& *{\bullet} \ar@{-}[rr]_0 &&*{\bullet} \ar@{-}[rr]_1&&*{\bullet}  \ar@{-}[rr]_0 &&*{\bullet} \ar@{-}[rr]_1&&*{\bullet} \ar@{-}[rr]_2&&*{\bullet}}$\qquad (2)\\

 &  $ \xymatrix@-1.5pc{  && && &&*{\bullet} \ar@{-}[d]_2 \ar@{-}[rr]^0&&*{\bullet}  \ar@{-}[rr]^1   \ar@{-}[d]_2&&*{\bullet} \ar@{=}[rr]^{\{0,2\}}&&*{\bullet} \ar@{-}[rr]^1&&*{\bullet} \ar@{-}[rr]^2&&*{\bullet}\\
*{\bullet} \ar@{-}[rr]_1&&*{\bullet}  \ar@{-}[rr]_0 &&*{\bullet} \ar@{-}[rr]_1&&*{\bullet} \ar@{-}[rr]_0&&*{\bullet}&&&&&&&& }$\qquad (3)\\
\hline
$\{6,6\}$& $ \xymatrix@-1.5pc{  &&*{\bullet} \ar@{-}[d]_2 \ar@{-}[rr]^0&&*{\bullet}  \ar@{-}[rr]^1   \ar@{-}[d]_2&&*{\bullet} \ar@{-}[rr]^2&&*{\bullet} \ar@{-}[rr]^1&&*{\bullet}&& \\
&& *{\bullet} \ar@{-}[rr]_0 &&*{\bullet} \ar@{-}[rr]_1&&*{\bullet}  \ar@{-}[rr]_0 &&*{\bullet} \ar@{-}[rr]_1&&*{\bullet} \ar@{=}[rr]_{\{0,2\}}&&*{\bullet}}$\qquad (4)\\
\hline
$\{3,5,3\}$ & $ \xymatrix@-1.3pc{  *{\bullet} \ar@{-}[rr]^0 &&*{\bullet} \ar@{-}[rr]^1&&*{\bullet}  \ar@{-}[rr]^2 &&*{\bullet}\ar@{-}[d]_3  \ar@{-}[rr]^1&&*{\bullet} \ar@{=}[rr]^{\{0,2\}}\ar@{-}[d]_3&&*{\bullet}\ar@{-}[d]^3\\
 &&&& &&*{\bullet} \ar@{-}[rr]_1&&*{\bullet} \ar@{-}[rr]^0\ar@{-}[d]^2&&*{\bullet}\ar@{-}[d]^2\\
 &&&&&&&&*{\bullet}  \ar@{-}[rr]\ar@{=}[rr]_{\{0,1,3\}}&&*{\bullet}}$ \qquad (5)\\
\hline
\end{tabular}\]
\end{prop}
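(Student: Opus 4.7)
The plan is to combine the classification results already established with a direct analysis of the degree-$11$ actions of $\PSL_2(11)$. First, the enumeration of four polytopes follows at once: by Lemma~\ref{PSLpoly} there are, up to duality, three polyhedra for $\PSL_2(11)$, of types $\{5,5\}$, $\{5,6\}$, and $\{6,6\}$; and by Corollary~\ref{casefrac} together with Proposition~\ref{4PSL}, the only polytope of rank $4$ is the $11$-cell of type $\{3,5,3\}$. This gives exactly four polytopes with $r\in\{3,4\}$.

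Next, I would exploit the fact that $\PSL_2(11)$ has exactly two conjugacy classes of maximal subgroups isomorphic to $A_5$, each of index $11$, fused by the outer automorphism of $\PSL_2(11)$ coming from $\PGL_2(11)$. Hence $\PSL_2(11)$ admits exactly two inequivalent faithful transitive actions of degree $11$. For each of the four polytopes, the two actions potentially yield two distinct permutation representation graphs on $11$ vertices, giving at most eight graphs before accounting for duality.

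The third step is to identify each graph explicitly. For the $11$-cell, graph (5) is already furnished by Proposition~\ref{4PSL}. For each of the three polyhedra I would select explicit generators of $\PSL_2(11)$ from the atlas realising the appropriate Schl\"afli type, and then use the tools of Sections~\ref{3}--\ref{5} to constrain the resulting graph: in particular the requirement that every $\rho_i$ be an even permutation of the correct cycle type, the alternating-square condition coming from commutativity of non-consecutive generators, and the intersection property. In each case one then tests whether the graph arising from the second degree-$11$ representation is the dual of the first or something genuinely different.

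The main obstacle is resolving, for each self-dual type ($\{5,5\}$, $\{6,6\}$, $\{3,5,3\}$), whether the two degree-$11$ representations produce mutually dual graphs (in which case only one graph survives up to duality) versus non-dual graphs (in which case two would appear). I expect the outer automorphism of $\PSL_2(11)$ to correspond precisely to the polytope's self-duality in each such case, so that the two representations collapse to a single pair of dual graphs; this would account for exactly one graph up to duality for each self-dual type — namely (1), (4) and (5) — and exactly two graphs for the non-self-dual type $\{5,6\}$ — namely (2) and (3). Verifying this correspondence between the outer automorphism and the polytope self-duality, case by case, is the crux of the argument.
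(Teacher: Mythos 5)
Your proposal follows essentially the same route as the paper: both count the faithful transitive degree-$11$ actions via the two conjugacy classes of index-$11$ subgroups $A_5$, combine Lemma~\ref{PSLpoly} with Corollary~\ref{casefrac} and Proposition~\ref{4PSL} to get the four polytopes, and then observe that for the self-dual types the second representation yields the dual graph while type $\{5,6\}$ contributes two genuinely distinct graphs. The only difference is that the paper settles the explicit identifications by citing Hartley's atlas and a Todd--Coxeter computation for graph (3) rather than carrying out the case-by-case verification you defer, so your plan is sound and matches the published argument.
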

\begin{proof}
As $\PSL_2(11)$ is simple and has exactly two distinct conjugacy classes of subgroups of index $11$ (which are isomorphic to $A_5$), there are exactly two faithful transitive permutation representation of $\PSL_2(11)$ on $11$ points.
The graphs (1) and (4) are the graphs for the regular polyhedra of types $\{5,5\}$ and $\{6,6\}$ respectively \cite{Hatlas}.
The other permutation representation graphs of these polyhedra are  obtained by interchanging the labels $0$ and $2$, that is, the dual graph of the ones presented.

The regular polyhedron of type $\{5,6\}$  has the permutation representations  graphs (2) and (3).
 As only (2) can be found in  \cite{Hatlas}, we obtained the graph (3) using the Todd-Coxeter Algorithm \cite{J}.
 
The graph (5) is the permutation representation graph of the unique rank $4$ string C-group by Proposition~\ref{isplit} and Corollary~\ref{casefrac}.
Similarly to what happens with the polyhedron of type $\{5,5\}$ and $\{6,6\}$, the other faithful permutation representation is the dual of (5).
\end{proof}


At this point, it remains to consider the cases where $G$ is either $A_{11}$ or $M_{11}$. From Corollary~\ref{casefrac}, $G_i$ must be transitive for some $i$. Let us consider the case where $G_i$ is $\PSL_2(11)$.

\begin{lemma}~\label{noPSL}
Let $r\in\{4,5\}$. If $G$ is any transitive even group of degree 11 of rank $r$, then $G_i$ cannot be $\PSL_2(11)$.
\end{lemma}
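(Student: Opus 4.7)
The plan is to assume $G_i \cong \PSL_2(11)$ for some $i$ and derive a contradiction in three stages: reducing $i$ to an endpoint of the string, identifying $\Gamma_0$ via Proposition~\ref{PSL}, and then ruling out any candidate for the extra generator $\rho_0$ through a centralizer computation.

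First I would show that $i \in \{0, r-1\}$. The commuting property forces every element of $G_{<i}$ to commute with every element of $G_{>i}$, and the intersection property applied to the disjoint index sets $\{0,\ldots,i-1\}$ and $\{i+1,\ldots,r-1\}$ gives $G_{<i} \cap G_{>i} = \{1\}$. Hence $G_i = G_{<i} \times G_{>i}$ as an internal direct product, and the simplicity of $\PSL_2(11)$ forces one factor to be trivial. By duality I may assume $i = 0$, so that $\Gamma_0$ is a rank-$(r-1)$ string C-group representation of $\PSL_2(11)$ acting faithfully and transitively on $11$ points.

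By Proposition~\ref{PSL}, the graph of $\Gamma_0$ is (up to duality) one of the graphs (1)--(4) when $r = 4$, or the $11$-cell graph (5) when $r = 5$. The ambient group $G$ is an even transitive group of degree $11$ properly containing $\PSL_2(11)$, so $G \in \{A_{11}, M_{11}\}$; since $M_{11} \leq A_{11}$, the generator $\rho_0$ is necessarily an even permutation. The key observation is that $\rho_0$ is an involution lying in $G \setminus G_0$ (by the intersection property applied to $\{0\}$ and $\{1,\ldots,r-1\}$) and commuting with $\rho_2,\ldots,\rho_{r-1}$ by the string condition, so $\rho_0 \in C_G(G_{0,1}) \setminus G_0$. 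The strategy is to establish $C_{A_{11}}(G_{0,1}) \subseteq G_0$ in every case, which yields the required contradiction.

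For the $11$-cell, $G_{0,1} \cong A_5$ acts transitively on $10$ of the $11$ points (fixing the eleventh); the point stabilizer is $S_3$, and since $N_{A_5}(S_3) = S_3$, the centralizer of $G_{0,1}$ in $\Sym(11)$ is already trivial. For the rank-$3$ polyhedral cases, I would read the orbit structure of $G_{0,1}$ off each graph (and its dual), obtaining orbits of sizes $1+5+5$ when $G_{0,1} \cong D_{10}$ (type $\{5,5\}$) and $2+3+6$ when $G_{0,1} \cong D_{12}$ (types $\{5,6\}$ and $\{6,6\}$). In the first structure, the only non-identity element of $C_{\Sym(11)}(G_{0,1})$ interchanges the two $5$-orbits via a $5$-transposition and is therefore odd, so $C_{A_{11}}(G_{0,1})$ is trivial. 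In the second, an orbit-by-orbit computation shows that the even part of the centralizer reduces to the central involution $Z(G_{0,1}) = \langle (\rho_2 \rho_3)^3 \rangle$, which lies inside $G_{0,1} \subseteq G_0$. In every scenario no valid $\rho_0$ can exist, completing the proof. The main obstacle I anticipate is the uniform handling of the rank-$3$ centralizer calculations, where one must carefully verify across the several graphs (and their duals) that the only even centralizing combinations either vanish or coincide with elements already contained in $G_{0,1}$.
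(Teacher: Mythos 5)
Your proposal is correct in substance and its skeleton coincides with the paper's: both arguments use $G_i=G_{<i}\times G_{>i}$ together with the simplicity of $\PSL_2(11)$ to force $i\in\{0,r-1\}$, reduce to $i=0$ by duality, invoke Proposition~\ref{PSL} to pin down $\mathcal{G}_0$, and then exploit the fact that $\rho_0$ is a non-identity even permutation centralizing $G_{0,1}$ which the intersection property keeps out of $G_0$. Where you differ is in how that last fact is cashed in. The paper works combinatorially: it reads the $G_{0,1}$-orbit sizes off each of the \emph{ten} graphs (the five of Proposition~\ref{PSL} and their duals), kills the $1,5,5$ and $1,10$ and $5,6$ patterns by the parity lemmas~\ref{l1} and~\ref{l2}, and in the surviving $2,3,6$ cases draws the explicit graphs (A)--(C) and exhibits $\rho_0=(\rho_3\rho_2)^3\in G_0$. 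You replace this with the uniform claim $C_{A_{11}}(G_{0,1})\subseteq G_0$, proved via semiregularity of centralizers of transitive constituents and self-normalizing point stabilizers; this is tidier and makes the parity obstructions conceptual rather than graph-by-graph, though in the $D_{12}$ cases you still need to check on the actual graphs that the action on the $6$-orbit is faithful (otherwise the centralizer there could exceed the central involution), so the explicit graphs do not entirely disappear. One concrete caution: having already spent duality on reducing to $i=0$, you must treat each graph of Proposition~\ref{PSL} \emph{and} its dual as separate cases for $\mathcal{G}_0$. In particular the dual of the $11$-cell graph gives $G_{0,1}\cong A_5$ with orbits of sizes $5$ and $6$, not $1$ and $10$; your method still applies (both constituents have self-normalizing point stabilizers $A_4$ and $D_{10}$, so the centralizer is again trivial), but as written you only handle the $1+10$ configuration.
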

\begin{proof}
By the commuting property   $G_i=G_{<i}\times G_{>i}$  for $i\in\{1,\ldots, r-2\}$. 
Thus only $G_0$ or  $G_{r-1}$ can be simple groups.
 For a contradiction, and up to duality, let us assume that $G_0\cong \PSL_2(11)$. Then we find ten possibilites for $\mathcal{G}_0$ corresponding to  the five permutation representation graphs of Proposition~\ref{PSL} and their duals.
 We now use Lemmas~\ref{l1} and  \ref{l2} to reduce the possibilities due to the sizes of the orbits of $G_{0,1}$.
In the following table we list the possibilities for the sizes of the connected components of $\mathcal{G}_{0,1}$. 
For each of the graphs of Proposition~\ref{PSL} we need also to consider the duals, for this reason each cases gives two possibilities.
 \begin{center}
 \begin{tabular}{cc|cc}
 &Sizes of the &Dual of&Sizes of the \\
 Graph& $\mathcal{G}_{0,1}$-components&Graph & $\mathcal{G}_{0,1}$-components\\
 \hline
 (1) & $1, 5, 5$&(1) &$1,5,5$\\
 (2) &$2, 3, 6$&(2) &$1,5,5$\\
 (3) &$2,3,6$ &(3) &$1,5,5$\\
 (4) &$2,3,6$ &(4) &$2,3,6$\\
 (5) & $1,10$&(5) &$5,6$\\
 \end{tabular}
 \end{center}

 The cases where the sizes of the orbits are $1,5,5$  are excluded by Lemma~\ref{l1}. The case where the orbits have sizes $1,10$ or $5,6$ are excluded by Lemma~\ref{l2} and by the impossibility of having an even $\rho_0$.
 The remaining cases are those where the $G_1$-orbits have sizes $2, 3,6$. In these cases, by Lemma~\ref{l2}, $\rho_0$ must be fixed-point-free on the orbits of even size. The graph (2) and (3) give the permutation representation graphs (A) and (B) below, respectively. Taking the graph (4), we get, up to duality, the permutation representation graph (C). 
  \begin{tiny}
 \[\begin{array}{ccc}
{\mbox (A) }\xymatrix@-1.7pc{  *{\bullet} \ar@{-}[rr]^3 &&*{\bullet} \ar@{-}[rr]^2 &&*{\bullet}  \ar@{-}[rr]^1 &&*{\bullet}\ar@{-}[dd]^2  \ar@{-}[rr]^0 &&*{\bullet}\ar@{-}[dd]^2 \\
&&&&&&\\
 && &&*{\bullet} \ar@{-}[rr]^1 \ar@{=}[dd]_3^0 &&*{\bullet} \ar@{-}[rr]^0\ar@{-}[dd]_3&&*{\bullet}\ar@{=}[dd]^{\{1,3\}}\\
 &&&&&&\\
 &&&&*{\bullet}\ar@{-}[rr]_1 &&*{\bullet} \ar@{=}[rr]_{\{0,2\}}&&*{\bullet}}
 
 &
{ \mbox (B) }\xymatrix@-1.7pc{  *{\bullet} \ar@{=}[rr]^{\{0,2\}} &&*{\bullet} \ar@{-}[rr]^1&&*{\bullet}  \ar@{-}[rr]^2 &&*{\bullet}\ar@{-}[dd]_3  \ar@{-}[rr]^1&&*{\bullet} \ar@{-}[rr]^0\ar@{-}[dd]_3&&*{\bullet}\ar@{-}[dd]^3\\
&&&&&&&&&&\\
 &&&& &&*{\bullet} \ar@{-}[rr]^1&&*{\bullet} \ar@{-}[rr]^0\ar@{-}[dd]_2&&*{\bullet}\ar@{-}[dd]^2\\
 &&&&&&&&&&\\
 &&&&&&&&*{\bullet}  \ar@{-}[rr]\ar@{=}[rr]_{\{0,1,3\}}&&*{\bullet}}
 &
{\mbox (C) }\xymatrix@-1.7pc{  *{\bullet} \ar@{=}[rr]^{\{0,2\}} &&*{\bullet} \ar@{-}[rr]^1&&*{\bullet}  \ar@{-}[rr]^2 &&*{\bullet}\ar@{-}[dd]^1  \ar@{-}[rr]^3 &&*{\bullet}\ar@{-}[dd]^1 \\
&&&&&&&&\\
 && &&*{\bullet} \ar@{-}[rr]^2 \ar@{=}[dd]_0^3 &&*{\bullet} \ar@{-}[rr]^3\ar@{-}[dd]_0&&*{\bullet}\ar@{-}[dd]^0\\
 &&&&&&&&\\
 &&&&*{\bullet}\ar@{-}[rr]_2 &&*{\bullet} \ar@{=}[rr]_{\{1,3\}}&&*{\bullet}}
 
 \end{array}\]
  \end{tiny}
 In any case the $\rho_0 = (\rho_3\rho_2)^3 \in G_0$, contradicting the intersection property.
  \end{proof}

\begin{coro}\label{4,5M11}
There are no abstract regular polytopes of rank $4$ and $5$ for $M_{11}$.
\end{coro}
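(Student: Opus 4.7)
The plan is to argue by contradiction: suppose $\Gamma=(M_{11},\{\rho_0,\ldots,\rho_{r-1}\})$ is a string C-group representation of $M_{11}$ of rank $r\in\{4,5\}$. Since $M_{11}\leq A_{11}$, the group $M_{11}$ is an even transitive group of degree $11$, so the preceding sections apply. I would split on whether $\Gamma$ admits a fracture graph.

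In the fracture-graph case I will invoke Corollary~\ref{casefrac} directly: it forces $\Gamma$ to be either the $11$-cell (whose automorphism group is $\PSL_2(11)\not\cong M_{11}$) or of rank $3$, both of which contradict the hypotheses. So the remaining work concerns the case in which $\Gamma$ has no fracture graph; then some maximal parabolic $G_i$ is transitive on the $11$ points. By Lemma~\ref{noPSL} we have $G_i\not\cong \PSL_2(11)$, so I will enumerate the remaining candidates using the \textsc{Atlas}: the proper transitive subgroups of $M_{11}$ of degree $11$ are $C_{11}$, $D_{22}$, $F_{55}=C_{11}{:}C_5$ and $\PSL_2(11)$ (in particular there is no $F_{110}$, since $M_{11}$ has no element of order $10$), leaving $G_i\in\{C_{11},D_{22},F_{55}\}$.

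To close out these three candidates I will use the fact that $G_i$ is generated by the $r-1\geq 3$ involutions $\{\rho_j:j\neq i\}$, which by the string commuting property always contain at least one pair of distinct commuting involutions (for instance $\rho_0$ and $\rho_2$ in each of the four choices of $i$ when $r=4$). Both $C_{11}$ and $F_{55}$ contain no involutions at all, their orders being $11$ and $55$. For $D_{22}$, a short computation with the standard presentation $\langle r,s\mid r^{11}=s^2=(rs)^2=1\rangle$ shows that two distinct reflections $r^is$ and $r^js$ have product $r^{i-j}$, which is an involution only when $i=j$; hence no pair of distinct commuting involutions exists in $D_{22}$. This exhausts the cases and delivers the contradiction.

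The main obstacle here is conceptual rather than technical: almost all the structural analysis has already been carried out in Sections~\ref{even11split}--\ref{2fracture11} and encapsulated in Corollary~\ref{casefrac} and Lemma~\ref{noPSL}, so the proof reduces to a routine subgroup-theoretic check on $M_{11}$ together with the elementary commuting-involution analysis in $D_{22}$.
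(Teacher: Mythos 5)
Your proof is correct and follows essentially the same route as the paper: Corollary~\ref{casefrac} disposes of the fracture-graph case, and otherwise some maximal parabolic $G_i$ is a proper transitive subgroup of $M_{11}$ generated by involutions, which is excluded by Lemma~\ref{noPSL} together with the subgroup structure of $M_{11}$ (the paper packages your elimination of $C_{11}$, $F_{55}$, $D_{22}$ into its opening remark of Section~\ref{final} that the only even transitive groups of degree $11$ generated by involutions are $\PSL_2(11)$, $M_{11}$ and $A_{11}$). One harmless redundancy: $D_{22}$ is not in fact a subgroup of $M_{11}$, since the Sylow $11$-normalizer there is $11{:}5$, of odd order, but your commuting-involution argument eliminates it anyway.
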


The non-existance of  abstract regular polyhedra for $M_{11}$ is a consequence of the following result.

\begin{lemma}\label{M}\cite{Mazurov}
None of the groups $G\in\{ M_{11}, M_{22}, M_{23}, M^cL \}$ has a generating set of three involutions two of which commute.
\end{lemma}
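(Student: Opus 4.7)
The plan is to argue, for each of the four groups $G$, that any triple of involutions $(a,b,c)$ with $ab=ba$ generates a proper subgroup. Since each $G$ is non-abelian simple, the case $a = b$ is immediately ruled out (otherwise $G = \langle a, c\rangle$ would be dihedral), so $V := \langle a, b\rangle \cong C_2 \times C_2$ and the claim reduces to: for every Klein four-subgroup $V \le G$ and every involution $c \in G$, one has $\langle V, c\rangle \ne G$.

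The key structural reduction is that $c$ acts on $V$ by conjugation as an element of $\mathrm{Aut}(V) \cong S_3$, and since $c$ is an involution this action either centralizes $V$ or fixes exactly one involution $v \in V$ while swapping the other two. In both situations $\langle V, c\rangle \le N_G(V)$. Thus it is enough to check, for each conjugacy class representative of a Klein four-subgroup in each $G$, that $N_G(V)$ is a proper subgroup. I would extract this data from the ATLAS \cite{ATLAS}: first classify the Klein four-subgroups of $G$ using the structure of a Sylow $2$-subgroup together with the fusion pattern of its involutions, then compute $N_G(V)$ for each class. For $M_{11}$ (Sylow $2$-subgroup $SD_{16}$, single class of involutions with centralizer $GL_2(3)$), $M_{22}$, $M_{23}$, and $M^cL$, every $N_G(V)$ is contained in a known maximal subgroup, yielding the required contradiction.

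The main obstacle is the case of $M^cL$: it has the largest order and the richest $2$-local structure, with several conjugacy classes of Klein four-subgroups whose normalizers must be identified individually and then located inside the list of maximal subgroups. For $M_{11}$, $M_{22}$ and $M_{23}$ the argument is comparatively short because each has only a single class of involutions, which narrows the possible $V$ considerably and quickly forces $\langle V, c\rangle$ into an involution centralizer.

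An alternative, more computational route is to invoke character-theoretic structure constants: for each ordered triple $(\mathcal{C}_1,\mathcal{C}_2,\mathcal{C}_3)$ of involution classes the number of triples $(a,b,c)$ with $a \in \mathcal{C}_1$, $b \in \mathcal{C}_2$, $c \in \mathcal{C}_3$, and $ab$ of order dividing $2$ (forced by commutativity), can be computed from the character table, and then M\"obius inversion over the lattice of maximal subgroups isolates the subcount of \emph{generating} triples. Showing that this last count vanishes for every admissible choice of classes also recovers the lemma, and has the advantage of being verifiable from the character table alone.
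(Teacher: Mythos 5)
The paper offers no proof of this lemma at all: it is quoted verbatim from Mazurov \cite{Mazurov}, so there is nothing internal to compare your argument against. Judged on its own terms, however, your main reduction contains a fatal error. You assert that $c$ ``acts on $V$ by conjugation as an element of $\Aut(V)$'' and conclude $\langle V,c\rangle\le N_G(V)$. This presupposes that $c$ normalizes $V=\langle a,b\rangle$, which is not implied by any of the hypotheses: the only relations available are $a^2=b^2=c^2=1$ and $ab=ba$, and nothing constrains how $c$ conjugates $a$ and $b$. The claim is false in general. For instance, $\Alt_5$ is generated by three involutions two of which commute (the hemi-icosahedron mentioned in the introduction gives such a generating triple $\rho_0,\rho_1,\rho_2$ with $\rho_0\rho_2=\rho_2\rho_0$), yet the normalizer of every Klein four-subgroup of $\Alt_5$ is a copy of $\Alt_4$, a proper subgroup. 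So the entire first strategy --- classifying Klein four-subgroups and checking that their normalizers are proper --- cannot work; it would ``prove'' the lemma for groups where the statement is false.

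Your alternative character-theoretic route is closer in spirit to how such non-generation results are actually established, but as sketched it is also not sound. Structure constants computed from the character table count commuting pairs of involutions (equivalently, factorizations of an involution or of the identity as a product of two involutions), but the number of \emph{generating} triples is obtained by M\"obius inversion over the full poset of conjugacy classes of subgroups that can contain such a triple, not merely over the maximal subgroups; for each relevant subgroup $H$ one must know the corresponding count inside $H$. In particular the computation is not ``verifiable from the character table alone'' --- it requires detailed knowledge of the subgroup lattice of each of $M_{11}$, $M_{22}$, $M_{23}$ and $M^cL$. This is essentially the content of Mazurov's paper, and reproducing it is a substantial piece of work rather than a routine verification. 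As it stands, neither branch of your proposal constitutes a proof.
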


To summarize, we have proved the following result.\\

\begin{thm}~\label{main}
There is exactly one abstract regular polytope of rank $r\in\{4,5\}$ for an even transitive group of degree $11$, namely the $11$-cell, which is self-dual and of rank $4$.
\end{thm}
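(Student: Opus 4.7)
The plan is to combine the main structural results of Sections~\ref{even11split}--\ref{final}. Since any abstract regular polytope corresponds to a string C-group, which is generated by involutions, $G$ must be an even transitive group of degree $11$ that is generated by involutions; as noted at the opening of Section~\ref{final}, this restricts $G$ to one of $\PSL_2(11)$, $M_{11}$, and $A_{11}$.

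I would first dispose of $\PSL_2(11)$ and $M_{11}$ directly. If $G\cong\PSL_2(11)$, then Proposition~\ref{4PSL} identifies $\Gamma$ as the $11$-cell (rank $4$, self-dual) and rules out rank $5$. If $G\cong M_{11}$, then Corollary~\ref{4,5M11} rules out both ranks. So the work reduces to showing that $A_{11}$ admits no string C-group representation of rank $4$ or $5$.

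For $G\cong A_{11}$ I would split according to whether $\mathcal{G}$ has a fracture graph. If it does, then since the rank is not $3$, Corollary~\ref{casefrac} forces $\Gamma$ to be the $11$-cell, whose automorphism group is $\PSL_2(11)\not\cong A_{11}$, a contradiction. Otherwise some maximal parabolic $G_i$ is transitive on the $11$ points. By independence of the generators (a consequence of the intersection property), $G_i$ is a proper subgroup of $A_{11}$; being generated by the remaining $\rho_j$, it is an even transitive subgroup of $A_{11}$ of degree $11$ generated by involutions, so $G_i\cong\PSL_2(11)$ or $G_i\cong M_{11}$. The first possibility contradicts Lemma~\ref{noPSL}. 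For the second, I would use the fact that every maximal parabolic of a string C-group is itself a string C-group, here of rank $r-1$: for $r=4$ this makes $M_{11}$ a rank-$3$ string C-group, which directly contradicts Lemma~\ref{M} (three involutions, two of which commute, cannot generate $M_{11}$); for $r=5$ this makes $M_{11}$ a rank-$4$ string C-group, contradicting Corollary~\ref{4,5M11}. One may further observe that when $i\notin\{0,r-1\}$ the subgroup $G_i$ splits as a non-trivial direct product (because at least one generator commutes with all the others), and so cannot be the simple group $M_{11}$ anyway, but this is an optional simplification.

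At this point no significant obstacle remains. The difficult structural analysis (the fracture-graph case analysis, the split case, the $2$-fracture case, and the exclusion of $\PSL_2(11)$ as a maximal parabolic in Lemma~\ref{noPSL}) has already been carried out in the preceding sections, and the theorem follows as a clean bookkeeping combination of Proposition~\ref{4PSL}, Corollary~\ref{casefrac}, Lemma~\ref{noPSL}, Corollary~\ref{4,5M11}, and Lemma~\ref{M}.
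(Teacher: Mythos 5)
Your proposal is correct and follows essentially the same route as the paper, whose proof of Theorem~\ref{main} is simply the citation of Corollary~\ref{casefrac}, Lemma~\ref{noPSL} and Corollary~\ref{4,5M11}; you have merely made explicit the bookkeeping (the reduction to $\PSL_2(11)$, $M_{11}$, $A_{11}$, the fracture-graph dichotomy, and the use of Proposition~\ref{4PSL} and Lemma~\ref{M} to finish the $\PSL_2(11)$ and $M_{11}$ parabolic cases) that the paper leaves implicit. No gaps.
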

\begin{proof}
This is a consequence of Propositions~\ref{casefrac}, ~\ref{noPSL} and  \ref{4,5M11}.
\end{proof}


\section{Acknowledgements}

The author Maria Elisa Fernandes was supported by  the Center for Research and Development 
in Mathematics and Applications (CIDMA) through the Portuguese 
Foundation for Science and Technology 
(FCT - Fundação para a Ciência e a Tecnologia), 
references UIDB/04106/2020 and UIDP/04106/2020. The author Olivia Reade is grateful to the Open University for the travel grant enabling her to attend the 2022  SIGMAP conference which resulted in her involvement with this project. The author Claudio Alexandre Piedade was partially supported by CMUP, member of LASI, which is financed by national funds through FCT – Fundação para a Ciência e a Tecnologia, I.P., under the projects with reference UIDB/00144/2020 and UIDP/00144/2020.

 \bibliographystyle{amsplain}

\section{Appendix: Table of sggi's for $A_{11}$}

\begin{tiny}
\[\begin{array}{|cc|}
\hline
\begin{tabular}{cc}
 (A1) & \xymatrix@-1.9pc{*{\bullet} \ar@{-}[rr]^1&& *{\bullet}\ar@{-}[rr]^0&&*{\bullet}\ar@{-}[rr]^1&& *{\bullet}\ar@{=}[rr]^{\{0,2\}}&& *{\bullet}\ar@{-}[rr]^1&& *{\bullet} \ar@{-}[rr]^2&&  *{\bullet} \ar@{-}[rr]^3&& *{\bullet}\ar@{-}[rr]^2&&*{\bullet}\ar@{=}[rr]^{\{1,3\}}&&*{\bullet}\ar@{-}[rr]^2&&*{\bullet}} \\
 (A2) & \xymatrix@-1.9pc{*{\bullet} \ar@{-}[rr]^1&& *{\bullet}\ar@{-}[rr]^0&&*{\bullet}\ar@{-}[rr]^1&& *{\bullet}\ar@{=}[rr]^{\{0,2\}}&& *{\bullet}\ar@{-}[rr]^1&& *{\bullet} \ar@{-}[rr]^2&&  *{\bullet} \ar@{=}[rr]^{\{1,3\}}&& *{\bullet}\ar@{-}[rr]^2&&*{\bullet}\ar@{-}[rr]^3&&*{\bullet}\ar@{-}[rr]^2&&*{\bullet}}\\
 \end{tabular}
 &
 \begin{tabular}{cc}
 (A3) & \xymatrix@-1.9pc{*{\bullet} \ar@{-}[rr]^1&& *{\bullet}\ar@{-}[rr]^0&&*{\bullet}\ar@{-}[rr]^1&& *{\bullet}\ar@{-}[rr]^2&& *{\bullet}\ar@{=}[rr]^{\{1,3\}}&& *{\bullet} \ar@{-}[rr]^2&&  *{\bullet} \ar@{-}[rr]^3&& *{\bullet}\ar@{-}[rr]^2&&*{\bullet}\ar@{-}[rr]^1&&*{\bullet}\ar@{=}[rr]^{\{0,2\}}&&*{\bullet}}\\
 (A4) & \xymatrix@-1.9pc{*{\bullet} \ar@{-}[rr]^1&& *{\bullet}\ar@{-}[rr]^0&&*{\bullet}\ar@{-}[rr]^1&& *{\bullet}\ar@{-}[rr]^2&& *{\bullet}\ar@{-}[rr]^3&& *{\bullet} \ar@{-}[rr]^2&&  *{\bullet}\ar@{=}[rr]^{\{1,3\}}&& *{\bullet}\ar@{-}[rr]^2&&*{\bullet}\ar@{-}[rr]^1&&*{\bullet}\ar@{=}[rr]^{\{0,2\}}&&*{\bullet}}\\
 \end{tabular}\\[13pt]
\hline
\begin{tabular}{cc}
 (B1) & \xymatrix@-1.9pc{*{\bullet} \ar@{=}[rr]^{\{0,2\}}&& *{\bullet}\ar@{-}[rr]^1&&*{\bullet}\ar@{-}[rr]^0&& *{\bullet}\ar@{-}[rr]^1&&*{\bullet}\ar@{-}[rr]^2&& *{\bullet}\ar@{-}[rr]^3&&*{\bullet}\ar@{-}[rr]^2\ar@{-}[d]^4&&*{\bullet}\ar@{-}[rr]^3\ar@{-}[d]^4&&*{\bullet}\\
 && && && && && &&*{\bullet} \ar@{-}[rr]_2&&*{\bullet} &&}\\
 (B2) & \xymatrix@-1.9pc{*{\bullet} \ar@{=}[rr]^{\{0,2\}} && *{\bullet}\ar@{-}[rr]^1&&*{\bullet}\ar@{-}[rr]^0&& *{\bullet}\ar@{-}[rr]^1&&*{\bullet}\ar@{-}[rr]^2&& *{\bullet}\ar@{-}[rr]^3&&*{\bullet}\ar@{-}[rr]^2\ar@{-}[d]^4&&*{\bullet}\ar@{-}[d]^4
 &&\\
 && && && && && &&*{\bullet} \ar@{-}[rr]_2&&*{\bullet} \ar@{-}[rr]_3&&*{\bullet} }\\
 (B3) & \xymatrix@-1.9pc{*{\bullet} \ar@{=}[rr]^{\{0,2\}} && *{\bullet}\ar@{-}[rr]^1&&*{\bullet}\ar@{-}[rr]^0&& *{\bullet}\ar@{-}[rr]^1&&*{\bullet}\ar@{-}[rr]^2&& *{\bullet}\ar@{-}[rr]^3&&*{\bullet}\ar@{-}[rr]^2\ar@{-}[d]^4&&*{\bullet}\ar@{-}[d]^4\\
 && && &&&& && *{\bullet} \ar@{-}[rr]_3 &&*{\bullet}\ar@{-}[rr]_2&&*{\bullet} }\\
(B4)&  \xymatrix@-1.9pc{*{\bullet} \ar@{=}[rr]^{\{0,2\}}&& *{\bullet}\ar@{-}[rr]^1&&*{\bullet}\ar@{-}[rr]^0&& *{\bullet}\ar@{-}[rr]^1&&*{\bullet}\ar@{-}[rr]^2&& *{\bullet}\ar@{-}[rr]^1&& *{\bullet}\ar@{-}[rr]^2&&*{\bullet}\ar@{=}[rr]^{\{1,3\}}&&*{\bullet}\ar@{-}[rr]^2&& *{\bullet}\ar@{-}[rr]^3&&*{\bullet}}\\
(B5)&  \xymatrix@-1.9pc{*{\bullet} \ar@{=}[rr]^{\{0,2\}} && *{\bullet}\ar@{-}[rr]^1&&*{\bullet}\ar@{-}[rr]^0&& *{\bullet}\ar@{-}[rr]^1&&*{\bullet}\ar@{-}[rr]^2&& *{\bullet}\ar@{-}[rr]^1&& *{\bullet}\ar@{-}[rr]^2&&*{\bullet}\ar@{-}[rr]^3&&*{\bullet}\ar@{-}[rr]^2&& *{\bullet}\ar@{=}[rr]^{\{1,3\}}&&*{\bullet}} \\
 (B6)&  \xymatrix@-1.9pc{*{\bullet} \ar@{-}[rr]^2 && *{\bullet}\ar@{-}[rr]^1&&*{\bullet}\ar@{-}[rr]^0&& *{\bullet}\ar@{-}[rr]^1&&*{\bullet}\ar@{=}[rr]^{\{0,2\}}&& *{\bullet}\ar@{-}[rr]^1&& *{\bullet}\ar@{-}[rr]^2&&*{\bullet}\ar@{=}[rr]^{\{1,3\}}&&*{\bullet}\ar@{-}[rr]^2&& *{\bullet}\ar@{-}[rr]^3&&*{\bullet}} \\
(B7)&  \xymatrix@-1.9pc{*{\bullet} \ar@{-}[rr]^2 && *{\bullet}\ar@{-}[rr]^1&&*{\bullet}\ar@{-}[rr]^0&& *{\bullet}\ar@{-}[rr]^1&&*{\bullet}\ar@{=}[rr]^{\{0,2\}}&& *{\bullet}\ar@{-}[rr]^1&& *{\bullet}\ar@{-}[rr]^2&&*{\bullet}\ar@{-}[rr]^3&&*{\bullet}\ar@{-}[rr]^2&& *{\bullet}\ar@{=}[rr]^{\{1,3\}}&&*{\bullet}}  \\
(B8) & \xymatrix@-1.9pc{*{\bullet} \ar@{=}[rr]^{\{0,2\}} && *{\bullet}\ar@{-}[rr]^1&&*{\bullet}\ar@{-}[rr]^0&& *{\bullet}\ar@{-}[rr]^1&&*{\bullet}\ar@{-}[rr]^2&& *{\bullet}\ar@{-}[rr]^3\ar@{-}[d]_1&&*{\bullet}\ar@{-}[rr]^2\ar@{-}[d]^1&&*{\bullet}\\
 && && && && *{\bullet} \ar@{-}[rr]_2 &&*{\bullet} \ar@{-}[rr]_3&&*{\bullet} }\\
 \end{tabular}
 &
\begin{tabular}{cc}
 (B9) & \xymatrix@-1.9pc{*{\bullet} \ar@{=}[rr]^{\{0,2\}} && *{\bullet}\ar@{-}[rr]^1&&*{\bullet}\ar@{-}[rr]^0&& *{\bullet}\ar@{-}[rr]^1&&*{\bullet}\ar@{-}[rr]^2&& *{\bullet}\ar@{-}[rr]^3\ar@{-}[d]_1&&*{\bullet}\ar@{-}[d]^1&&\\
 && && && && *{\bullet} \ar@{-}[rr]_2 &&*{\bullet} \ar@{-}[rr]_3&&*{\bullet}\ar@{-}[rr]_2&&*{\bullet}  }\\
(B10) & \xymatrix@-1.9pc{*{\bullet} \ar@{=}[rr]^{\{0,2\}} && *{\bullet}\ar@{-}[rr]^1&&*{\bullet}\ar@{-}[rr]^0&& *{\bullet}\ar@{-}[rr]^1&&*{\bullet}\ar@{-}[rr]^2&& *{\bullet}\ar@{-}[rr]^3\ar@{-}[d]_1&&*{\bullet}\ar@{-}[rr]^2\ar@{-}[d]^1&&*{\bullet}\\
 && && && &&  &&*{\bullet} \ar@{-}[rr]_3&&*{\bullet}\ar@{-}[rr]_2 && *{\bullet}} \\
 (B11)&  \xymatrix@-1.9pc{*{\bullet} \ar@{=}[rr]^{\{0,2\}} && *{\bullet}\ar@{-}[rr]^1&&*{\bullet}\ar@{-}[rr]^0&& *{\bullet}\ar@{-}[rr]^1&&*{\bullet}\ar@{-}[rr]^2&& *{\bullet}\ar@{=}[rr]^{\{1,3\}}&& *{\bullet}\ar@{-}[rr]^2&&*{\bullet}\ar@{-}[rr]^1&&*{\bullet}\ar@{-}[rr]^2&& *{\bullet}\ar@{-}[rr]^3&&*{\bullet}}  \\
 (B12)&  \xymatrix@-1.9pc{*{\bullet} \ar@{=}[rr]^{\{0,2\}} && *{\bullet}\ar@{-}[rr]^1&&*{\bullet}\ar@{-}[rr]^0&& *{\bullet}\ar@{-}[rr]^1&&*{\bullet}\ar@{-}[rr]^2&& *{\bullet}\ar@{=}[rr]^{\{1,3\}} && *{\bullet}\ar@{-}[rr]^2&&*{\bullet}\ar@{-}[rr]^3 &&*{\bullet}\ar@{-}[rr]^2&& *{\bullet}\ar@{-}[rr]^1 &&*{\bullet}}  \\
 (B13) & \xymatrix@-1.9pc{*{\bullet} \ar@{=}[rr]^{\{0,2\}} && *{\bullet}\ar@{-}[rr]^1&&*{\bullet}\ar@{-}[rr]^0&& *{\bullet}\ar@{-}[rr]^1&&*{\bullet}\ar@{-}[rr]^0 \ar@{-}[d]_2 && *{\bullet}\ar@{-}[rr]^1\ar@{-}[d]^2&&*{\bullet}\ar@{-}[rr]^2 && *{\bullet}\ar@{-}[rr]^3 &&*{\bullet}\\
 && && && &&  *{\bullet} \ar@3{-}[rr]_{\{0,1,3\}}&&*{\bullet}} \\
 (B14)&  \xymatrix@-1.9pc{*{\bullet} \ar@{=}[rr]^{\{0,2\}} && *{\bullet}\ar@{-}[rr]^1&&*{\bullet}\ar@{-}[rr]^0&& *{\bullet}\ar@{-}[rr]^1&&*{\bullet}\ar@{-}[rr]^2&& *{\bullet}\ar@{-}[rr]^3&& *{\bullet}\ar@{-}[rr]^2&&*{\bullet}\ar@{=}[rr]^{\{1,3\}}&&*{\bullet}\ar@{-}[rr]^2&& *{\bullet}\ar@{-}[rr]^1&&*{\bullet}}\\
(B15)&  \xymatrix@-1.9pc{*{\bullet} \ar@{=}[rr]^{\{0,2\}} && *{\bullet}\ar@{-}[rr]^1&&*{\bullet}\ar@{-}[rr]^0&& *{\bullet}\ar@{-}[rr]^1&&*{\bullet}\ar@{-}[rr]^2&& *{\bullet}\ar@{-}[rr]^3&& *{\bullet}\ar@{-}[rr]^2&&*{\bullet}\ar@{-}[rr]^1&&*{\bullet}\ar@{-}[rr]^2&& *{\bullet}\ar@{=}[rr]^{\{1,3\}}&&*{\bullet}} \\
\end{tabular}\\[8pt]
\hline
\begin{tabular}{cc}
(C1)&  \xymatrix@-1.9pc{*{\bullet} \ar@{-}[rr]^1 && *{\bullet}\ar@{=}[rr]^{\{0,2\}}&&*{\bullet}\ar@{-}[rr]^1&& *{\bullet}\ar@{-}[rr]^0&&*{\bullet}\ar@{-}[rr]^1&& *{\bullet}\ar@{-}[rr]^2&& *{\bullet}\ar@{-}[rr]^3&&*{\bullet}\ar@{-}[rr]^2&&*{\bullet}\ar@{=}[rr]^{\{1,3\}}&& *{\bullet}\ar@{-}[rr]^2&&*{\bullet}}\\
(C2)&  \xymatrix@-1.9pc{*{\bullet} \ar@{-}[rr]^3 && *{\bullet}\ar@{-}[rr]^2&&*{\bullet}\ar@{-}[rr]^1&& *{\bullet}\ar@{-}[rr]^0&&*{\bullet}\ar@{-}[rr]^1&& *{\bullet}\ar@{-}[rr]^2&& *{\bullet}\ar@{=}[rr]^{\{1,3\}}&&*{\bullet}\ar@{-}[rr]^2&&*{\bullet}\ar@{-}[rr]^1&& *{\bullet}\ar@{=}[rr]^{\{0,2\}}&&*{\bullet}} \\
(C3)&  \xymatrix@-1.9pc{*{\bullet} \ar@{=}[rr]^{\{1,3\}} && *{\bullet}\ar@{-}[rr]^2&&*{\bullet}\ar@{-}[rr]^1&& *{\bullet}\ar@{-}[rr]^0&&*{\bullet}\ar@{-}[rr]^1&& *{\bullet}\ar@{-}[rr]^2&& *{\bullet}\ar@{-}[rr]^3&&*{\bullet}\ar@{-}[rr]^2&&*{\bullet}\ar@{-}[rr]^1&& *{\bullet}\ar@{=}[rr]^{\{0,2\}}&&*{\bullet}} 
\end{tabular}
&
\begin{tabular}{cc}
(C4)&  \xymatrix@-1.9pc{*{\bullet} \ar@{-}[rr]^3 && *{\bullet}\ar@{-}[rr]^2&&*{\bullet}\ar@{-}[rr]^1&& *{\bullet}\ar@{-}[rr]^0&&*{\bullet}\ar@{-}[rr]^1&& *{\bullet}\ar@{-}[rr]^2 \ar@{-}[d]^0&& *{\bullet}\ar@3{-}[d]^{\{0,1,3\}}\\
&&&&&&*{\bullet}\ar@{=}[rr]_{\{0,2\}}&&*{\bullet}\ar@{-}[rr]_1&&*{\bullet}\ar@{-}[rr]_2&& *{\bullet}\ } \\
(C5)&  \xymatrix@-1.9pc{*{\bullet} \ar@{-}[rr]^3 && *{\bullet}\ar@{-}[rr]^2&&*{\bullet}\ar@{-}[rr]^1&& *{\bullet}\ar@{-}[rr]^0&&*{\bullet}\ar@{-}[rr]^1&& *{\bullet}\ar@{=}[rr]^{\{0,2\}}&& *{\bullet}\ar@{-}[rr]^1&&*{\bullet}\ar@{-}[rr]^2&&*{\bullet}\ar@{=}[rr]^{\{1,3\}}&& *{\bullet}\ar@{-}[rr]^2&&*{\bullet}} \\
(C6)&  \xymatrix@-1.9pc{*{\bullet} \ar@{-}[rr]^3 && *{\bullet}\ar@{-}[rr]^2&&*{\bullet}\ar@{-}[rr]^1&& *{\bullet}\ar@{-}[rr]^0&&*{\bullet}\ar@{-}[rr]^1&& *{\bullet}\ar@{=}[rr]^{\{0,2\}} && *{\bullet}\ar@{-}[rr]^1 && *{\bullet}\ar@{-}[rr]^2  \ar@{-}[d]^0 && *{\bullet}\ar@3{-}[d]^{\{0,1,3\}}\\
&&&&&&&&&& && &&*{\bullet}\ar@{-}[rr]_2&& *{\bullet}} 
\end{tabular}\\[8pt]
\hline
\begin{tabular}{cc}
(D1)&  \xymatrix@-1.9pc{*{\bullet} \ar@{=}[rr]^{\{0,2\}} && *{\bullet}\ar@{-}[rr]^1&&*{\bullet}\ar@{-}[rr]^2&& *{\bullet}\ar@{-}[rr]^1&&*{\bullet}\ar@{-}[rr]^0&& *{\bullet}\ar@{-}[rr]^1&& *{\bullet}\ar@{-}[rr]^2&&*{\bullet}\ar@{=}[rr]^{\{1,3\}}&&*{\bullet}\ar@{-}[rr]^2&& *{\bullet}\ar@{-}[rr]^3&&*{\bullet}} \\
(D2)&  \xymatrix@-1.9pc{*{\bullet} \ar@{-}[rr]^2 && *{\bullet}\ar@{-}[rr]^1&&*{\bullet}\ar@{=}[rr]^{\{0,2\}}&& *{\bullet}\ar@{-}[rr]^1&&*{\bullet}\ar@{-}[rr]^0&& *{\bullet}\ar@{-}[rr]^1&& *{\bullet}\ar@{-}[rr]^2&&*{\bullet}\ar@{=}[rr]^{\{1,3\}}&&*{\bullet}\ar@{-}[rr]^2&& *{\bullet}\ar@{-}[rr]^3&&*{\bullet}} \\
\end{tabular}
&
\begin{tabular}{cc}
(D3)&  \xymatrix@-1.9pc{*{\bullet} \ar@{-}[rr]^2 && *{\bullet}\ar@{=}[rr]^{\{1,3\}} &&*{\bullet}\ar@{-}[rr]^2 && *{\bullet}\ar@{-}[rr]^1&&*{\bullet}\ar@{-}[rr]^0&& *{\bullet}\ar@{-}[rr]^1&& *{\bullet}\ar@{=}[rr]^{\{0,2\}}&&*{\bullet}\ar@{-}[rr]^1&&*{\bullet}\ar@{-}[rr]^2&& *{\bullet}\ar@{-}[rr]^3&&*{\bullet}} \\
(D4)&  \xymatrix@-1.9pc{*{\bullet}\ar@{-}[rr]^2 \ar@3{-}[d]_{\{0,1,3\}} && *{\bullet}\ar@{-}[rr]^1 \ar@{-}[d]^{0} &&*{\bullet}\ar@{-}[rr]^0&& *{\bullet}\ar@{-}[rr]^1&& *{\bullet}\ar@{=}[rr]^{\{0,2\}}&&*{\bullet}\ar@{-}[rr]^1&&*{\bullet}\ar@{-}[rr]^2&& *{\bullet}\ar@{-}[rr]^3&&*{\bullet}\\
*{\bullet}\ar@{-}[rr]_2 && *{\bullet}} 
\end{tabular}\\[8pt]
\hline
\begin{tabular}{cc}
   (E1) & \xymatrix@-1.9pc{ && && && && *{\bullet}\ar@3{-}[rr]^{\{0,1,3\}}&& *{\bullet}\\
   &&&&&&&&&&\\
 *{\bullet}\ar@{-}[rr]^0 && *{\bullet}\ar@{-}[rr]^1 && *{\bullet}\ar@{-}[rr]^2 &&*{\bullet}\ar@{-}[rr]^1 && *{\bullet}\ar@{-}[uu]_2 \ar@{-}[rr]_0 && *{\bullet}\ar@{-}[uu]_2\\
    &&&&&&&&&&\\
  && && && *{\bullet}\ar@{-}[rr]_1\ar@{-}[uu]^3 && *{\bullet}\ar@{-}[uu]^3\ar@{=}[rr]_{\{0,2\}} & & *{\bullet}\ar@{-}[uu]_3 \\
   } \\

(E2) & \xymatrix@-1.9pc{  *{\bullet}\ar@{-}[rr]^0 && *{\bullet}\ar@{-}[rr]^1 && *{\bullet}\ar@{-}[rr]^2 && *{\bullet}\ar@{-}[rr]^1 && *{\bullet}\\
  *{\bullet}\ar@{=}[rr]_{\{0,2\}} && *{\bullet}\ar@{-}[rr]_1 && *{\bullet}\ar@{-}[rr]_2 && *{\bullet}\ar@{-}[rr]_1 \ar@{-}[u]^3 && *{\bullet}\ar@{-}[rr]_2 \ar@{-}[u]_3 && *{\bullet}} \\
(E3) & \xymatrix@-1.9pc{  *{\bullet}\ar@{-}[rr]^0 && *{\bullet}\ar@{-}[rr]^1 && *{\bullet}\ar@{-}[rr]^2 && *{\bullet}\ar@{-}[rr]^1 && *{\bullet}\ar@{-}[rr]^2 && *{\bullet}\\
  *{\bullet}\ar@{=}[rr]_{\{0,2\}} && *{\bullet}\ar@{-}[rr]_1 && *{\bullet}\ar@{-}[rr]_2 && *{\bullet}\ar@{-}[rr]_1 \ar@{-}[u]^3 && *{\bullet} \ar@{-}[u]_3 } \\
  (E4) & \xymatrix@-1.9pc{  *{\bullet}\ar@{-}[rr]^0 && *{\bullet}\ar@{-}[rr]^1 && *{\bullet}\ar@{=}[rr]^{\{0,2\}} && *{\bullet}\ar@{-}[rr]^1 && *{\bullet}\ar@{-}[rr]^2 && *{\bullet}\ar@{-}[rr]^1 && *{\bullet}\ar@{-}[rr]^2 && *{\bullet}\\ 
  &&  && && && *{\bullet}\ar@{-}[rr]_2 && *{\bullet}\ar@{-}[rr]_1 \ar@{-}[u]^3 && *{\bullet} \ar@{-}[u]_3  }  \\ 
    (E5) & \xymatrix@-1.9pc{  *{\bullet}\ar@{-}[rr]^0 && *{\bullet}\ar@{-}[rr]^1 && *{\bullet}\ar@{=}[rr]^{\{0,2\}} && *{\bullet}\ar@{-}[rr]^1 && *{\bullet}\ar@{-}[rr]^2 && *{\bullet}\ar@{-}[rr]^1 && *{\bullet} \\
  &&  && && && *{\bullet}\ar@{-}[rr]_2 && *{\bullet}\ar@{-}[rr]_1 \ar@{-}[u]^3 && *{\bullet} \ar@{-}[u]_3  \ar@{-}[rr]_2 && *{\bullet}} \\  
    (E6) & \xymatrix@-1.9pc{  *{\bullet}\ar@{-}[rr]^0 && *{\bullet}\ar@{-}[rr]^1 && *{\bullet}\ar@{=}[rr]^{\{0,2\}} && *{\bullet}\ar@{-}[rr]^1 && *{\bullet}\ar@{-}[rr]^2 && *{\bullet}\ar@{-}[rr]^1 && *{\bullet} \ar@{-}[rr]^2 && *{\bullet}\\
  &&  && && && && *{\bullet}\ar@{-}[rr]_1 \ar@{-}[u]^3 && *{\bullet} \ar@{-}[u]_3  \ar@{-}[rr]_2 && *{\bullet} }  \\  
    \end{tabular}
  &
  \begin{tabular}{cc}
  (E7) & \xymatrix@-1.9pc{  *{\bullet}\ar@{-}[rr]^0 && *{\bullet}\ar@{-}[rr]^1 && *{\bullet}\ar@{-}[rr]^2 && *{\bullet}\ar@{-}[rr]^1 && *{\bullet}\ar@{-}[rr]^2 && *{\bullet}\ar@{-}[rr]^1 && *{\bullet}\ar@{=}[rr]_{\{0,2\}} && *{\bullet} \\
  &&  && && *{\bullet}\ar@{-}[rr]_1 \ar@{-}[u]^3 && *{\bullet} \ar@{-}[u]_3  \ar@{-}[rr]_2 && *{\bullet}} \\ 
  (E8) & \xymatrix@-1.9pc{  *{\bullet}\ar@{-}[rr]^0 && *{\bullet}\ar@{-}[rr]^1 && *{\bullet}\ar@{-}[rr]^2 && *{\bullet}\ar@{-}[rr]^1 && *{\bullet}\ar@{-}[rr]^2 && *{\bullet}\ar@{-}[rr]^1 && *{\bullet}\ar@{=}[rr]_{\{0,2\}} && *{\bullet} \\
  &&  &&  *{\bullet}\ar@{-}[rr]_2 && *{\bullet}\ar@{-}[rr]_1 \ar@{-}[u]^3 && *{\bullet} \ar@{-}[u]_3 } \\
  (E9) & \xymatrix@-1.9pc{  *{\bullet}\ar@{-}[rr]^0 && *{\bullet}\ar@{-}[rr]^1 && *{\bullet}\ar@{-}[rr]^2 && *{\bullet}\ar@{-}[rr]^1 && *{\bullet}\ar@{-}[rr]^2 && *{\bullet} \\  
  &&  && && *{\bullet}\ar@{-}[rr]_1 \ar@{-}[u]^3 && *{\bullet} \ar@{-}[u]_3  \ar@{-}[rr]_2 && *{\bullet} \ar@{-}[rr]_1 && *{\bullet} \ar@{=}[rr]_{\{0,2\}}  && *{\bullet}} \\
    (E10) & \xymatrix@-1.9pc{  *{\bullet}\ar@{-}[rr]^0 && *{\bullet}\ar@{-}[rr]^1 && *{\bullet}\ar@{-}[rr]^2 && *{\bullet}\ar@{-}[rr]^1 && *{\bullet} \\  
  &&  && *{\bullet}\ar@{-}[rr]_2 && *{\bullet}\ar@{-}[rr]_1 \ar@{-}[u]^3 && *{\bullet} \ar@{-}[u]_3  \ar@{-}[rr]_2 && *{\bullet} \ar@{-}[rr]_1 && *{\bullet} \ar@{=}[rr]_{\{0,2\}}  && *{\bullet}} \\
  (E11)  &  \xymatrix@-1.9pc{*{\bullet}\ar@{-}[rr]^0 && *{\bullet}\ar@{-}[rr]^1 && *{\bullet}\ar@{=}[rr]^{\{0,2\}} && *{\bullet}\ar@{-}[rr]^1 && *{\bullet}\ar@{-}[rr]^2 && *{\bullet}\ar@{-}[rr]^3 && *{\bullet}\ar@{-}[rr]^4 && *{\bullet}\ar@{-}[rr]^3 &&  *{\bullet}\\
  && && && && && && *{\bullet}\ar@{-}[rr]_4 \ar@{-}[u]^2  && *{\bullet}\ar@{-}[u]_2  } \\
  (E12)  & \xymatrix@-1.9pc{*{\bullet}\ar@{-}[rr]^0 && *{\bullet}\ar@{-}[rr]^1 && *{\bullet}\ar@{=}[rr]^{\{0,2\}} && *{\bullet}\ar@{-}[rr]^1 && *{\bullet}\ar@{-}[rr]^2 && *{\bullet}\ar@{-}[rr]^3 && *{\bullet}\ar@{-}[rr]^4 && *{\bullet}\\
  && && && && && *{\bullet}\ar@{-}[rr]_3 && *{\bullet}\ar@{-}[rr]_4 \ar@{-}[u]^2  && *{\bullet}\ar@{-}[u]_2  } \\
  (E13) &  \xymatrix@-1.9pc{*{\bullet}\ar@{-}[rr]^0 && *{\bullet}\ar@{-}[rr]^1 && *{\bullet}\ar@{=}[rr]^{\{0,2\}} && *{\bullet}\ar@{-}[rr]^1 && *{\bullet}\ar@{-}[rr]^2 && *{\bullet}\ar@{-}[rr]^3 && *{\bullet}\ar@{-}[rr]^4 && *{\bullet}\\
  && && && && && && *{\bullet}\ar@{-}[rr]_4 \ar@{-}[u]^2  && *{\bullet}\ar@{-}[u]_2 \ar@{-}[rr]_3 &&  *{\bullet}}  \\
  \end{tabular}\\[8pt]
 \hline
\begin{tabular}{cc}
(F1) &  \xymatrix@-1.9pc{  &&*{\bullet} \ar@{-}[rr]^2 \ar@{-}[d]^0 && *{\bullet}  \ar@{-}[d]^0 && &&  *{\bullet} \ar@{-}[rr]^1 \ar@{-}[d]^3 && *{\bullet} \ar@{-}[d]^3 && \\
 *{\bullet} \ar@{-}[rr]_1 && *{\bullet}\ar@{-}[rr]_2 && *{\bullet}\ar@{-}[rr]_1 && *{\bullet}\ar@{-}[rr]_2 && *{\bullet}\ar@{-}[rr]_1 && *{\bullet}\ar@{-}[rr]_2 && *{\bullet}}  \\

(F2) &  \xymatrix@-1.9pc{  &&*{\bullet} \ar@{-}[rr]^2 \ar@{-}[d]^0 && *{\bullet}  \ar@{-}[d]^0 && &&  *{\bullet} \ar@{-}[rr]^1 \ar@{-}[d]^3 && *{\bullet} \ar@{-}[d]^3 \ar@{-}[rr]^2 && *{\bullet}\\
 *{\bullet} \ar@{-}[rr]_1 && *{\bullet}\ar@{-}[rr]_2 && *{\bullet}\ar@{-}[rr]_1 && *{\bullet}\ar@{-}[rr]_2 && *{\bullet}\ar@{-}[rr]_1 && *{\bullet}&& }  \\

(F3) &  \xymatrix@-1.9pc{  &&*{\bullet} \ar@{-}[rr]^2 \ar@{-}[d]^0 && *{\bullet}  \ar@{-}[d]^0 && &&  *{\bullet} \ar@{-}[rr]^3 \ar@{-}[d]^1 && *{\bullet} \ar@{-}[d]^1 && \\
 *{\bullet} \ar@{-}[rr]_1 && *{\bullet}\ar@{-}[rr]_2 && *{\bullet}\ar@{-}[rr]_1 && *{\bullet}\ar@{-}[rr]_2 && *{\bullet}\ar@{-}[rr]_3 && *{\bullet}\ar@{-}[rr]_2 && *{\bullet}}  \\

(F4) &  \xymatrix@-1.9pc{  *{\bullet} \ar@{-}[rr]^1 &&*{\bullet} \ar@{-}[rr]^2 \ar@{-}[d]^0 && *{\bullet}  \ar@{-}[d]^0 && &&  *{\bullet} \ar@{-}[rr]^1 \ar@{-}[d]^3 && *{\bullet} \ar@{-}[d]^3 \ar@{-}[rr]^2 && *{\bullet}\\
 && *{\bullet}\ar@{-}[rr]_2 && *{\bullet}\ar@{-}[rr]_1 && *{\bullet}\ar@{-}[rr]_2 && *{\bullet}\ar@{-}[rr]_1 && *{\bullet}&& }  \\

(F5) &  \xymatrix@-1.9pc{  *{\bullet} \ar@{-}[rr]^1 &&*{\bullet} \ar@{-}[rr]^2 \ar@{-}[d]^0 && *{\bullet}  \ar@{-}[d]^0 && &&  *{\bullet} \ar@{-}[rr]^3 \ar@{-}[d]^1 && *{\bullet} \ar@{-}[d]^1 && \\
 && *{\bullet}\ar@{-}[rr]_2 && *{\bullet}\ar@{-}[rr]_1 && *{\bullet}\ar@{-}[rr]_2 && *{\bullet}\ar@{-}[rr]_3 && *{\bullet}\ar@{-}[rr]_2 && *{\bullet}} \\
\end{tabular}
&
\begin{tabular}{cc}
(F6) &  \xymatrix@-1.9pc{  &&*{\bullet} \ar@{-}[rr]^0 \ar@{-}[d]^2 && *{\bullet}  \ar@{-}[d]^2 && &&  *{\bullet} \ar@{-}[rr]^3 \ar@{-}[d]^1 && *{\bullet} \ar@{-}[d]^1 && \\
 *{\bullet} \ar@{-}[rr]_1 && *{\bullet}\ar@{-}[rr]_0 && *{\bullet}\ar@{-}[rr]_1 && *{\bullet}\ar@{-}[rr]_2 && *{\bullet}\ar@{-}[rr]_3 && *{\bullet}\ar@{-}[rr]_2 && *{\bullet}} \\
 
(F7) &  \xymatrix@-1.9pc{  *{\bullet} \ar@{-}[rr]^2 \ar@{-}[d]^0 && *{\bullet}  \ar@{-}[d]^0 && &&  *{\bullet} \ar@{-}[rr]^1 \ar@{-}[d]^3 && *{\bullet} \ar@{-}[d]^3 && && \\
 *{\bullet}\ar@{-}[rr]_2 && *{\bullet}\ar@{-}[rr]_1 && *{\bullet}\ar@{-}[rr]_2 && *{\bullet}\ar@{-}[rr]_1 && *{\bullet}\ar@{-}[rr]_2 && *{\bullet}\ar@{-}[rr]_1 && *{\bullet}}  \\

(F8) &  \xymatrix@-1.9pc{  *{\bullet} \ar@{-}[rr]^2 \ar@{-}[d]^0 && *{\bullet}  \ar@{-}[d]^0 && &&  *{\bullet} \ar@{-}[rr]^1 \ar@{-}[d]^3 && *{\bullet} \ar@{-}[d]^3 \ar@{-}[rr]^2 && *{\bullet} \ar@{-}[rr]_1 && *{\bullet}\\
 *{\bullet}\ar@{-}[rr]_2 && *{\bullet}\ar@{-}[rr]_1 && *{\bullet}\ar@{-}[rr]_2 && *{\bullet}\ar@{-}[rr]_1 && *{\bullet}&& && }  \\

(F9) & \xymatrix@-1.9pc{  *{\bullet} \ar@{-}[rr]^2 \ar@{-}[d]^0 && *{\bullet}  \ar@{-}[d]^0 && &&  *{\bullet} \ar@{-}[rr]^3 \ar@{-}[d]^1 && *{\bullet} \ar@{-}[d]^1 && && \\
 *{\bullet}\ar@{-}[rr]_2 && *{\bullet}\ar@{-}[rr]_1 && *{\bullet}\ar@{-}[rr]_2 && *{\bullet}\ar@{-}[rr]_3 && *{\bullet}\ar@{-}[rr]_2 && *{\bullet}\ar@{-}[rr]_1 && *{\bullet}}  \\

(F10) &  \xymatrix@-1.9pc{  *{\bullet} \ar@{-}[rr]^2 \ar@{-}[d]^0 && *{\bullet}  \ar@{-}[d]^0 && && && &&  *{\bullet} \ar@{-}[rr]^1 \ar@{-}[d]_3 && *{\bullet} \ar@{-}[d]_3 \\
 *{\bullet}\ar@{-}[rr]_2 && *{\bullet}\ar@{-}[rr]_1 && *{\bullet}\ar@{-}[rr]_2 && *{\bullet}\ar@{-}[rr]_1 && *{\bullet}\ar@{-}[rr]_2 && *{\bullet}\ar@{-}[rr]_1 && *{\bullet}}  \\ 
 \end{tabular}\\
\hline
\end{array} \]

\end{tiny}

\end{document}